\newtheorem{thm}{Theorem}[section]
\newtheorem{con}[thm]{Conjecture}
\newtheorem{lem}[thm]{Lemma}
\newtheorem{prop}[thm]{Proposition}
\numberwithin{equation}{section}
\newcommand{\bQ}{{\mathbb{Q}}}
\newcommand{\bR}{{\mathbb{R}}}
\newcommand{\bK}{{\mathbb{K}}}
\newcommand{\bL}{{\mathbb{L}}}
  \newcommand{\C}{{\mathcal{C}}}
  \newcommand{\E}{{\mathcal{E}}}
  \newcommand{\F}{{S}}
  \newcommand{\M}{{\mathcal{M}}}
\renewcommand{\S}{{\mathcal{S}}}
  \newcommand{\W}{{\mathcal{W}}}
  \newcommand{\Y}{{\mathcal{Y}}}
\newcommand{\rank}{\operatorname{rank}}
\newcommand{\coker}{\operatorname{coker}}
\newcommand{\td}{\mathrm{td}\,}
\begin{document}

\title[Stress matrices and global rigidity of frameworks on surfaces]{Stress matrices and global rigidity of frameworks on surfaces}
\author[B. Jackson]{B. Jackson}
\address{School of Mathematical Sciences\\ Queen Mary, University of London\\
E1 4NS \\ U.K. }
\email{b.jackson@qmul.ac.uk}
\author[A. Nixon]{A. Nixon}
\address{Department of Mathematics and Statistics\\ Lancaster University\\
LA1 4YF \\ U.K. }
\email{a.nixon@lancaster.ac.uk}
\date{\today}

\begin{abstract}
In 2005, Bob Connelly showed that a generic framework in $\bR^d$ is
globally rigid if it has a stress matrix of maximum possible rank,
and that this sufficient condition for generic global rigidity is
preserved by the 1-extension operation. His results gave a key step
in the characterisation of generic global rigidity in the plane. We
extend these results to frameworks on surfaces in $\bR^3$. For a
framework on a family of concentric cylinders, cones or ellipsoids,
we show that there is a natural surface stress matrix arising from
assigning edge and vertex weights to the framework, in equilibrium
at each vertex.  In the case of cylinders and ellipsoids,
we show that  having a maximum rank stress matrix is
sufficient to guarantee generic global rigidity on the surface. We
then show that this sufficient condition for generic global rigidity is preserved under 1-extension and use this to
make progress on the problem of characterising generic global
rigidity on the cylinder. 
\end{abstract}

\maketitle

\section{Introduction}

A bar-joint framework $(G,p)$ in $d$-dimensional Euclidean space $\bR^d$ is a realisation of a (simple) graph $G=(V,E)$, via a map $p:V\to \bR^d$, with vertices considered as universal joints and edges as fixed length bars. A framework $(G,p)$ is \emph{rigid} if the only continuous motions of the vertices in $\bR^d$ that preserve
the edge lengths, arise from isometries of $\bR^d$.
More strongly, $(G,p)$ is \emph{globally rigid} in $\bR^d$ if every realisation $(G,q)$ with the same edge lengths as $(G,p)$ arises from an isometry of $\bR^d$. We refer the reader to  \cite{Wlong} for more information on rigidity and its applications.

It is NP-hard to determine if an arbitrary framework is rigid \cite{Abb} or globally rigid \cite{Sax}. These problems become more tractable if we restrict to generic frameworks, for which rigidity and global rigidity can be determined in polynomial time when $d=1,2$.
It remains a difficult open problem to characterise, in an efficient combinatorial way, when a $3$-dimensional generic
framework is rigid or globally rigid.

Results have recently been obtained concerning the rigidity and global rigidity of frameworks in $\bR^3$ that are constrained to lie on a fixed surface \cite{JMN,NOP,NOP2}. In this paper we obtain a natural sufficient condition for such a framework to be globally rigid.

We first recall
some fundamental results about the generic (global) rigidity of bar-joint frameworks in Euclidean space.
A graph $G=(V,E)$ is \emph{$(2,k)$-sparse} if for every subgraph
$G'=(V',E')$, with at least one edge, the inequality $|E'|\leq
2|V'|-k$ holds. Moreover $G$ is \emph{$(2,k)$-tight} if $|E|=2|V|-k$
and $G$ is $(2,k)$-sparse. While the definitions of {$(2,k)$-sparse}
and {$(2,k)$-tight} make sense for graphs with multiple edges and
loops, such edges are of no use when considering rigidity so we will
restrict our attention to simple graphs.

\begin{thm}[\cite{Lam}]\label{thm:Laman}
A generic framework $(G,p)$ in $\bR^2$ is
rigid if and only if $G$ contains a spanning $(2,3)$-tight subgraph.
\end{thm}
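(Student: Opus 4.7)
The plan is to pass to \emph{infinitesimal} rigidity, which for generic frameworks coincides with rigidity: $(G,p)$ is generically rigid in $\bR^2$ if and only if the rigidity matrix $R(G,p)$, whose rows are indexed by edges and columns by coordinates of vertices, attains its maximum possible rank $2|V|-3$ at a generic $p$. In particular it suffices to exhibit one framework $(G,p^*)$ for which $\operatorname{rank} R(G,p^*)=2|V|-3$, and for necessity, to extract a row-independent subset of size $2|V|-3$ from $R(G,p)$ and identify the corresponding edge set with a $(2,3)$-tight spanning subgraph.

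For necessity I would use Maxwell's count: if $G'=(V',E')$ is any subgraph, then the submatrix of $R(G,p)$ indexed by $E'$ has $|E'|$ rows but, after restricting to the coordinates of $V'$, its rank is at most $2|V'|-3$ (since the trivial infinitesimal motions already contribute a kernel of dimension $3$). Hence the set of edges whose corresponding rows are linearly independent in $R(G,p)$ forms a $(2,3)$-sparse subgraph. A generically rigid graph has a spanning row-independent edge set of size $2|V|-3$, which is therefore a spanning $(2,3)$-tight subgraph of $G$.

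For sufficiency I would use the Henneberg inductive construction of $(2,3)$-tight graphs: every such graph can be obtained from $K_2$ by a sequence of \emph{$0$-extensions} (add a new vertex joined to two existing vertices) and \emph{$1$-extensions} (delete an edge $uv$, add a new vertex $w$, and add edges $wu$, $wv$, $wx$ for some third vertex $x$). The $0$-extension case is routine: at any generic placement of the new vertex the two new rows of the rigidity matrix are linearly independent of the old ones, so rank grows by $2$ and the maximum rank is maintained. The $1$-extension step is the heart of the argument: given a generic infinitesimally rigid realisation of the smaller graph, one must show that after performing the $1$-extension there is a placement of the new vertex at which the resulting framework remains infinitesimally rigid.

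The main obstacle is precisely the $1$-extension step. The standard route, which I would follow, is to show that if \emph{every} placement of the new vertex $w$ produced an infinitesimal flex, then by averaging/genericity one could produce an infinitesimal flex of the original graph with $uv$ added back and $wx$ contracted, contradicting the infinitesimal rigidity assumed inductively; equivalently, one picks $w$ on the line through $u$ and $v$ so that the two rows from $wu$ and $wv$ together span the same space as the row from $uv$, and then perturbs generically. Combining the preserved generic infinitesimal rigidity under both extensions with the Henneberg construction of the $(2,3)$-tight spanning subgraph, and noting that adding further edges cannot decrease rank, completes the proof.
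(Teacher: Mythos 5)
The paper does not prove this statement at all: it is quoted as Laman's theorem directly from \cite{Lam}, so there is no in-paper argument to compare against. Your outline is the standard classical proof and is essentially sound: generic rigidity reduces to infinitesimal rigidity (Asimow--Roth), Maxwell's count gives necessity, and a Henneberg induction gives sufficiency; the collinear-placement trick you describe for the $1$-extension step is exactly the device this paper itself uses later, on surfaces, in the proof of Lemma~\ref{lem:1ext}. Two caveats on your sketch. First, the combinatorial crux is hidden in your appeal to ``the Henneberg inductive construction of $(2,3)$-tight graphs'': the nontrivial point is that a $(2,3)$-tight graph with a vertex $w$ of degree $3$ always admits an \emph{admissible} $1$-reduction, i.e.\ one can delete $w$ and add an edge between two of its neighbours while staying $(2,3)$-tight (simple); this needs an argument and is the heart of Laman's theorem, so it should be proved or explicitly cited rather than assumed. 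Second, your first proposed route for the geometric step (``averaging'' a flex and ``contracting $wx$'') is too vague to check, and your phrasing that the rows of $wu$ and $wv$ ``span the same space as'' the row of $uv$ is off -- when $w$ lies on the line through $u$ and $v$ the row of $uv$ lies \emph{in} the span of the rows of $wu$ and $wv$, so deleting $uv$ does not drop the rank, while the new columns gain rank $2$ from $\{wu,wx\}$; combined with the fact that the rank of the rigidity matrix is maximised at generic configurations, this yields the inductive step. With those two points tightened, the proposal is a correct rendering of the standard proof.
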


A framework $(G,p)$ is said to be {\em redundantly rigid} if
$(G-e,p)$ is rigid for all edges $e$ of $G$.

\begin{thm}[\cite{C2005,J&J}]\label{thm:2dcomb}
A generic framework  $(G,p)$ in $\bR^2$ is globally rigid if and only if $G$ is a complete graph on at most three vertices or $(G,p)$ is redundantly rigid and $G$ is $3$-connected.
\end{thm}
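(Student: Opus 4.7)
The plan is to split the statement into its necessity and sufficiency directions. For the necessity, assume $(G,p)$ is globally rigid with $|V(G)|\geq 4$. Global rigidity implies rigidity, so I want to verify that $G$ must additionally be $3$-connected and redundantly rigid (Hendrickson's necessary conditions). If $G$ had a $2$-separator $\{u,v\}$, I would partition $G$ into two subgraphs sharing only $\{u,v\}$ and reflect the realization of one subgraph across the line through $p(u)$ and $p(v)$; this yields an equivalent non-congruent framework, contradicting global rigidity. If some edge $e=xy$ of $G$ were not redundant, then $(G-e,p)$ would be flexible, and I would follow a nontrivial flex, monitoring the continuous function $\|p(x)-p(y)\|^2$. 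Since the flex does not arise from an isometry, I would argue (using genericity to rule out the squared distance being constant along the flex) that we can continue the flex through a point where $\|p(x)-p(y)\|^2$ returns to its original value, again contradicting global rigidity.

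For the sufficiency direction, I would follow Connelly's stress-matrix strategy. The three ingredients are: (i) Connelly's theorem stating that a generic framework $(G,p)$ in $\bR^d$ possessing an equilibrium stress $\omega$ whose stress matrix has maximum rank $|V|-d-1$ is globally rigid; (ii) a combinatorial recursive construction showing that every $3$-connected redundantly rigid graph on at least four vertices can be built from $K_4$ by a sequence of $1$-extensions and edge-additions, with both $3$-connectivity and redundant rigidity preserved at every step; and (iii) Connelly's theorem that the property of admitting a generic realisation with a maximum-rank stress matrix is preserved under $1$-extension.

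With these in hand, the induction is straightforward. The base case is $K_4$ in $\bR^2$, for which a direct computation exhibits a generic realisation whose stress matrix has rank $|V|-3=1$. For the inductive step, adding an edge trivially preserves global rigidity (any equivalent framework for the larger graph is equivalent for the smaller graph, hence congruent), while a $1$-extension preserves the existence of a generic realisation with a maximum-rank stress matrix by (iii), and hence preserves generic global rigidity by (i). Applying this along the construction sequence from (ii) shows that every $3$-connected redundantly rigid graph has a generically globally rigid realisation; a separate genericity argument (global rigidity being a generic property of the graph) then upgrades this to all generic realisations.

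The main obstacle is the combinatorial recursion (ii): one must show that any $3$-connected redundantly rigid graph $G$ distinct from $K_4$ admits either a non-essential edge whose deletion retains $3$-connectivity and redundant rigidity, or a vertex at which the inverse $1$-reduction can be carried out without losing those properties. This requires careful analysis of the $(2,3)$-tight matroid, of critical subgraphs achieving equality in the sparsity count, and of how $2$-separations interact with edge-deletion and vertex-splitting operations; this is precisely the technical content of the Jackson--Jordán argument. Ingredient (iii), the stress-matrix extension across a $1$-extension, is also non-trivial, requiring a genericity argument to ensure that the rank of the extended stress matrix does not drop.
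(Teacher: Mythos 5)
Your outline matches the proof this paper relies on: the theorem is quoted from Connelly \cite{C2005} and Jackson--Jord\'an \cite{J&J}, and their argument is exactly your decomposition --- Hendrickson-style necessity (reflection across a $2$-separator, and the flex/compactness argument for redundant rigidity), plus sufficiency via Connelly's maximum-rank stress matrix condition, its preservation under $1$-extension, and the Jackson--Jord\'an recursive construction of $3$-connected redundantly rigid graphs from $K_4$ by $1$-extensions and edge additions. This is the same route the present paper emulates for surfaces (compare its Sections 5--6), so your proposal is correct in approach, with the acknowledged heavy lifting residing in the combinatorial recursion of \cite{J&J}.
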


Hendrickson \cite{Hdk} had previously shown that $(d+1)$-connectivity and redundant rigidity are necessary conditions for generic global rigidity in $\bR^d$
for all $d$. Examples constructed by Connelly \cite{C91} show that they are not sufficient to imply generic global rigidity when $d\geq 3$.
Connelly also obtained a different kind of sufficient condition for generic global rigidity in terms of `stress matrices' (which will be defined in Section \ref{sec:stress}).

\begin{thm}[\cite{C2005}]\label{thm:Consufficiency}
Let $(G,p)$ be a generic framework in $\bR^d$ with $n\geq d+2$ vertices. Suppose that $(G,p)$ has an equilibrium stress $\omega$ whose associated stress matrix $\Omega$ has
rank $n-d-1$. Then $(G,p)$ is globally rigid in $\bR^d$.
\end{thm}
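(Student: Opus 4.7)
The plan is to use the stress $\omega$ to define a quadratic ``energy'' function on configurations whose critical points carve out the affine images of $p$, and then to exploit genericity to show that any framework with the same edge lengths must be such a critical point, and in fact an isometric image of $p$.

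Concretely, I would define $E_\omega(x)=\sum_{ij\in E}\omega_{ij}\|x_i-x_j\|^2$, which can be written as $\mathrm{tr}(X^T\Omega X)$ where $X$ is the $n\times d$ matrix with row $i$ equal to $x_i^T$. This function has two key features. First, because it is a linear combination of squared edge-length functions, it is constant on each fibre of the edge map $f\colon\bR^{dn}\to\bR^{|E|}$, $f_{ij}(x)=\|x_i-x_j\|^2$; in particular $E_\omega(q)=E_\omega(p)$ for any $q$ with the same edge lengths as $p$. Second, the critical set of $E_\omega$ in $\bR^{dn}$ is exactly $\{X : \Omega X=0\}$, i.e.\ the set of configurations whose coordinate columns lie in $\ker\Omega$. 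Since $\Omega\mathbf{1}=0$ automatically and $\Omega P=0$ is precisely the equilibrium condition for $\omega$ at $p$, and since $\dim\ker\Omega=d+1$ by the rank hypothesis, the genericity of $p$ (hence affine independence of its vertex positions) forces $\ker\Omega=\mathrm{span}(\mathbf{1},P_1,\ldots,P_d)$. Thus the critical set of $E_\omega$ is precisely the $(d+1)d$-dimensional family of affine images $q_i=Ap_i+b$ of $p$.

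The main step is to show that any $q$ with $f(q)=f(p)$ is itself a critical point of $E_\omega$, i.e.\ that $\Omega Q=0$. This does \emph{not} follow from $E_\omega(q)=E_\omega(p)$ alone; it is where genericity does real work. I would argue, via a Zariski-closure argument on the algebraic variety of configurations with prescribed edge lengths, that the space of equilibrium stresses is an algebraic invariant of the fibre $f^{-1}(f(p))$ and hence is common to $p$ and $q$ for generic $p$. Once $\omega$ is known to be an equilibrium stress at $q$ as well, $\Omega Q=0$ follows, so $q$ is an affine image of $p$: $q_i=Ap_i+b$ for some $d\times d$ matrix $A$ and some $b\in\bR^d$.

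Finally, I would use genericity of $p$ to rule out non-orthogonal $A$. The condition $\|q_i-q_j\|^2=\|p_i-p_j\|^2$ on each edge translates to $(p_i-p_j)^T(A^TA-I)(p_i-p_j)=0$. For generic $p$ with $n\ge d+2$, the existence of a stress of rank $n-d-1$ forces $G$ to have enough edges that the vectors $p_i-p_j$ are generic and span the space of symmetric bilinear forms through the map $v\mapsto vv^T$; this forces $A^TA=I$. Hence $q$ is congruent to $p$, proving global rigidity. I expect the middle step to be the main obstacle: transferring $\omega$ from a stress at $p$ to a stress at $q$ (equivalently, showing that the fibre of $f$ over $f(p)$ lies inside the critical set of $E_\omega$) is not formal, and is exactly the point at which one must invoke a substantive algebro-geometric argument about the measurement variety $f(\bR^{dn})$.
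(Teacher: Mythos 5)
Your outline has the same three-part architecture as Connelly's proof (which this paper does not reprove for $\bR^d$ — Theorem \ref{thm:Consufficiency} is quoted from \cite{C2005} — but whose method is adapted to surfaces in Theorem \ref{thm:global}): transfer $\omega$ to every equivalent configuration $q$, use $\rank\Omega=n-d-1$ to get $\ker\Omega=\mathrm{span}(\mathbf{1},P_1,\dots,P_d)$ and hence that $q$ is an affine image of $p$, then rule out non-isometric affine maps. However, the middle step is a genuine gap, not a deferred detail, and you acknowledge as much. Asserting that ``the space of equilibrium stresses is an algebraic invariant of the fibre'' is both unproved and not quite the right statement: $q$ need not be generic and may have strictly more stresses than $p$; what is true is only the containment that every stress of the generic $p$ is a stress of $q$. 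The actual mechanism (Connelly's Proposition 3.3, reproduced here as Proposition \ref{prop:Bobs3.3} and used in the proof of Theorem \ref{thm:global}) is: genericity of $p$ makes $f(p)$ a regular value of the measurement map, so there are neighbourhoods $N_q,N_p$ and a diffeomorphism $g:N_q\to N_p$ with $g(q)=p$ and $f\circ g=f$; differentiating at $q$ gives $R(G,q)=R(G,p)\,dg_q$, so the cokernel vector $\omega$ of $R(G,p)$ annihilates $R(G,q)$, i.e.\ $\Omega Q=0$. Without this, your energy function does no work: $E_\omega(q)=E_\omega(p)$ yields criticality only when $\Omega$ is positive semi-definite (that is the route of Theorem \ref{thm:psdequiv}, and positive semi-definiteness is not among the hypotheses here).

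The final step also rests on an unproved lemma. You claim that a stress of rank $n-d-1$ ``forces $G$ to have enough edges'' so that, by genericity, the matrices $(p_i-p_j)(p_i-p_j)^T$ span all symmetric forms, whence $A^TA=I$. The conclusion is correct, but edge count plus genericity is not a justification: the required statement is exactly that the edge directions do not lie on a conic at infinity, and in \cite{C2005} this is a separate proposition derived from the stress-matrix rank itself (directions on a conic at infinity produce additional vectors in $\ker\Omega$, forcing $\rank\Omega<n-d-1$); it is also where $n\geq d+2$ and the full affine span of a generic $p$ are used. So as written your proposal leaves two steps unestablished, and the first of them — transferring the stress along the fibre — is the heart of the theorem.
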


Gortler, Healy and Thurston \cite{GHT} have shown that Connelly's sufficient condition for generic global rigidity is also a necessary condition. This characterization implies that generic global rigidity depends only on the underlying graph (but does not give rise to a polynomial algorithm for deciding which graphs are generically globally rigid in $\bR^d$).

In this paper we develop natural analogues of an equilibrium  stress and a stress
matrix for frameworks constrained to lie on a surface. Our main result is an analogue of Theorem \ref{thm:Consufficiency}, giving a sufficient condition for generic frameworks on families of concentric cylinders 
and ellipsoids to be globally rigid.

We conclude the introduction by giving a short outline of what follows.
Section \ref{sec:surf} recalls basic definitions and results for frameworks on surfaces.
We describe the rigidity map and rigidity matrix for surfaces in Section \ref{sec:gen}. In Section \ref{sec:stress} we develop basic properties of stresses, stress matrices and energy functions for frameworks on surfaces.
Section \ref{sec:main} contains our main result, Theorem \ref{thm:global}, an analogue of Theorem \ref{thm:Consufficiency} for generic frameworks on cylinders and ellipsoids. We use this result in Section \ref{sec:1ext} to show that the property of having a maximum rank surface stress matrix is preserved by 1-extensions on these surfaces. We finish by applying our results to make some progress on the problem of characterising generic global rigidity on the cylinder.

\section{Frameworks on Surfaces}\label{sec:surf}

It was shown in \cite{NOP} that the rigidity of a generic framework on a surface depends crucially  on the number of continuous isometries of $\bR^3$ admitted by the surface, see Theorems \ref{thm:cylinderlaman}, \ref{thm:conelaman} and \ref{thm:surfacerank} below. Since generic rigidity and global rigidity on the plane and sphere \cite{C&W}, the surfaces with 3-dimensional isometry groups, are now well understood,  we consider cylinders, cones and ellipsoids as natural examples of surfaces with $2$, $1$ and $0$-dimensional isometry groups, respectively.

Let $r=(r_1,r_2,\dots, r_n)$ be a vector of (not necessarily distinct) positive real numbers. For $1\leq i \leq n$, let
$\Y_i=\{(x,y,z) \in \bR^3:x^2+y^2=r_i\}$, $\C_i=\{(x,y,z) \in \bR^3:x^2+y^2=r_iz^2\}$ and $\E_i=\{(x,y,z) \in \bR^3:x^2+\alpha y^2+\beta z^2=r_i\}$ for some fixed $\alpha,\beta \in \bQ$ with $1 < \alpha < \beta$. Let
$\Y=\bigcup_{i=1}^n\Y_i$, $\C=\bigcup_{i=1}^n\C_i$ and $\E=\bigcup_{i=1}^n\E_i$.


We will use $\F=\bigcup_{i=1}^n \F_i$ to denote one of the three surfaces $\Y,\C,\E$,  and $\ell$ for the dimension of its space of infinitesimal isometries (so $\ell=2,1$ or $0$ when $\F=\Y,\C$ or $\E$, respectively). We will occasionally use $\F(r)$ when we wish to specify a particular vector $r$ and $\F(1)$ for the special case when $r_1=r_2=\dots = r_n$ (there is no loss in generality  in assuming that $r_i=1$ for all $1\leq i\leq n$ when the $r_i$ are all equal).

A {\em framework on $\F$} is a pair $(G,p)$ where $G=(V,E)$ is a
graph with $V=\{v_1,v_2,\ldots,v_n\}$, and $p:V\to \bR^3$ with
$p(v_i)\in \F_i$ for all $1\leq i\leq n$. Two frameworks $(G,p)$ and
$(G,q)$ on $\F$ are \emph{equivalent}  if
$\|p(v_i)-p(v_j)\|=\|q(v_i)-q(v_j)\|$ for all $v_iv_j \in E$ and
\emph{congruent}  if $\|p(v_i)-p(v_j)\|=\|q(v_i)-q(v_j)\|$ for all
$v_i,v_j \in V$. The framework $(G,p)$ is \emph{globally rigid on $\F$} if every
framework $(G,q)$ on $\F$ which is equivalent to $(G,p)$ is
congruent to $(G,p)$.
We say that $(G,p)$ is \emph{rigid on $\F$} if
there exists an $\epsilon>0$ such that every framework $(G,q)$ on
$\F$ which is equivalent to $(G,p)$, and has
$\|p(v_i)-q(v_i)\|<\epsilon$ for all $1\leq i\leq n$, is congruent
to $(G,p)$. (This is equivalent to saying that every continuous
motion of the vertices that stays on $\F$ and preserves equivalence
also preserves congruence). If $(G,p)$ is not rigid on $\F$ then
$(G,p)$ is said to be \emph{flexible on $\F$}. The framework $(G,p)$
is \emph{minimally rigid on $\F$} if it is rigid and, for every edge $e\in
E$,  $(G-e,p)$ is flexible on $\F$. It is \emph{redundantly rigid on
$\F$} if $(G-e,p)$ is rigid on $\F$  for all $e\in E$.

An \textit{infinitesimal flex} $s$ of $(G, p)$ on $\F$ is a map
$s:V\to \bR^3$ such that $s(v)$ is tangential to $\F$ at $p(v)$ for
all $v\in V$ and $(p(u)-p(v))\cdot(s(u)-s(v))=0$ for all $uv\in E$.
The framework $(G,p)$ is \emph{infinitesimally rigid} on $\F$ if
every infinitesimal flex $s$ of $(G,p)$ satisfies
$(p(u)-p(v))\cdot(s(u)-s(v))=0$ for all $u,v\in V$.

We consider a framework $(G,p)$ on $\F=\F(r)$ to be \emph{generic} if $\td[\bQ(r,p):\bQ(r)]=2|V|$, where $\td[\bQ(r,p):\bQ(r)]$ denotes the transcendence degree of the field extension. Thus  $(G,p)$ is generic on $\F$ if the coordinates of the vertices of $G$ are as algebraically independent as possible.
The following results characterise when a generic framework on $\Y$
or $\C(1)$ is minimally rigid.

\begin{thm}[\cite{NOP}]\label{thm:cylinderlaman}
Let $(G,p)$ be a generic framework on $\Y$. Then $(G,p)$ is
minimally rigid if and only if $G$ is a complete graph on at most
three vertices or $G$ is $(2,2)$-tight.
\end{thm}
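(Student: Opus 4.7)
The plan is to follow the standard Maxwell--Laman scheme, adapted to the two-dimensional isometry group of the cylinder. For necessity, I would first describe the cylinder rigidity matrix $R_\Y(G,p)$ as a $|E|\times 2|V|$ matrix: each vertex contributes two tangential degrees of freedom (axial translation and rotation about the axis of its cylinder), and each edge contributes one row recording the dot product of the edge vector with the difference of the tangential vertex velocities. Because $\Y$ is invariant under axial translation and rotation about the common axis, two independent infinitesimal isometries always lie in the kernel of $R_\Y(G,p)$, giving $\rank R_\Y(G,p)\leq 2|V|-2$. Applying this bound to every subframework forces any rigid graph to be $(2,2)$-sparse, and equality must hold throughout for minimal rigidity; the small complete graphs appear as the usual degenerate exceptions.

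The sufficiency direction I would establish inductively. The first ingredient is a Henneberg-type combinatorial theorem: every $(2,2)$-tight graph on at least four vertices can be reduced to a fixed base graph (such as $K_4$ or two triangles joined at a vertex) by a sequence of inverse $0$-extensions (delete a degree-$2$ vertex) and inverse $1$-extensions (delete a degree-$3$ vertex $v$ and add one edge between two of its neighbours). This is proved by a standard counting argument: in a $(2,2)$-tight graph the average degree is less than $4$, so a vertex of degree $2$ or $3$ exists, and a careful case analysis on the neighbourhood of a degree-$3$ vertex shows that at least one inverse $1$-extension is admissible.

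The second ingredient is that both extensions preserve generic minimal rigidity on $\Y$. The $0$-extension adds two rows and two columns to $R_\Y(G,p)$, and generically the new rows have independent leading entries so the rank increases by exactly two. For the $1$-extension, replacing $uv$ by a new vertex $w$ of degree $3$ joined to $u,v,x$, the argument is the familiar specialisation trick: choose a realisation $(G',p')$ in which $p'(w)$ coincides with $p(u)$ on its cylinder so that the row for $wu$ degenerates, producing a matrix whose nontrivial dependencies come from infinitesimal flexes of $(G-uv,p)$; exploiting $(2,2)$-tightness of $G$ one exhibits a single $p'$ with $\rank R_\Y(G',p') = 2(|V|+1)-2$, and a Zariski-closedness (semicontinuity of rank) argument transfers full rank to generic $p'$ on $\Y$.

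The main obstacle is the $1$-extension step, since on the cylinder the specialisation $p'(w)\to p(u)$ is constrained to lie on the same cylinder $\Y_u$, restricting the space of admissible local perturbations. The tangential rigidity matrix for the degenerate realisation must therefore be analysed carefully, using that one can freely choose on which of the three edges $wu,wv,wx$ to perform the collapse, and that at least one such choice yields the required rank; this is where the algebraic structure of $\Y$ enters in an essential way, and is the step most likely to require the detailed geometric computation alluded to in \cite{NOP}.
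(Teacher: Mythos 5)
Theorem \ref{thm:cylinderlaman} is quoted from \cite{NOP} rather than proved in this paper, so your outline has to be measured against the argument in that reference. Your necessity argument (two independent infinitesimal isometries of $\Y$ in the kernel of the tangential rigidity matrix, hence $\rank\le 2|V|-2$, applied to every subgraph) and your overall Henneberg scheme are the right shape. However, the combinatorial ingredient you rely on is false as stated: it is \emph{not} true that every $(2,2)$-tight simple graph on at least four vertices can be reduced to a base graph using only inverse $0$-extensions and inverse $1$-extensions. Take two copies of $K_4$ glued at a single vertex: this graph has $7$ vertices and $12=2\cdot 7-2$ edges and is $(2,2)$-tight, it has no vertex of degree $2$, and every vertex of degree $3$ has its three neighbours pairwise adjacent, so no inverse $1$-extension can be performed within the class of simple graphs. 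The counting argument you invoke (``average degree less than $4$, so a vertex of degree $2$ or $3$ exists, and a case analysis shows an admissible reduction'') is precisely where the $(2,3)$-tight Laman argument fails to transfer: with the weaker count the neighbourhood of a degree-$3$ vertex can induce a triangle, and candidate edges can also be blocked by tight subgraphs. For this reason the inductive construction in \cite{NOP} requires additional operations beyond the two Henneberg moves (graph-substitution moves such as vertex-to-$K_4$ and edge-to-$K_3$), together with corresponding geometric lemmas showing that each extra move preserves generic infinitesimal rigidity on the cylinder; this is a substantial part of the proof that your outline omits. (The join operations appearing in Conjecture \ref{con:recurse} of the present paper are a symptom of the same phenomenon at the level of redundantly rigid graphs.)

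Two smaller points. The base graph must be $K_4$ (``two triangles joined at a vertex'' has $5$ vertices and $6$ edges, so is not $(2,2)$-tight), and the base case still needs an explicit check that $K_4$ is generically infinitesimally rigid on $\Y$. Your geometric $1$-extension step is essentially the coincident/collinear specialisation also used in Lemma \ref{lem:1ext} here, which is fine in spirit, but you must arrange the degenerate placement to respect the prescribed cylinders, and you must also invoke the Asimow--Roth type equivalence of rigidity and infinitesimal rigidity for generic frameworks on $\Y$ (proved in \cite{NOP}) to pass from a rank statement to minimal rigidity.
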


\begin{thm}[\cite{NOP2}]\label{thm:conelaman}
Let $(G,p)$ be a generic framework on $\C(1)$. Then $(G,p)$ is
minimally rigid if and only if $G$ is  a complete graph on at most
four vertices or $G$ is $(2,1)$-tight.
\end{thm}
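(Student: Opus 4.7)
I would prove the two implications separately. For \emph{necessity}, assume $(G,p)$ is generic and minimally rigid on $\C(1)$ with $n=|V|\geq 5$. Since $\C(1)$ admits the one-parameter family of rotations about its axis, the surface rigidity matrix of $(G,p)$ (with one row per edge and two columns per vertex encoding tangent directions on the cone) has kernel of dimension at least one, so its rank is at most $2n-1$. Rigidity forces the rank to equal $2n-1$, and minimality then forces $|E|=2n-1$ with all rows linearly independent. Applying the same bound to every sub-framework $(G',p|_{V'})$ with $|V'|\geq 2$ yields $|E'|\leq 2|V'|-1$, and hence $G$ is $(2,1)$-tight. For $n\leq 4$ the complete graph $K_n$ exhausts the available edges (there is no $(2,1)$-tight simple graph on fewer than five vertices, since $\binom{n}{2}\geq 2n-1$ first holds at $n=5$), and these are recorded as exceptional base cases.

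For \emph{sufficiency}, I would invoke a Henneberg-style inductive characterisation of $(2,1)$-tight graphs: every such graph should reduce to a small base (the smallest simple $(2,1)$-tight graph is $K_5-e$, with five vertices and nine edges) by a finite sequence of inverse 0-extensions (removing a degree-$2$ vertex) and inverse 1-extensions (removing a degree-$3$ vertex while restoring one edge among its former neighbours), possibly augmented by a vertex-splitting or $X$-replacement move. The geometric content then reduces to three verifications: (i) the base graph $K_5-e$ has a generic minimally rigid realisation on $\C(1)$, which can be checked by evaluating the rank of the rigidity matrix at a sample generic point; (ii) the 0-extension preserves generic minimal rigidity, which is immediate because the new vertex has a $2$-dimensional tangent space on $\C(1)$ and two generic edges supply two independent row constraints; and (iii) the 1-extension (and any other required Henneberg operation) preserves generic minimal rigidity.

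I expect step (iii) to be the main obstacle. The standard Euclidean Tay--Whiteley argument places the new degree-$3$ vertex at a generic point and shows, by exhibiting a single explicit configuration, that the three new rows together with the rigidity matrix of $G-e$ span a space of dimension $2(n+1)-1$; genericity then extends the rank conclusion to a Zariski-open set of placements. On $\C(1)$ the tangent plane varies nontrivially with position and the defining equation $x^2+y^2=z^2$ is degenerate at the apex, so the adaptation requires constructing an explicit configuration on the cone, bounded away from the apex, for which the relevant minor is non-vanishing; an algebraic-independence argument then lifts this to all generic configurations. The bookkeeping for any additional combinatorial moves (such as vertex-splitting) requires parallel geometric verifications on $\C(1)$, and this is where the real work of the proof lies.
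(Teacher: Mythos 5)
This theorem is not proved in the paper you were given: it is quoted verbatim from the cited reference [NOP2], so the only comparison available is with that source, whose overall strategy (necessity by a rank/count argument using the one-dimensional isometry group of the cone; sufficiency by an inductive construction of $(2,1)$-tight graphs together with geometric lemmas showing each construction move preserves generic minimal rigidity on $\C(1)$) your outline does follow. Your necessity argument and your treatment of the exceptional complete graphs on at most four vertices are essentially correct.

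The genuine gap is in the sufficiency direction, at exactly the point you flag but do not resolve. A ``Henneberg-style inductive characterisation'' of $(2,1)$-tight graphs by inverse $0$- and $1$-extensions down to $K_5-e$ is false as stated: take two disjoint copies of $K_5-e$ and join them by a single edge between two vertices of degree $4$. The result has $10$ vertices and $19=2\cdot 10-1$ edges and is $(2,1)$-tight, but it has no vertex of degree $2$, and each of its four degree-$3$ vertices has its three neighbours pairwise adjacent, so no inverse $1$-extension can be performed without creating a multiple edge. Hence the recursion cannot proceed from this graph, and the phrase ``possibly augmented by a vertex-splitting or $X$-replacement move'' is doing all the work: one must prove a precise combinatorial construction theorem for $(2,1)$-tight graphs (in the cited literature this requires further operations such as vertex splitting, vertex-to-$K_4$, and join-type moves, together with more than one base graph), and then prove, for \emph{each} of these additional operations, that it preserves generic minimal rigidity on the cone. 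Neither the combinatorial theorem nor these geometric preservation lemmas (which are substantially harder than the $0$-extension case, since the cone's tangent plane varies with position and degenerates at the apex) is supplied, so the proposal is an outline of the known route rather than a proof; note also that the present paper itself only establishes preservation results for $1$-extension (Theorem \ref{thm:1extgeneric}), which is consistent with these extra steps being the real difficulty.
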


 It remains an open problem to characterise generic minimally rigid frameworks on $\E$. (The natural analogue of the above theorems is known to be false.)

The final result of this section gives necessary conditions for generic global rigidity of frameworks on $\F$ which are analogous to Hendrickson's conditions for $\bR^d$.

\begin{thm}[\cite{JMN}]\label{thm:redundant}
Suppose $(G,p)$ is a generic globally rigid framework on $\F$ with $n\geq 7-\ell$ vertices.
Then $(G,p)$ is redundantly rigid on $\F$, and $G$ is k-connected,
where $k=2$ if $\F\in \{\Y,\C\}$ and $k=1$ if $\F=\E$.
\end{thm}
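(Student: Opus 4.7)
The plan is to mimic Hendrickson's classical argument for generic global rigidity in $\bR^d$, with non-trivial isometries of $\F$ playing the role of Euclidean reflections. There are two separate claims to establish, which I take in turn.

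\emph{$k$-connectivity.} Suppose for contradiction that $G$ admits a vertex separator $S$ with $|S|\le k-1$, so that $V\setminus S=A\sqcup B$ with no edge between $A$ and $B$. In each case I would exhibit an isometry $\sigma$ of $\F$ which fixes $p(S)$ pointwise but acts non-trivially on generic points of $\bR^3$. Setting $p'(v)=p(v)$ for $v\in S\cup A$ and $p'(v)=\sigma(p(v))$ for $v\in B$ produces a framework on $\F$ with the same edge-length data as $(G,p)$; genericity prevents $(G,p')$ from being congruent to $(G,p)$, contradicting global rigidity. On $\E$ we have $k=1$, so $S=\emptyset$, and $\sigma$ can be any coordinate reflection, all of which preserve each $\E_i$ since the defining equation involves only squares of the coordinates. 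On $\Y$ or $\C$ we have $k=2$; when $|S|=0$ the reflection $z\mapsto -z$ works, and when $S=\{v\}$ I take $\sigma$ to be reflection through the unique vertical plane containing $p(v)$ and the $z$-axis, which is an isometry of $\Y$ or $\C$ fixing $p(v)$.

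\emph{Redundant rigidity.} Suppose $(G-e,p)$ is not rigid on $\F$ for some $e=v_iv_j$, and consider the real algebraic variety of configurations on $\F$ equivalent to $(G-e,p)$. Modulo the at most $\ell$-dimensional isometry group of $\F$, this variety has positive dimension near the class of $p$; let $M$ be the irreducible component of this quotient through $[p]$ and let $h([q])=\|q(v_i)-q(v_j)\|^2$. If $h$ were constant on $M$ then the flex of $(G-e,p)$ would also preserve $\|p(v_i)-p(v_j)\|$, so $(G,p)$ itself would carry a non-trivial flex; but global rigidity forces rigidity when $n\ge 7-\ell$, since a continuous family of equivalent frameworks along such a flex produces non-congruent ones once the generic isometry-stabiliser is trivial. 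Hence $h$ is non-constant on $M$, its level set $h^{-1}(h(p))$ is a proper subvariety, and semi-algebraic arguments yield a real $[q]\in M$ distinct from $[p]$ with $h(q)=h(p)$. Then $(G,q)$ is equivalent to $(G,p)$ but not congruent to it, the required contradiction.

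\emph{Main obstacle.} The delicate step is the last one: extracting a \emph{real} non-congruent $q$ from the level set $\{h=h(p)\}$. Over $\bC$ a non-constant regular function on an irreducible variety has fibres of pure codimension one, but over $\bR$ one has to check that the real locus of the fibre is strictly larger than the isometry orbit of $p$. This is where the hypothesis $n\ge 7-\ell$ enters: it guarantees that the generic configuration has trivial isometry-stabiliser and that $\dim M$ strictly exceeds $\ell$, so the semi-algebraic image of $h$ contains an interval around $h(p)$ and the real fibre genuinely strictly contains the orbit of $p$. The remainder is a careful translation of the standard Euclidean Hendrickson argument to the surface setting, with the isometry group of $\bR^d$ replaced throughout by that of $\F$.
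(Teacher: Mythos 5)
This theorem is not proved in the paper at all: it is imported from \cite{JMN}, so there is no internal proof to compare against. Your overall strategy --- Hendrickson's argument with the isometry group of $\F$ replacing that of $\bR^d$ --- is indeed the strategy of \cite{JMN}, and your connectivity half is essentially sound: for $\E$ any coordinate reflection separates the components, and for $\Y,\C$ a reflection in the vertical plane through $p(v)$ and the $z$-axis fixes a one-vertex separator while moving the rest of $B$ off its orbit (one should still say a word about why the result is non-congruent, e.g.\ that generically some $A$--$B$ distance changes, which needs $A$ and $B$ nonempty and $n$ not too small).

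The genuine gap is in the redundant-rigidity half, at exactly the step you flag. From ``$h$ is non-constant on the irreducible set $M$'' you cannot conclude that the \emph{real} fibre $h^{-1}(h(p))$ contains a point other than $[p]$: over $\bR$ a non-constant polynomial map can have a singleton fibre ($h(x)=x^2$ at $0$), and your stated justification --- that the image of $h$ contains an interval around $h(p)$ --- gives surjectivity near $h(p)$, not a second preimage of $h(p)$ itself. The local regular-value argument only rescues you when $\dim M\ge 2$ after quotienting by isometries; but in the generic situation $G-e$ loses exactly one row of rank, so the quotient $M$ is one-dimensional and the fibre through $[p]$ is locally just $[p]$. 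Hendrickson (and \cite{JMN}) close this by a global argument: the component of $M$ through $[p]$ is a \emph{compact} $1$-manifold, i.e.\ a circle, on which a smooth function takes any regular value strictly between its max and min at least twice. To run this on $\Y$ or $\C$ you must first establish compactness of the configuration space modulo isometries, which fails if $G-e$ is disconnected --- so you need the $2$-connectivity conclusion (proved first) to guarantee $G-e$ is connected and hence that the configuration is bounded once a vertex is pinned. Without the compactness/circle step, the argument as written does not produce the required non-congruent equivalent realisation.
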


We believe that these necessary conditions for generic global rigidity are also sufficient when $\F\in \{\Y,\C\}$, see \cite[Conjecture $9.1$]{JMN}. One motivation for the current paper is to try to verify this conjecture by using the same proof technique as Theorem \ref{thm:2dcomb}. We will return to this in Section
\ref{sec:cyl}.

\section{The Rigidity Map}
\label{sec:gen}

We assume henceforth that $G=(V,E)$ is a graph with $V=\{v_1,v_2,\ldots,v_n\}$ and $E=\{e_1,e_2,\ldots,e_m\}$.
The {\em rigidity map} $F^G:\bR^{3n}\rightarrow \bR^m$ is defined by $F^G(p)=(\|e_1 \|^2, \dots, \|e_m\|^2)$ where $\|e_i\|^2=\|p(v_j)-p(v_k)\|^2$
when $e_i=v_jv_k$. Its differential at
the point $p$ is the map  $dF^G_p:\bR^{3n}\rightarrow \bR^m$ defined
by $dF^G_p(q)=2R(G,p)\cdot q$ where $R(G,p)$ is
 the $|E|\times 3|V|$ matrix  with rows indexed by $E$ and 3-tuples of columns indexed by $V$ in which, for
$e=v_iv_j\in E$, the submatrices in row $e$ and columns $v_i$ and $v_j$ are $p(v_i)-p(v_j)$ and  $p(v_j)-p(v_i)$, respectively, and
all other entries are zero.
We refer to $R(G,p)$ as the {\em rigidity matrix} for $(G,p)$.

We next define a rigidity map and matrix for a framework $(G,p)$ constrained to lie on our surface $\F$.
Let $\Theta^\F:\bR^{3n}\rightarrow \bR^n$ be the map defined by $\Theta^\F(p)=(h_1(p(v_1)),\dots, h_n(p(v_n)))$ where, for each $1 \leq i\leq n$,
\begin{equation}\label{eqn:surface}
h_i(x,y,z)= \begin{cases}
 x^2+y^2-r_i, & \text{if }\F=\Y(r_1,r_2,\ldots,r_n); \\
 x^2+y^2-r_iz^2, & \text{if }\F=\C(r_1,r_2,\ldots,r_n); \\
  x^2+\alpha y^2+\beta z^2-r_i, & \text{if }\F=\E(\alpha,\beta,r_1,r_2,\ldots,r_n).
\end{cases}
\end{equation}
Then the differential of $\Theta^\F$ at the
point $p$ is the map  $d\Theta^\F_p:\bR^{3n}\rightarrow \bR^n$ defined by
$d\Theta^\F_p(q)=2S(G,p)\cdot q$ where
\[ S(G,p) = \begin{bmatrix} s_1 & 0 & \dots  &0  \\  0 & s_2 & \dots &0  \\ \vdots && \ddots & \vdots\\ 0 & 0 & \dots & s_n \end{bmatrix},
\]
$s_i=s_i(p(v_i))$
and
\begin{equation}\label{eq:si}
s_i(x,y,z)= \begin{cases}
 (x,y,0), & \text{if } \F=\Y; \\
 (x,y,-r_iz), & \text{if }\F=\C;\\
 (x,\alpha y,\beta z), & \text{if }\F=\E.
\end{cases}
\end{equation}
It follows that $\rank d\Theta^\F_p=n$ if $p\in \W=\F_1\times\F_2\times\ldots\times\F_n$ and $p(v_i)\neq (0,0,0)$ for all $1\leq i \leq n$. Hence $p\in \W$ is a regular point of $\Theta^\F$ unless $\F=\C$ and $p(v_i)= (0,0,0)$ for some $1\leq i \leq n$.

The
{\em $\F$-rigidity map} $F^{G,\F}:\bR^{3n}\rightarrow \bR^{m+n}$ is defined by $F^{G,\F}=(F^G,\Theta^\F)$.
The {\em rigidity matrix}
$$R_{\F}(G,p)=\begin{bmatrix}R(G,p) \\S(G,p)\end{bmatrix}\,$$
for the framework $(G,p)$ on $\F$
is (up to scaling)  the Jacobian matrix
of $F^{G,\F}$ evaluated at the point $p$.
It is shown in \cite{NOP} that the null space of $R_{\F}(G,p)$ is the space of infinitesimal flexes of $(G,p)$ on $\F$. This allows us to
characterise infinitesimal rigidity in terms of $R_{\F}(G,p)$.

\begin{thm}[\cite{NOP}]\label{thm:surfacerank}
Let 
$(G,p)$ be a framework on $\F$.
Then $(G,p)$ is infinitesimally rigid on $\F$ if and only if $\rank R_{\F}(G,p) = 3n-\ell$.
\end{thm}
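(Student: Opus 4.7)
The plan is to leverage the result of \cite{NOP}, already stated in the excerpt, that $\ker R_\F(G,p)$ equals the space of infinitesimal flexes of $(G,p)$ on $\F$, and to reinterpret the given definition of infinitesimal rigidity as an equality of kernels.

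First I would observe that the defining condition $(p(u)-p(v))\cdot(s(u)-s(v))=0$ for all $u,v\in V$ is precisely the statement that $s$ is an infinitesimal flex of the complete framework $(K_V,p)$ on $\F$, where $K_V$ denotes the complete graph on $V$. Hence $(G,p)$ is infinitesimally rigid on $\F$ if and only if $\ker R_\F(G,p)\subseteq\ker R_\F(K_V,p)$, and the reverse inclusion is automatic since $R_\F(G,p)$ is a row-submatrix of $R_\F(K_V,p)$. The theorem therefore reduces to the identity
\[
\rank R_\F(K_V,p)=3n-\ell.
\]

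To prove this, I would first exhibit $\ell$ linearly independent elements of $\ker R_\F(K_V,p)$ by restricting to the vertices the infinitesimal isometries of $\bR^3$ that preserve $\F$: for $\F=\Y$, the rotation $s(v)=(-y,x,0)$ about the $z$-axis together with the translation $s(v)=(0,0,1)$ along it; for $\F=\C$, only the rotation; for $\F=\E$, none. Each such $s$ preserves all pairwise squared distances in $\bR^3$ and is tangent to every $\F_i$, so it lies in $\ker R_\F(K_V,p)$. Linear independence is immediate provided the vertices do not all lie on the axis of symmetry, which is automatic on $\Y$ and $\E$ and true on $\C$ unless $p\equiv 0$.

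The harder direction is the reverse bound $\dim \ker R_\F(K_V,p)\leq \ell$. For a non-degenerate $p$ whose image affinely spans $\bR^3$, the $K_V$-edge equations alone force $s$ to be the restriction of an infinitesimal Euclidean isometry of $\bR^3$, hence of the form $s(v_i)=Ap(v_i)+b$ with $A$ skew-symmetric. Substituting into the tangency constraint $s(v_i)\cdot s_i(p(v_i))=0$ and exploiting the inhomogeneity of the radii $r_i$ (and, for $\E$, the distinct coefficients $1,\alpha,\beta$) restricts $(A,b)$ to the $\ell$-dimensional family exhibited above. The principal obstacle is this last case analysis, together with the separate treatment of degenerate configurations (vertices on the $z$-axis for $\C$, or $p$ failing to affinely span $\bR^3$), which must be handled by a direct rank computation on the small matrix $R_\F(K_V,p)$.
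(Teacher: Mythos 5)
The paper does not actually prove this statement -- it is quoted from \cite{NOP}, with the only supporting remark being that the null space of $R_{\F}(G,p)$ is the space of infinitesimal flexes -- so the comparison here is really an assessment of your argument on its own terms. Your first reduction is fine: given the quoted fact about null spaces, infinitesimal rigidity as defined in this paper is exactly $\ker R_\F(G,p)=\ker R_\F(K_V,p)$, so the theorem would follow from the identity $\rank R_\F(K_V,p)=3n-\ell$, and the $\ell$ tangential isometries you list do lie in the kernel.

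The genuine gap is in the ``harder direction''. Your claim that, once $p$ affinely spans $\bR^3$, the tangency constraints cut the infinitesimal isometries of $\bR^3$ down to the $\ell$-dimensional family is false. Take $\F=\Y(1,1,1,1)$ and $p_1=(1,0,0)$, $p_2=(-1,0,1)$, $p_3=(0,1,0)$, $p_4=(1,0,2)$: these affinely span $\bR^3$, yet the infinitesimal rotation about the $x$-axis, $s(x,y,z)=(0,-z,y)$, is tangent to the cylinder at every $p_i$ (each $p_i$ has $y=0$ or $z=0$) and its restriction is independent of the $z$-axis rotation and the vertical translation; hence $\rank R_\Y(K_4,p)\leq 9<10=3n-\ell$. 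So the identity you reduce to does not hold for all frameworks on $\F$, and the failure is not confined to ``small'' configurations that could be settled by a direct rank computation: for such positions, and already for $K_2$ or $K_3$ on $\Y$ (where $K_V$ simply has too few edges to reach rank $3n-\ell$), the framework is infinitesimally rigid under the literal definition used in this paper while the rank condition fails, so the biconditional itself breaks down. In other words, the statement is only correct under the non-degeneracy/regularity hypotheses (equivalently, the ``trivial flex'' formulation) under which it is proved in \cite{NOP}; this is precisely why exceptional complete graphs appear in Theorems \ref{thm:cylinderlaman} and \ref{thm:conelaman}. Your final ``case analysis of degenerate configurations'' is therefore not a routine verification but the place where the real content (and the correct hypotheses) must enter, and as written the argument cannot be completed.
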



Theorem \ref{thm:surfacerank} implies that the (redundant) rigidity of a generic framework $(G,p)$ on $\F$ depends only on the graph $G$. Hence we say that $G$ is {\em (redundantly) rigid on $\F$} if some, or equivalently every, generic realisation of $G$ on $\F$ is (redundantly) rigid.

We close this section by pointing out that Theorem
\ref{thm:cylinderlaman}  implies that a graph which is (redundantly)
rigid on some family of concentric cylinders, is (redundantly) rigid
on all families of concentric cylinders irrespective of their radii.
We do not know if analogous results hold for families of concentric
cones or ellipsoids.

\section{Stresses and stress matrices}
\label{sec:stress}

In this section we develop the notion of an equilibrium stress
for a framework on our surface $\F$
and show that if $(G,p)$ is `fully realised' on $\F$ and has a maximum rank positive semi-definite stress matrix then every equivalent framework on $\F$ is an affine image of $(G,p)$.

A \emph{stress} for a framework $(G,p)$ on $\F$ is a pair $(\omega,\lambda)$, where $\omega:E\to \bR$ and $\lambda:V\to \bR$. A stress $(\omega,\lambda)$ is an \emph{equilibrium stress} if it belongs to the cokernel of $R_{\F}(G,p)$. Thus
$(\omega,\lambda)$ is an equilibrium stress for $(G,p)$ on $\F$ if and only if
 \begin{equation}\label{eq:stressdefn}
 \sum_{j=1}^n \omega_{ij}(p(v_i)-p(v_j)) + \lambda_i s_i(p(v_i))=0 \mbox{ for all $1\leq i \leq n$},
\end{equation}
where $s_i(p(v_i))$ is as defined in Equation (\ref{eq:si}), $\omega_{ij}$ is taken to be equal to $\omega_e$ if $e=v_iv_j\in E$ and to be equal to $0$ if $v_iv_j\not\in E$.
We can think of $\omega$ as a weight function on the edges
and $\lambda$ as a weight function on the vertices.
Note that, if the rows of $R_\F(G,p)$ are linearly independent, then the only equilibrium stress for $(G,p)$ is the all-zero equilibrium stress.

Given a stress $(\omega,\lambda)$ for a framework $(G,p)$ on $\F$ we define: $\Omega=\Omega(\omega)$ to be the $n\times n$ symmetric matrix with off-diagonal entries $-\omega_{ij}$ and diagonal entries $\sum_j \omega_{ij}$;
$\Lambda=\Lambda(\lambda)$ to be the $n\times n$ diagonal matrix with diagonal entries $\lambda_1,\lambda_2,\ldots,\lambda_n$; and $\Delta=\Delta(\lambda, r)$ to be the $n \times n$ diagonal matrix with diagonal entries $\lambda_1 r_1,\lambda_2 r_2,\ldots,\lambda_n r_n$.
The {\em stress matrix} associated to $(\omega,\lambda)$ on $\F$ is the $3n\times 3n$ symmetric matrix

$$\Omega_{\F}(\omega,\lambda)=\begin{bmatrix} \Omega + \Lambda & 0 & 0\\ 0 & \Gamma & 0\\ 0 & 0 & \Sigma \end{bmatrix}$$
where: $\Gamma = \Omega + \Lambda$ if $\F\in \{\Y,\C\}$ and $\Gamma=\Omega+\alpha \Lambda$ if $\F=\E$; $\Sigma=\Omega$ if $\F=\Y$, $\Sigma = \Omega-\Delta$ if $\F=\C$, and $\Sigma=\Omega+\beta \Lambda$ if $\F=\E$.
Our next result, which follows immediately from the definition of an equilibrium stress, tells us how we can use $\Omega_{\F}(\omega,\lambda)$ to determine if $(\omega,\lambda)$ is an equilibrium stress for $(G,p)$ on $\F$.

\begin{lem}
Let $(G,p)$ be a framework on $\F$ with $p(v_i)=(x_i,y_i,z_i)$ and let
\[ \Pi=\begin{bmatrix} x_1 & \dots & x_n & 0 & \dots &0 &0 &\ldots & 0\\
0 & \dots & 0 & y_1 & \dots & y_n & 0 & \dots & 0 \\
0 & \dots &0 &0 &\ldots & 0 & z_1 & \dots & z_n \end{bmatrix}.\]
Then $(\omega,\lambda)$ is an equilibrium stress for $(G,p)$ on $\F$ if and only if
$\Pi \, \Omega_{\F}=0.$
\end{lem}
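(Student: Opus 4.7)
The plan is to verify the lemma by unpacking both sides of the claimed identity into scalar equations and checking that they coincide, block by block. Since $\Omega_{\F}$ is block diagonal with three $n\times n$ blocks $\Omega+\Lambda$, $\Gamma$, $\Sigma$ and $\Pi$ places the $x$-coordinates in the first $n$ columns, the $y$-coordinates in the middle $n$, and the $z$-coordinates in the last $n$, the matrix equation $\Pi\,\Omega_{\F}=0$ decouples into the three row-vector identities
\[
X^T(\Omega+\Lambda)=0,\qquad Y^T\Gamma=0,\qquad Z^T\Sigma=0,
\]
where $X=(x_1,\ldots,x_n)^T$, $Y=(y_1,\ldots,y_n)^T$, $Z=(z_1,\ldots,z_n)^T$. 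Each of these matrices is symmetric, so I can equivalently write $(\Omega+\Lambda)X=0$, $\Gamma Y=0$, $\Sigma Z=0$.

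Next I would split the vector equilibrium equation (\ref{eq:stressdefn}) at each vertex $v_i$ into its three Cartesian components. Using the standard computation that the $i$-th entry of $\Omega X$ is $\sum_j \omega_{ij}(x_i-x_j)$ (with the convention $\omega_{ij}=0$ when $v_iv_j\notin E$), and similarly for $\Omega Y$ and $\Omega Z$, and that $\Lambda X$ has $i$-th entry $\lambda_i x_i$, the $x$-component of (\ref{eq:stressdefn}) reads $[(\Omega+\Lambda)X]_i=0$ for all three surfaces, since $s_i$ always has first coordinate $x_i$.

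The remaining task is to identify $\Gamma$ and $\Sigma$ from the $y$- and $z$-components of $s_i$. Reading off (\ref{eq:si}): for $\F=\Y$ the $y$-component of $s_i$ is $y_i$ and the $z$-component is $0$, so the equations become $(\Omega+\Lambda)Y=0$ and $\Omega Z=0$; for $\F=\C$ the $z$-component is $-r_iz_i$, contributing $-\lambda_i r_i$ to the diagonal and thus giving $(\Omega-\Delta)Z=0$; for $\F=\E$ the $y$- and $z$-components are $\alpha y_i$ and $\beta z_i$, producing $(\Omega+\alpha\Lambda)Y=0$ and $(\Omega+\beta\Lambda)Z=0$. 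In each case the matrices agree with the definitions of $\Gamma$ and $\Sigma$ given before the lemma, so the equilibrium condition is exactly $\Pi\,\Omega_{\F}=0$.

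There is no real obstacle here; the proof is a direct bookkeeping exercise once the block-diagonal structure of $\Omega_{\F}$ and the coordinate structure of $\Pi$ are made explicit. The only place one has to be slightly attentive is the cone, where the minus sign in $s_i$ forces the appearance of $-\Delta$ (rather than $+\Lambda$ scaled by something) in the $\Sigma$-block; this is what motivates the separate definition of $\Delta$.
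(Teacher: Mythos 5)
Your proof is correct and is essentially the paper's argument: the paper simply asserts that the lemma ``follows immediately from the definition of an equilibrium stress,'' and your write-up supplies exactly the bookkeeping that assertion suppresses (block-diagonal decoupling, symmetry of $\Omega+\Lambda$, $\Gamma$, $\Sigma$, and componentwise comparison with Equation (\ref{eq:stressdefn}) via the three cases of $s_i$).
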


We next define the {\em configuration matrix} $C_\F(G,p)$ for a framework $(G,p)$ on $\F$ by modifying the above matrix $\Pi$ as follows:

$$ C_\F(G,p)=
\begin{bmatrix} x_1 & \dots & x_n & 0 & \dots &0 &0 &\ldots & 0\\
0 & \dots & 0 & y_1 & \dots & y_n & 0 & \dots & 0 \\
0 & \dots &0 &0 &\ldots & 0 & z_1 & \dots & z_n \\
y_1 & \dots & y_n & 0 & \dots &0 &0 &\ldots & 0\\
0 & \dots & 0 & x_1 & \dots & x_n & 0 & \dots & 0 \\
0 & \dots &0&0&\ldots& 0 & 1 & \dots & 1 \end{bmatrix} \;\;\; \mbox{if } \M=\Y,$$

$$ C_\F(G,p)=
\begin{bmatrix} x_1 & \dots & x_n & 0 & \dots &0 &0 &\ldots & 0\\
0 & \dots & 0 & y_1 & \dots & y_n & 0 & \dots & 0 \\
0 & \dots &0 &0 &\ldots & 0 & z_1 & \dots & z_n \\
y_1 & \dots & y_n & 0 & \dots &0 &0 &\ldots & 0\\
0 & \dots & 0 & x_1 & \dots & x_n & 0 & \dots & 0 \\
\end{bmatrix}
\;\;\; \mbox{if } \M=\C,$$
and $C_\F(G,p)=\Pi$ if $\M=\E$.
We can use the configuration matrix to obtain an upper bound on the rank of a stress matrix.

\begin{lem}\label{lem:coker} Let $(\omega,\lambda)$ be an equilibrium stress for a framework $(G,p)$ on $\F$.
Then each row of $C_{\F}(G,p)$ belongs to the cokernel of
$\Omega_{\F}(\omega,\lambda)$,
$\rank \Omega_{\F}(\omega,\lambda) \leq 3n-\rank C_{\F}(G,p)$ and, if equality holds, then the rows of $C_{\F}(G,p)$ span the cokernel of
$\Omega_{\F}(\omega,\lambda)$.
\end{lem}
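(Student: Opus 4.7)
The plan has two parts: first verify that each row of $C_\F(G,p)$ lies in the cokernel of $\Omega_\F(\omega,\lambda)$, and then deduce the rank bound by elementary linear algebra.

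For the containment, I would unpack the equilibrium equation \eqref{eq:stressdefn} coordinate-wise. Since $s_i(p(v_i))$ has an explicit form on each surface, \eqref{eq:stressdefn} splits into three scalar equations per vertex, one for each coordinate of $p(v_i)$. A direct calculation, using $\Omega_{ii}=\sum_j \omega_{ij}$, $\Omega_{ij}=-\omega_{ij}$ and symmetry of $\Omega$, shows that for any row vector $u=(u_1,\ldots,u_n)$ and scalar $c$,
\[
\bigl(u(\Omega + c\Lambda)\bigr)_k \;=\; \sum_{j} \omega_{kj}(u_k-u_j) + c\lambda_k u_k,
\]
and similarly $\bigl(u(\Omega-\Delta)\bigr)_k = \sum_j \omega_{kj}(u_k-u_j) - r_k\lambda_k u_k$. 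Comparing these with the scalar equilibrium equations recovers exactly the statement $\Pi\,\Omega_\F=0$ from the preceding lemma, which disposes of the first three rows of $C_\F(G,p)$.

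The additional rows for $\F=\Y$ and $\F=\C$ then need a row-by-row check, but each reduces to the same identity. For instance, on $\Y$ the row $(y_1,\ldots,y_n,0,\ldots,0,0,\ldots,0)$ acts nontrivially only through the first diagonal block $\Omega+\Lambda$ of $\Omega_\F$, giving $(y_1,\ldots,y_n)(\Omega+\Lambda)$, which vanishes by the $y$-coordinate part of \eqref{eq:stressdefn}. The swapped row $(0,\ldots,0,x_1,\ldots,x_n,0,\ldots,0)$ is handled by the middle block in the same way, and the row $(0,\ldots,0,1,\ldots,1)$ is in the cokernel because every row sum of $\Omega$ is zero. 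The extra row on $\C$ is treated identically, and for $\F=\E$ there are no additional rows to check. Conceptually, these extra rows encode the linearised continuous isometries of $\F$ (rotation and $z$-translation on $\Y$, rotation only on $\C$), which is why they lie in the cokernel of any surface stress matrix.

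For the rank statement, I would apply rank-nullity: since $\Omega_\F(\omega,\lambda)$ is a symmetric $3n\times 3n$ matrix, its cokernel has dimension $3n-\rank\Omega_\F(\omega,\lambda)$. The rows of $C_\F(G,p)$ all lie in this cokernel, which gives $\rank C_\F(G,p) \leq 3n-\rank\Omega_\F(\omega,\lambda)$; if equality holds, these $\rank C_\F(G,p)$ linearly independent vectors must span the full cokernel by a dimension count. The only potential obstacle is the case-by-case bookkeeping across the three surfaces, but once the scalar identity above is in hand each verification is just reading off the appropriate entry of \eqref{eq:stressdefn}.
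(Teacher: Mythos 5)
Your proposal is correct and follows essentially the same route as the paper: verify $C_{\F}(G,p)\,\Omega_{\F}(\omega,\lambda)=0$ directly from Equation (\ref{eq:stressdefn}) and the block structure of $\Omega_{\F}$, then conclude by a dimension count on the cokernel; you merely spell out the row-by-row check (including the extra rows for $\Y$ and $\C$ and the all-ones row killed by the zero row sums of $\Omega$) that the paper leaves implicit. The only nitpick is that in the equality case one should speak of the row space of $C_{\F}(G,p)$ having dimension $\rank C_{\F}(G,p)$ rather than of the rows being linearly independent, but the dimension argument goes through unchanged.
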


\begin{proof}
Equation (\ref{eq:stressdefn}) and the definitions of $\Omega_{\F}(\omega,\lambda)$ and
$C_{\F}(G,p)$ imply that
\[ C_{\F}(G,p) \,  \Omega_{\F}(\omega,\lambda)=0.\]
Thus each row of $C_{\F}(G,p)$ belongs to the cokernel of
$\Omega_{\F}(\omega,\lambda)$. Hence $\dim \coker \Omega_{\F}(\omega,\lambda) \geq \rank C_{\F}(G,p)$ and we have
$\rank \Omega_{\F}(\omega,\lambda) =3n-\dim \coker \Omega_{\F}(\omega,\lambda)\leq 3n-\rank C_{\F}(G,p).$
Furthermore, if equality holds, then $\coker \Omega_{\F}(\omega,\lambda)$ is equal to the row space of $C_{\F}(G,p)$.
\end{proof}

We next use Lemma \ref{lem:coker} to show that, if a framework
$(G,p)$ on $\F$ has an equilibrium stress $(\omega,\lambda)$ whose associated stress
matrix has maximum rank, then every framework $(G,q)$ on $\F$ which
has $(\omega,\lambda)$ as an equilibrium stress can be obtained from $(G,p)$ by an affine transformation.

\begin{lem}\label{lem:1}
Let $(G,p)$ and $(G,q)$ be frameworks on $\F$
and let $(\omega,\lambda)$ be an equilibrium stress for both $(G,p)$ and $(G,q)$. Suppose that $\rank \Omega_{\F}(\omega,\lambda) = 3n-\rank C_{\F}(G,p)$.
Then, for some fixed $a,b,c,d,e,f \in \bR$, we have
\begin{equation}\label{eqn:affine}q(v_i)=
\begin{bmatrix} a & b & 0\\ c & d & 0\\ 0 & 0 & e \end{bmatrix} \, p(v_i) + \begin{bmatrix} 0 \\ 0 \\ f\end{bmatrix} \mbox{ for all $1\leq i\leq n$,}\end{equation}
where $f=0$ if $\F\in\{\C,\E\}$ and $b=c=0$ if $\F=\E$.
\end{lem}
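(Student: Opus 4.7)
The plan is to exploit Lemma \ref{lem:coker} together with the block structure of the configuration matrix $C_\F$. The rank hypothesis tells us that the rows of $C_\F(G,p)$ \emph{span} the cokernel of $\Omega_\F(\omega,\lambda)$. Since $(\omega,\lambda)$ is also an equilibrium stress for $(G,q)$, Lemma \ref{lem:coker} applied to $(G,q)$ says that every row of $C_\F(G,q)$ lies in the same cokernel, hence is a linear combination of the rows of $C_\F(G,p)$. The lemma will follow by reading off these linear combinations, column block by column block.

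Write $p(v_i)=(x_i,y_i,z_i)$ and $q(v_i)=(x'_i,y'_i,z'_i)$. The matrix $C_\F(G,p)$ has a $3\times 3$ block structure in its $3n$ columns (indexed by $x$-, $y$-, and $z$-coordinates of the vertices). First I would consider $\F=\Y$, where $C_\F(G,p)$ has six rows. The first row of $C_\F(G,q)$ is supported only in the $x$-column block, and the only rows of $C_\F(G,p)$ with entries in that block are rows $1$ and $4$ (carrying $x_i$ and $y_i$ respectively). Hence $x'_i=ax_i+by_i$ for constants $a,b$ independent of $i$. The same argument on the second row of $C_\F(G,q)$, using rows $2$ and $5$ of $C_\F(G,p)$, yields $y'_i=cx_i+dy_i$. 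Finally, the third row of $C_\F(G,q)$ sits in the $z$-column block, where only rows $3$ and $6$ of $C_\F(G,p)$ have support; this gives $z'_i=ez_i+f$. This is precisely the affine form claimed.

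For $\F=\C$ the configuration matrix has only the first five rows above (no constant row), so the same three arguments go through, except that $z'_i=ez_i$ with $f=0$. For $\F=\E$ we have $C_\F(G,p)=\Pi$ with just three rows, so only row $j$ of $C_\F(G,p)$ contributes to row $j$ of $C_\F(G,q)$ for $j=1,2,3$, forcing $x'_i=ax_i$, $y'_i=dy_i$, $z'_i=ez_i$, i.e.\ $b=c=f=0$.

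I do not see a substantive obstacle: the only thing to be careful about is matching support patterns of the rows of $C_\F(G,q)$ with the available rows of $C_\F(G,p)$ on each of the three surfaces, and verifying in each case that the resulting scalars $a,b,c,d,e,f$ are independent of the vertex $v_i$ (which is automatic because the linear combination is the same for every column within a given block). The only minor point worth checking is that in the cylinder case the constant row $(0,\dots,0\mid 0,\dots,0\mid 1,\dots,1)$ of $C_\F(G,p)$ really does lie in the cokernel of $\Omega_\F(\omega,\lambda)$; this is immediate from the equilibrium condition \eqref{eq:stressdefn} for the surface $\Y$, where the $z$-component reads $\sum_j\omega_{ij}(z_i-z_j)=0$ (since $s_i$ has zero $z$-component on $\Y$), giving a row sum identity that is exactly the vanishing $C_\F(G,p)\,\Omega_\F(\omega,\lambda)=0$ along that row.
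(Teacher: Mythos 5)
Your argument is correct and is essentially the paper's own proof: apply Lemma \ref{lem:coker} to both frameworks, use the rank hypothesis to conclude that the rows of $C_\F(G,p)$ span the cokernel of $\Omega_\F(\omega,\lambda)$, so each row of $C_\F(G,q)$ is a linear combination of them, and then read off the affine form from the block structure of the configuration matrices. You simply make explicit the final "structure of the matrices" step that the paper leaves to the reader (and your side remark about the all-ones row is already contained in Lemma \ref{lem:coker}, since $\Omega$ annihilates the all-ones vector by construction).
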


\begin{proof}
Lemma \ref{lem:coker} implies that the rows of $C_{\F}(G,p)$ span
the cokernel of $\Omega_{\F}(\omega,\lambda)$, and that each row of $C_{\F}(G,q)$ belongs to the cokernel of $\Omega_{\F}(\omega,\lambda)$.
It follows that each row of $C_{\F}(G,q)$ is a linear combination of the rows of $C_{\F}(G,p)$. The lemma now follows from the structure of the matrices
$C_{\F}(G,p)$ and $C_{\F}(G,q)$.
\end{proof}

We will say that $(G,q)$ is {\em an $\F$-affine image of $(G,p)$} if it satisfies the conclusion of Lemma \ref{lem:1}.
Our next result gives a converse to Lemma \ref{lem:1}.

\begin{lem}\label{lem:converse}
Let $(G,p)$ and $(G,q)$ be frameworks on $\F$ such that $(G,q)$ is an $\F$-affine image of $(G,p)$.
Then every equilibrium stress $(\omega,\lambda)$ for $(G,p)$ is an equilibrium stress for $(G,q)$.
\end{lem}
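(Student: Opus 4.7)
The plan is to verify the equilibrium stress equation (\ref{eq:stressdefn}) for $(G,q)$ directly. First I will use the affine description from Lemma~\ref{lem:1}: there exist scalars $a,b,c,d,e,f\in\bR$ (with $f=0$ when $\F\in\{\C,\E\}$ and $b=c=0$ when $\F=\E$) such that $q(v_i)=Ap(v_i)+t$ for every $i$, where
\[ A = \begin{bmatrix} a & b & 0\\ c & d & 0 \\ 0 & 0 & e \end{bmatrix}, \qquad t=\begin{bmatrix}0\\0\\f\end{bmatrix}. \]
The translation $t$ cancels in edge differences, so $q(v_i)-q(v_j)=A\bigl(p(v_i)-p(v_j)\bigr)$. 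Applying the equilibrium condition for $p$ at $v_i$ then gives
\[ \sum_{j=1}^n \omega_{ij}\,(q(v_i)-q(v_j)) \;=\; A\sum_{j=1}^n \omega_{ij}\,(p(v_i)-p(v_j)) \;=\; -\lambda_i\, A\,s_i(p(v_i)). \]

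The one remaining step is to check the intertwining identity $A\,s_i(p(v_i))=s_i(q(v_i))$; once this is known, adding $\lambda_i s_i(q(v_i))$ to the previous display yields (\ref{eq:stressdefn}) for $q$. I plan to verify the identity by a short case check against Equation~(\ref{eq:si}). For $\F=\Y$, $s_i$ forgets the $z$-coordinate, so the top-left $2\times 2$ block of $A$ acting on $(x_i,y_i)$ gives the answer and the translation by $f$ is irrelevant. For $\F=\C$, $s_i(x,y,z)=(x,y,-r_iz)$; here $f=0$ is needed so that the $z$-translation does not introduce a spurious $-r_if$ term, after which the scalar $e$ clearly commutes with multiplication by $-r_i$. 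For $\F=\E$, $s_i(x,y,z)=(x,\alpha y,\beta z)$; commuting $A$ with $\mathrm{diag}(1,\alpha,\beta)$ forces $b=c=0$, and $f=0$ again disposes of the third coordinate.

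I do not expect any real obstacle. The only thing that could go wrong would be a mismatch between the admissible affine transformations and the pointwise surface gradients $s_i$, but the form given in Lemma~\ref{lem:1} has been tailored precisely so that $A$ intertwines with each $s_i$. The proof therefore reduces to the three elementary matrix computations sketched above, and the argument goes through uniformly, vertex by vertex, across all three surfaces.
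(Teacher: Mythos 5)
Your proposal is correct and is essentially the paper's own argument: the paper likewise writes $q(v_i)=Ap(v_i)+t$, factors $A$ out of the vertex equation, and invokes the same intertwining identity $s_i(Ap(v_i)+t)=As_i(p(v_i))$ (which it justifies simply ``by the definitions of $s_i,A,t$''), then applies Equation~(\ref{eq:stressdefn}). Your explicit case-by-case verification of that identity for $\Y$, $\C$ and $\E$ just spells out the step the paper leaves to the reader.
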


\begin{proof}
Since $(G,q)$ is an $\F$-affine image of $(G,p)$, we have $q(v_i)=Ap(v_i)+t$ for some fixed $A,t$ satisfying the conclusion of
Lemma \ref{lem:1}, and all $1\leq i\leq n$.
Hence
\begin{eqnarray*}
\sum_j \omega_{ij}(q(v_i)-q(v_j))+\lambda_i s_i(q(v_i)) &=& \sum_j \omega_{ij}A(p(v_i)-p(v_j)) + \lambda_i s_i(Ap(v_i)+t)
\\ &=& A\left( \sum_j \omega_{ij}(p(v_i)-p(v_j)) + \lambda_i s_i(p(v_i))\right), \end{eqnarray*}
since $s_i(Ap(v_i)+t)=As_i(p(v_i))$ by the definitions of $s_i,A,t$. The
lemma now follows by applying Equation (\ref{eq:stressdefn}).
\end{proof}

A framework $(G,p)$ on $\F$ is {\em fully realised} on $\F$ if the rows of its configuration matrix are linearly independent i.e. we have $\rank C_\F(G,p)=\mu$ where
\begin{equation}\label{eq:mu}
\mu=\begin{cases}
6 & \mbox{ if } \F=\Y; \\
5 & \mbox{ if } \F=\C; \\
3 & \mbox{ if } \F=\E.
\end{cases}
\end{equation}
It can be seen that $(G,p)$ is fully realised on $\F$ if and only if
its points do not all lie on: a plane containing or
perpendicular to the $z$-axis when $\F=\Y$; a plane containing the $z$-axis when $\F=\C$;  one of the planes $x=0$, $y=0$ or $z=0$ when $\F=\E$.

We will next use a similar argument to that used by Connelly in \cite{C82}
to show that, if $(G,p)$ has a positive semi-definite stress matrix of maximum rank then any equivalent framework is an $S$-affine image of $(G,p)$.

The {\em energy function} associated to a stress $(\omega,\lambda)$ for a framework $(G,q)$ and a family of concentric surfaces $\F$ is
defined as
\[ E_{\omega,\lambda,\F}(q)=\sum_{1\leq i<j\leq n} \omega_{ij}\|q(v_i)-q(v_j)\|^2 + \sum_{i=1}^n \lambda_i k(q(v_i))\]
where
\begin{equation}\label{eq:k}
k(x,y,z)=\begin{cases}
x^2+y^2 & \mbox{ if } \F=\Y; \\
x^2+y^2-r_iz^2 & \mbox{ if } \F=\C; \\
x^2+\alpha y^2+\beta z^2 & \mbox{ if } \F=\E.
\end{cases}
\end{equation}
Then the differential of $E_{\omega,\lambda,\F}(q)$ at a point $q$
with $q(v_i)=(x_i,y_i,z_i)$ for all $1\leq i\leq n$ is given by
\begin{equation}\label{eq:grad1}
dE_{\omega,\lambda,\F}|_q = 2
(x_1,\ldots, x_n,y_1,\dots, y_n, z_1, \dots,
z_n)\Omega_{\F}\,(\omega,\lambda).
\end{equation}
Hence, when $(G,q)$ is a framework on $\F$, $q$ is a critical point of $E_{\omega,\lambda,\F}$ if and only if $(\omega,\lambda)$ is an equilibrium stress for $(G,q)$ on $\F$.

\begin{lem}\label{lem:0}
Suppose $q\in \bR^{3n}$. If $q$ is a critical point of
$E_{\omega,\lambda,\F}$ then $E_{\omega,\lambda,\F}(q)=0$. In
addition, when $\Omega_{\F}(\omega,\lambda)$ is positive
semi-definite and $(G,q)$ lies on $\F$, we have
$E_{\omega,\lambda,\F}(q)=0$ if and only if $q$ is a critical point
of $E_{\omega,\lambda,\F}$.
\end{lem}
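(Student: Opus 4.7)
The plan is to observe that the energy function $E_{\omega,\lambda,\F}$ is, when $q\in\bR^{3n}$ is written as the column vector $(x_1,\dots,x_n,y_1,\dots,y_n,z_1,\dots,z_n)^T$, precisely the quadratic form $q\mapsto q^T\Omega_{\F}(\omega,\lambda)\,q$. First I would verify this identification by expanding the two sums in the definition of $E$: the edge sum contributes $x^T\Omega x+y^T\Omega y+z^T\Omega z$, and the vertex sum $\sum_i\lambda_i k(q(v_i))$ contributes exactly the additional $\Lambda$, $\alpha\Lambda$, $\beta\Lambda$ or $-\Delta$ terms prescribed by (\ref{eq:k}), matching the three diagonal blocks $\Omega+\Lambda$, $\Gamma$, $\Sigma$ in each of the cases $\F=\Y,\C,\E$. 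This is consistent with the gradient formula (\ref{eq:grad1}): differentiating $q^T\Omega_{\F}q$ gives $2q^T\Omega_{\F}$, so by symmetry of $\Omega_{\F}$ the critical points of $E$ are exactly those $q$ with $\Omega_{\F}(\omega,\lambda)\,q=0$.

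Given this quadratic-form description, the first assertion is a one-liner: if $q$ is critical then $\Omega_{\F}q=0$, so $E_{\omega,\lambda,\F}(q)=q^T(\Omega_{\F}q)=0$. For the converse under the positive semi-definiteness hypothesis, I would appeal to the standard fact that for a PSD matrix $M$ the equality $v^TMv=0$ forces $Mv=0$; the quickest route is to factor $M=B^TB$ and note that $v^TMv=\|Bv\|^2=0$ implies $Bv=0$, whence $Mv=B^TBv=0$. Applied with $M=\Omega_{\F}(\omega,\lambda)$ and $v=q$, this shows $E_{\omega,\lambda,\F}(q)=0\Rightarrow\Omega_{\F}q=0$, i.e.\ $q$ is critical.

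There is no real obstacle here: once the quadratic-form identification is in place, each half of the lemma reduces to a one-line linear-algebra argument. The only point worth flagging is that the hypothesis $(G,q)$ lies on $\F$ does not actually enter the argument, since the identity $E_{\omega,\lambda,\F}(q)=q^T\Omega_{\F}(\omega,\lambda)\,q$ holds for every $q\in\bR^{3n}$; I presume the hypothesis is kept in the statement because the lemma will subsequently be applied only to frameworks constrained to $\F$.
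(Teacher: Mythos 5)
Your proposal is correct and takes essentially the same approach as the paper: both rest on the identity $E_{\omega,\lambda,\F}(q)=q^{T}\Omega_{\F}(\omega,\lambda)\,q$ together with the gradient formula (\ref{eq:grad1}). The only cosmetic difference is in the converse direction, where the paper notes that $E_{\omega,\lambda,\F}\geq 0$ and $E_{\omega,\lambda,\F}(q)=0$ make $q$ a global minimiser (hence critical), while you invoke the standard fact that $v^{T}Mv=0$ forces $Mv=0$ for positive semi-definite $M$; your observation that the hypothesis that $(G,q)$ lies on $\F$ is never actually used is likewise consistent with the paper's proof.
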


\begin{proof}
Suppose $q$ is a critical point of $E_{\omega,\lambda,\F}$. Then the
differential of $E_{\omega,\lambda,\F}(q)$ in the direction of $q$
is zero.
This implies that $E_{\omega,\lambda,\F}(tq)$ is constant for all
$t\in \bR$. We can now take $t=0$ to deduce that
$E_{\omega,\lambda,\F}(q)=E_{\omega,\lambda,\F}(0)=0$.

Observe that, if $q(v_i)=(x_i,y_i,z_i)$ for all $1\leq i\leq n$,
then
$$E_{\omega,\lambda,\F}(q)=(x_1,\ldots, x_n,y_1,\dots, y_n, z_1, \dots,
z_n) \Omega_{\F}(\omega,\lambda)(x_1,\ldots, x_n,y_1,\dots, y_n,
z_1, \dots, z_n)^T.$$ Thus, when $\Omega_{\F}(\omega,\lambda)$ is
positive semi-definite,
we have $E_{\omega,\lambda,\F}(q)\geq 0$ for all $q \in \bR^{3n}$.
Hence $q$ is a critical point of $E_{\omega,\lambda,\F}$ if
$E_{\omega,\lambda,\F}(q)=0$.
\end{proof}

We can now deduce that equivalent frameworks with maximum rank
positive semi-definite stress matrices are linked by affine
transformations.

\begin{thm}\label{thm:psdequiv}
Let $(G,p)$ be a framework which is fully realised on $\F$
and let $(\omega,\lambda)$ be an equilibrium stress for $(G,p)$. Suppose that $\Omega_{\F}(\omega,\lambda)$ is positive semi-definite and $\rank \Omega_{\F}(\omega,\lambda) = 3n-\mu$. Let $(G,q)$ be a framework on $\F$ which is equivalent to $(G,p)$. Then $(\omega,\lambda)$ is an equilibrium stress for $(G,q)$, and $(G,q)$ is an $\F$-affine image of $(G,p)$.
\end{thm}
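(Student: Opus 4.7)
The plan is to argue via the energy function $E_{\omega,\lambda,\F}$ and then invoke Lemma \ref{lem:1}. The key observation is that the hypothesis that $(G,p)$ and $(G,q)$ are equivalent on $\F$ forces $E_{\omega,\lambda,\F}(p)=E_{\omega,\lambda,\F}(q)$. Indeed, equivalence gives $\|p(v_i)-p(v_j)\|=\|q(v_i)-q(v_j)\|$ for every edge $v_iv_j$, so the edge term of the energy is the same for $p$ and $q$. For the vertex term, comparing the definitions of $k$ in \eqref{eq:k} with $h_i$ in \eqref{eqn:surface} shows that $k(p(v_i))=k(q(v_i))$ for each $i$: when $\F=\Y$ both sides equal $r_i$; when $\F=\C$ both sides equal $0$ (since $h_i$ agrees with $k$ on $\C$); when $\F=\E$ both sides equal $r_i$. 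Hence the two energies agree.

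Next I would apply Lemma \ref{lem:0}. Because $(\omega,\lambda)$ is an equilibrium stress for $(G,p)$, by the remark following equation \eqref{eq:grad1} the point $p$ is a critical point of $E_{\omega,\lambda,\F}$, so the first part of Lemma \ref{lem:0} yields $E_{\omega,\lambda,\F}(p)=0$. Combining with the previous paragraph, $E_{\omega,\lambda,\F}(q)=0$. Now $(G,q)$ lies on $\F$ and $\Omega_{\F}(\omega,\lambda)$ is positive semi-definite, so the second part of Lemma \ref{lem:0} gives that $q$ is a critical point of $E_{\omega,\lambda,\F}$. Applying the criterion from \eqref{eq:grad1} once more, this translates to $(\omega,\lambda)$ being an equilibrium stress for $(G,q)$ on $\F$.

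To finish, I would show $(G,q)$ is an $\F$-affine image of $(G,p)$ by a direct appeal to Lemma \ref{lem:1}. The hypothesis that $(G,p)$ is fully realised says $\rank C_{\F}(G,p)=\mu$, and we are assuming $\rank \Omega_{\F}(\omega,\lambda)=3n-\mu$, so $\rank \Omega_{\F}(\omega,\lambda)=3n-\rank C_{\F}(G,p)$. The previous paragraph supplies the second hypothesis of Lemma \ref{lem:1}, namely that $(\omega,\lambda)$ is an equilibrium stress for both $(G,p)$ and $(G,q)$. The conclusion of Lemma \ref{lem:1} then gives exactly the affine relationship \eqref{eqn:affine}, completing the proof.

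The only subtle point, and the step I expect to need the most care in writing down, is the verification that the vertex term $\sum_i\lambda_i k(q(v_i))$ equals $\sum_i\lambda_i k(p(v_i))$ in the cylinder case, because there $k$ depends on $i$ via $r_i$; this uses that $h_i$ vanishes on $\F_i$ for both $p$ and $q$, which is precisely the statement that both frameworks lie on $\F$. Once this bookkeeping is done, the rest is a clean assembly of Lemmas \ref{lem:0} and \ref{lem:1}.
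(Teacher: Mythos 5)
Your argument is correct and is essentially the paper's own proof: both show that equivalence plus lying on $\F$ forces $E_{\omega,\lambda,\F}(q)=E_{\omega,\lambda,\F}(p)=0$, then invoke Lemma \ref{lem:0} to get that $q$ is critical (hence $(\omega,\lambda)$ is an equilibrium stress for $(G,q)$) and Lemma \ref{lem:1} for the affine conclusion. The ``subtle point'' you flag about the vertex terms is handled in the paper by exactly the observation you make, namely that $k(p(v_i))=k(q(v_i))$ because both points lie on $\F_i$.
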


\begin{proof}
Since $(\omega,\lambda)$ is an equilibrium stress for $(G,p)$ we have $E_{\omega,\lambda,\F}(p)=0$.
Then
$$ E_{\omega,\lambda,\F}(q) =  E_{\omega,\lambda,\F}({q}) -E_{\omega,\lambda,\F}(p) =
\sum_{i=1}^n \lambda_i [k(q(v_i)) - k(p(v_i))]=0$$ since
$(G,p)$ and $(G,q)$ are equivalent and both lie on $\F$. Lemma
\ref{lem:0} now implies that
$q$ is a critical point of $E_{\omega,\lambda,\F}$ and hence
$(\omega,\lambda)$ is an equilibrium stress for $(G,q)$. The last part of the theorem now
follows from Lemma \ref{lem:1}.
\end{proof}


We close this section by showing that any two equivalent generic frameworks on $\F$ which are linked by an $\F$-affine map, are in fact congruent.  

We say that a framework $(G,p)$ on $\F$ is
\emph{quasi-generic} if it is congruent to a generic framework on
$\F$.  The framework $(G,p)$  is said to be in \emph{standard position} on
$\F$ if $p(v_1)=(x_1,y_1,z_1)$ and: $p(v_1)=(0,y_1,0)$ when $\F=\Y$; $p(v_1)=(0,y_1,z_1)$ when $\F=\C$. All frameworks on $\E$ are taken to be
in standard position. Two frameworks on $\F$ are {\em
$\F$-congruent} if there is an isometry of $\F$ which maps one on to
the other.
We use $\overline{\bK}$ to denote the algebraic closure of a field $\bK$.

We will need the following result, \cite[Lemma $8$]{JMN}.

\begin{lem}\label{lem:q-g<=>}
Suppose $(G,p)$ and $(G,p_0)$ are $\F$-congruent frameworks on $\F$ and $(G,p_0)$ is in standard position on $\F$.
Then $(G,p)$ is quasi-generic if and only if $\td[\bQ(r,p_0):\bQ(r)]=2n-\ell$.
\end{lem}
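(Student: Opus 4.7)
The plan is to interpret standard position as quotienting out the $\ell$-dimensional continuous isometry group of $\F$, so that the equality $\td[\bQ(r,p_0):\bQ(r)] = 2n - \ell$ reflects the balance between the $2n$ ``generic'' degrees of freedom of a framework on $\F$ and the $\ell$ isometry parameters eliminated by the standard slice. I would first reduce to the case $p=p_0$: since $\F$-congruence is an equivalence relation and quasi-genericity is preserved along $\F$-congruence classes, we may replace $(G,p)$ by $(G,p_0)$. Then parametrise the identity component of the isometry group of $\F$ as $\phi_t$ with $t = (t_1,\ldots,t_\ell) \in \bR^\ell$ (rotation around and translation along the $z$-axis when $\F=\Y$; rotation around the $z$-axis when $\F=\C$; empty tuple when $\F=\E$), arranged so that both $\phi_t$ and $\phi_t^{-1}$ are rational in $t$ and in the input coordinates.

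For the forward direction, suppose $(G,p_0)$ is quasi-generic and let $(G,q)$ be a generic framework in its $\F$-congruence class; write $q = \phi_t(p_0)$ for some $t$. The coordinates of $p_0$ satisfy the $n$ surface equations $h_i(p_0(v_i)) = 0$ together with the $\ell$ coordinate-vanishing equations defining standard position, and a direct check in each of the three cases shows that these $n+\ell$ equations are algebraically independent, yielding $\td[\bQ(r,p_0):\bQ(r)] \leq 2n - \ell$. The matching lower bound follows from the inclusion $\bQ(r,q) \subseteq \bQ(r,p_0,t)$ combined with
\[ 2n = \td[\bQ(r,q):\bQ(r)] \leq \td[\bQ(r,p_0):\bQ(r)] + \ell. \]

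For the converse, suppose $\td[\bQ(r,p_0):\bQ(r)] = 2n - \ell$. Choose $t$ algebraically independent over $\bQ(r,p_0)$ and set $q = \phi_t(p_0)$; then $(G,q)$ is $\F$-congruent to $(G,p)$, and the rationality of $\phi_t^{-1}$ gives $\bQ(r,p_0,t) = \bQ(r,q,t)$, so $\td[\bQ(r,q,t):\bQ(r)] = 2n$. It remains to show that $t$ is algebraic over $\bQ(r,q)$: substituting $p_0 = \phi_t^{-1}(q)$ into the $\ell$ standard-position vanishing conditions at $v_1$ produces $\ell$ algebraic equations in $t$ with coefficients in $\bQ(r,q)$, and one checks that these admit only finitely many common solutions---equivalently, the isometry-group stabiliser of a generic standard-position framework is finite. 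Once this is in hand, $\td[\bQ(r,q):\bQ(r)] = 2n$, so $(G,q)$ is generic and $(G,p)$ is quasi-generic.

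The hard part will be the bookkeeping in this last algebraicity step, together with the case-by-case verification that the $\ell$ standard-position equations are algebraically independent modulo the surface equations and that the associated stabiliser is finite. The case $\F=\E$ is vacuous since $\ell = 0$, and for $\F=\Y,\C$ these are routine computations using the explicit parametrisations of rotations around (and translations along) the $z$-axis that pin $v_1$ to a slice transverse to the group action.
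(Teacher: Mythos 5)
Your overall strategy --- an unconditional upper bound from the $n$ surface equations plus the $\ell$ standard-position equations, a lower bound by transporting a generic witness with an $\ell$-parameter isometry, and a converse obtained by applying a transcendental isometry $\phi_t$ and showing the standard-position conditions force $t$ to be algebraic over $\bQ(r,q)$ --- is the natural one, and your converse direction is essentially complete (modulo using an algebraic parametrisation of the rotation, which you flag). Note that the paper itself does not prove this statement; it is imported verbatim from \cite[Lemma 8]{JMN}, so there is no in-paper proof to compare against.

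The genuine gap is at the first step of the forward direction. Quasi-genericity of $(G,p_0)$ supplies a generic framework $(G,q)$ that is \emph{congruent} to $(G,p_0)$ (equal pairwise distances), not one in its \emph{$\F$-congruence} class, yet you immediately write $q=\phi_t(p_0)$ with $\phi_t$ a continuous isometry of $\F$. Bridging this is the substantive content of the forward implication: one must show that a congruence between two frameworks both lying on $\F$ is induced by an isometry of $\bR^3$, and that any isometry of $\bR^3$ carrying a sufficiently generic configuration on $\F$ to another configuration on $\F$ must preserve the family $\F$, hence lies in the ($\leq\ell$)-dimensional group up to reflections (the reflections you also omit, though they are harmless for transcendence degree). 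Both steps require the points to be numerous and affinely spanning; without them your key inequality $\td[\bQ(r,q):\bQ(r)]\leq \td[\bQ(r,p_0):\bQ(r)]+\ell$ degrades to $+6$, the dimension of the full isometry group of $\bR^3$, which is useless when $\ell<6$. This is not mere bookkeeping: for $n=2$ on $\Y$ one can take $p_0(v_1)=(0,\sqrt{r_1},0)$ and $p_0(v_2)=(x_2,\sqrt{r_2-x_2^2},0)$ with $x_2$ transcendental, so that $\td[\bQ(r,p_0):\bQ(r)]=1<2n-\ell$, yet $(G,p_0)$ is congruent to a generic two-point framework on $\Y$ realising the same (transcendental) distance. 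So the reduction from congruence to $\F$-congruence is exactly where the implicit hypotheses on $n$ and the configuration enter, and your proposal assumes it rather than proving it.
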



\begin{lem}\label{lem:global}
Let $(G,p)$ be a generic framework on $\F$ with $n\geq 5$ vertices.
Suppose that $(G,q)$ is an equivalent framework to $(G,p)$ on $S$ which is also an $\F$-affine image of $(G,p)$.
Then $(G,q)$ is congruent to $(G,p)$
\end{lem}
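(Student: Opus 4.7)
The plan is to exploit the restricted form of the $\F$-affine map given by Lemma~\ref{lem:1}, combined with the surface constraints $q(v_i)\in\F_i$ and the edge-length equations, to force the linear part $A$ of the map to be orthogonal. Once $A$ is orthogonal, $x\mapsto Ax+t$ is a global isometry of $\bR^3$ and $(G,q)$ is automatically congruent to $(G,p)$. I would split into the three cases $\F=\Y,\C,\E$.

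Write $q(v_i)=Ap(v_i)+t$ with $A,t$ as in Lemma~\ref{lem:1} and $p(v_i)=(x_i,y_i,z_i)$. Substituting into the surface equation $h_i(q(v_i))=0$ and subtracting $h_i(p(v_i))=0$ produces, at each $i$, a quadratic identity of the form
\[P\,\phi_1(p(v_i))+Q\,\phi_2(p(v_i))+R\,\phi_3(p(v_i))=0,\]
where $(\phi_1,\phi_2,\phi_3)$ is $(x^2,y^2,xy)$ for $\F\in\{\Y,\C\}$ and $(x^2,y^2,z^2)$ for $\F=\E$, and $P,Q,R$ are fixed polynomial expressions in the entries $a,b,c,d,e$ of $A$. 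Explicitly, for $\F=\Y$ one has $P=a^2+c^2-1$, $Q=b^2+d^2-1$, $R=2(ab+cd)$; for $\F=\C$ one replaces $1$ by $e^2$; and for $\F=\E$ one has $P=a^2-1$, $Q=\alpha(d^2-1)$, $R=\beta(e^2-1)$.

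The key step is to argue $P=Q=R=0$. Consider the $n\times 3$ matrix $M$ whose $i$-th row is $(\phi_1(p(v_i)),\phi_2(p(v_i)),\phi_3(p(v_i)))$. I would verify, by evaluating a $3\times 3$ minor of $M$ at a convenient test configuration on $\F$ (for instance, three points lying close to coordinate axes), that this minor is not identically zero as a polynomial on $\F$. Genericity of $p$ over $\bQ(r)$ then guarantees that the minor is nonzero at $p$, so $M$ has rank $3$ and $P=Q=R=0$. This immediately settles the $\F=\E$ case, since $a^2=d^2=e^2=1$ makes $A$ a diagonal $\pm 1$-matrix, hence orthogonal. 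For $\F=\Y$ we obtain that the upper $2\times 2$ block of $A$ is orthogonal, and for $\F=\C$, that it is $e$ times an orthogonal matrix.

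To finish for $\F\in\{\Y,\C\}$, I would apply equivalence on any edge $v_iv_j$ of $G$: expanding $\|q(v_i)-q(v_j)\|^2=\|p(v_i)-p(v_j)\|^2$ with the now-known form of the upper block reduces this to $(e^2-1)(z_i-z_j)^2=0$ in the case $\F=\Y$ and $(e^2-1)\|p(v_i)-p(v_j)\|^2=0$ in the case $\F=\C$, and genericity of $p$ makes the corresponding factor nonzero, forcing $e=\pm 1$. Hence $A$ is orthogonal in every case; since $t$ is either zero or purely along the $z$-axis, $x\mapsto Ax+t$ is an isometry of $\bR^3$, and $(G,q)$ is congruent to $(G,p)$. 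The main obstacle is the rank-$3$ verification for $M$, which requires a little bookkeeping for each of the three surface families; the hypothesis $n\geq 5$ then comfortably supplies the three linearly independent rows needed.
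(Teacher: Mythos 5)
Your proof is correct, but it takes a genuinely different route from the paper's. The paper first moves both frameworks into standard position via isometries of $\F$, uses the standard-position constraints to pin down some entries of the affine map, and then eliminates the remaining degenerate possibilities by a transcendence-degree count (via Lemma \ref{lem:q-g<=>}, $\td[\bQ(r,p_0):\bQ(r)]=2n-\ell$), which is exactly where the hypothesis $n\geq 5$ is consumed. You instead keep $q(v_i)=Ap(v_i)+t$ as given by Lemma \ref{lem:1} and observe that subtracting the surface equations at each vertex produces a single homogeneous linear system $P\phi_1+Q\phi_2+R\phi_3=0$ in the three coefficients $P,Q,R$ built from the entries of $A$; showing the $n\times 3$ coefficient matrix has rank $3$ at a generic configuration (by exhibiting a witness configuration where a $3\times 3$ minor is nonzero, the same device the paper uses in the footnote to Lemma \ref{lem:1ext}) forces $P=Q=R=0$ and hence orthogonality of the relevant block of $A$ directly, with the single edge-length equation then fixing $e=\pm 1$. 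Your computations of $P,Q,R$ for each surface are right, the witness configurations exist (e.g.\ points on the coordinate axes), and the conclusion that an orthogonal $A$ together with a $z$-axis translation is a global isometry of $\bR^3$ immediately gives congruence. What your approach buys: it is uniform across $\Y$, $\C$ and $\E$, avoids the standard-position bookkeeping and Lemma \ref{lem:q-g<=>} entirely, and in fact only needs $n\geq 3$ together with the existence of at least one edge (an assumption the paper's proof also makes implicitly when it takes $v_1v_2\in E$ without loss of generality). What the paper's approach buys is an explicit identification of the residual maps as reflections, and consistency with the transcendence-degree technique it uses elsewhere. The only point you should make fully explicit in a final write-up is the witness-minor computation for each of the three surfaces, but as you say this is routine.
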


\begin{proof}
We may use the isometries of
$\F$ to move $(G,p)$ and $(G,q)$ to two frameworks $(G,p_0)$ and
$(G,q_0)$ in standard position on $\F$. Then $(G,q_0)$ will be an
$\F$-affine image of $(G,p_0)$.
Let $p_0(v_i)=(x_i,y_i,z_i)$ and $q_0(v_i)=(\hat{x}_i,\hat{y}_i,\hat{z}_i)$.
We will analyse each choice of $\F$ in turn.

\textbf{Case 1}: $\F=\Y$. We have
\begin{equation}\label{eqn:affine2}q_0(v_i)=
\begin{bmatrix} a & b & 0\\ c & d & 0\\ 0 & 0 & e \end{bmatrix} \cdot p_0(v_i) + \begin{bmatrix} 0 \\ 0 \\ f\end{bmatrix} \mbox{ for all $1\leq i\leq n$.}\end{equation}
Applying Equation (\ref{eqn:affine2}) with $q_0(v_1)=(0,y_1,0)=p_0(v_1)$ ($\hat{y}_1=y_1$ since $(G,p)$ and $(G,q)$ are on $S$) reveals that $b=0=f$ and $d=1$.
For $i=2,3,\dots,n$, Equation (\ref{eqn:affine2}) now gives
\[ \begin{bmatrix} \hat{x}_i \\ \hat{y}_i \\ \hat{z}_i\end{bmatrix}=q_0(v_i)=\begin{bmatrix} a & 0 & 0\\ c & 1 & 0\\ 0 & 0 & e\end{bmatrix}\begin{bmatrix} x_i \\ y_i \\ z_i\end{bmatrix} = \begin{bmatrix} ax_i \\ cx_i+y_i \\ ez_i\end{bmatrix}. \]
Using the fact $q_0(v_i)$ and $p_0(v_i)$ are on $\Y_i$ we deduce
that
\begin{equation}\label{eqn:1}
(a^2-1+c^2)x_i^2+2cx_iy_i=0.
\end{equation}

Suppose $c\neq 0$. Then we have
\begin{equation*}
y_i=\frac{(1-a^2-c^2)x_i}{2c},
\end{equation*}
and
\[ r_i=x_i^2 + y_i^2= x_i^2+ \frac{(1-a^2-c^2)^2x_i^2}{4c^2}. \]
These equations imply that $x_i,y_i \in \overline{\bQ(r,a,c)}$. We may
now deduce that
$$\td [\bQ(r,p_0):\bQ(r)]\leq \td [\overline{\bQ(r,z_2,z_3,\dots,z_n,a,c)}:\bQ(r)]\leq n+2.$$
Since $n\geq 5$, this contradicts the fact that $\td
[\bQ(r,p_0):\bQ(r)]=2n-2$
by Lemma \ref{lem:q-g<=>}. Hence $c=0$.

Equation (\ref{eqn:1}) and the fact that $c=0$ implies $a=\pm 1$. It remains to show that $e=\pm 1$.
We may assume, without loss of generality, that $v_1v_2 \in E$. Then
\begin{eqnarray*} x_2^2+(y_1-y_2)^2 +z_2^2 &=& \|(0,y_1,0)-(x_2,y_2,z_2)\|^2 = \| p_0(v_1)-p_0(v_2)\|^2 \\ &=& \| q_0(v_1)-q_0(v_2)\|^2 = \|(0,y_1,0)-(\hat{x}_2,\hat{y}_2,\hat{z}_2)\|^2 \\ &=& \|(0,y_1,0)-A(x_2,y_2,z_2)\|^2 = \| (0,y_1,0)-(\pm x_2,y_2,ez_2)\|^2 \\ &=& x_2^2 + (y_1-y_2)^2 +e^2z_2^2. \end{eqnarray*}
Hence $z_2^2=e^2z_2^2$ and $e=\pm 1$.

We have shown that, if $q_0\neq p_0$, then $(G,q_0)$ is a reflection of $(G,p_0)$ in a plane which contains $(0,y_1,0)$ and either contains, or is perpendicular to, the $z$-axis or a composition thereof.
Hence $(G,p_0)$ and $(G,q_0)$ are congruent. This implies that $(G,p)$ and $(G,q)$ are congruent.
\newline

\textbf{Case 2}: $\F=\C$. We have
\begin{equation}\label{eqn:affine23}q_0(v_i)=
\begin{bmatrix} a & b & 0\\ c & d & 0\\ 0 & 0 & e \end{bmatrix} \cdot p_0(v_i) \mbox{ for all $1\leq i\leq n$.}\end{equation}
Since $p_0(v_1)=(0,y_1,z_1)$, $q_0(v_1)=(0,\hat{y}_1,\hat{z}_1)$, $y_1^2=r_1z_1^2$ and $\hat{y}_1^2=r_1\hat{z}_1^2$ applying Equation (\ref{eqn:affine23}) shows that $b=0$ and $d=e$. For $i=2,3,\dots,n$, we have

\[ \begin{bmatrix} \hat{x}_i \\ \hat{y}_i \\ \hat{z}_i\end{bmatrix}=q_o(v_i)=\begin{bmatrix} a & 0 & 0\\ c & d & 0\\ 0 & 0 & d\end{bmatrix}\begin{bmatrix} x_i \\ y_i \\ z_i\end{bmatrix} = \begin{bmatrix} ax_i \\ cx_i+dy_i \\ dz_i\end{bmatrix}. \]
Using the fact $q_0(v_i)$ and $p_0(v_i)$ are on $\C_i$ we deduce
that
\begin{equation}\label{eqn:12}
(a^2-1+c^2)x_i^2+2cdx_iy_i + (d^2-1)y_i^2-r_i(d^2-1)z_i^2=0.
\end{equation}

Suppose $d^2\neq 1$. Then
\begin{equation*}
z_i^2=\frac{(a^2-1+c^2)x_i^2+2cdx_iy_i + (d^2-1)y_i^2}{r_i(d^2-1)}.
\end{equation*}
Since
\[ x_i^2 + y_i^2= r_iz_i^2= \frac{(a^2-1+c^2)x_i^2+2cdx_iy_i + (d^2-1)y_i^2}{d^2-1},\]
this implies $x_i,z_i \in \overline{\bQ(r,a,c,d,y_i)}$.
We may now deduce that
$$\td [\bQ(r,p_0):\bQ(r)]\leq \td [\overline{\bQ(r,y_1,y_2,y_3,\dots,y_n,a,c,d)}:\bQ(r)]\leq n+3.$$
Since $n\geq 5$, this contradicts the fact that $\td [\bQ(r,p_0):\bQ(r)]=2n-1$, by Lemma \ref{lem:q-g<=>}.
Hence $d^2=1$. Substituting $d^2=1$ into Equation (\ref{eqn:12}) gives
\begin{equation}\label{eqn:123}
(a^2-1+c^2)x_i^2+2cdx_iy_i=0.
\end{equation}
Similar arguments to those used in Case 1 can now be applied to
deduce $c=0$ and $a=\pm 1$.

We have shown that, if $q_0\neq p_0$, then $(G,q_0)$ is a reflection of $(G,p_0)$ in the plane containing $(0,y_1,z_1)$ and the $z$-axis, or a rotation by $\pi$ around the $x$-axis, or a composition thereof.
Hence $(G,p_0)$ and $(G,q_0)$ are congruent. This implies that $(G,p)$ and $(G,q)$ are congruent.
\newline
\textbf{Case 3}: $\F=\E$. We have
\begin{equation}\label{eqn:affine234}q_0(v_i)=
\begin{bmatrix} a & 0 & 0\\ 0 & d & 0\\ 0 & 0 & e \end{bmatrix} \cdot p_0(v_i) \mbox{ for all $1\leq i\leq n$.}\end{equation}
Since $p_0(v_i)$ and $q_0(v_i)$ both lie on $\E_i$, we have $x_i^2 + \alpha y_i^2 + \beta z_i^2 = r_i$ and $a^2 x_i^2 + \alpha d^2 y_i^2 + \beta e^2 z_i^2 = r_i$. We can eliminate $x_i^2$ from these equations to obtain
\begin{equation}\label{eqn:yyy}
r_i(a^2-1)+\alpha y_i^2(d^2-a^2) + \beta z_i^2(e^2-a^2)=0.
\end{equation}
Hence, if $d^2-a^2 \neq 0$, then $x_i,y_i \in \overline{\bQ(r,a,d,e,z_i)}$. This would imply that
\[
2n = td [\bQ(r,p_0):\bQ(r)] \leq td [\overline{\bQ(r,a,d,e,z_1,z_2,\dots, z_n)}:\bQ(r)] \leq n+3,
\]
a contradiction since $n\geq 5$. Hence $d^2=a^2$. We can deduce similarly, from Equation (\ref{eqn:yyy}), that $a^2=e^2$. Equation (\ref{eqn:yyy}) now implies that $a^2=1$.

We have shown that $(G,q_0)$ is a reflection of $(G,p_0)$ in either the plane $x=0,y=0$ or $z=0$ or a composition thereof.
Hence $(G,p_0)$ and $(G,q_0)$ are congruent. This implies that $(G,p)$ and $(G,q)$ are congruent.\end{proof}

Theorem \ref{thm:psdequiv}  and Lemma \ref{lem:global} immediately imply that a generic framework on $\F$ with a maximum rank positive semi-definite stress matrix is globally rigid. We believe that the same conclusion holds without the hypothesis that there is a positive semi-definite stress matrix.

\begin{con}\label{con:affine}
Suppose that $(G,p)$ is generic on $\F$ and that $(\omega,\lambda)$ is an equilibrium stress for $(G,p)$ with $\rank \Omega_{\F}(\omega,\lambda) = 3n-\mu$. Then $(G,p)$ is globally rigid on $\F$.
\end{con}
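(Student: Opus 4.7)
My plan is to follow the two-stage reduction already in place for Theorem~\ref{thm:psdequiv}: first argue that $(\omega,\lambda)$ is an equilibrium stress for every framework $(G,q)$ on $\F$ equivalent to $(G,p)$; then invoke Lemma~\ref{lem:1} to conclude that $(G,q)$ is an $\F$-affine image of $(G,p)$; and finally invoke Lemma~\ref{lem:global} to conclude that $(G,q)$ is congruent to $(G,p)$, establishing global rigidity. All the content of Conjecture~\ref{con:affine} new relative to Theorem~\ref{thm:psdequiv} lies in the first step, because the positive semi-definiteness hypothesis there was used exactly to turn the scalar identity
\[ E_{\omega,\lambda,\F}(q) = q\,\Omega_\F(\omega,\lambda)\,q^T = 0, \]
which follows from equivalence for any $q$ (as in the proof of Theorem~\ref{thm:psdequiv}), into the full vector identity $\Omega_\F(\omega,\lambda)\,q^T = 0$, i.e.\ into $(\omega,\lambda)$ being a stress for $(G,q)$.

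To remove the positive semi-definiteness, I would try to transplant Connelly's doubled-framework technique from the proof of Theorem~\ref{thm:Consufficiency} to the surface setting. Form the stacked configuration $\hat p\in\bR^{6n}$ with $\hat p(v_i) = (p(v_i),q(v_i))\in \F_i\times \F_i$, which lives in $\bR^6$ and satisfies the two surface equations $h_i(p(v_i))=0$ and $h_i(q(v_i))=0$ at each vertex. Equivalence of $(G,p)$ and $(G,q)$ on $\F$ means this doubled framework is equivalent to the trivially doubled $\hat p_0(v_i)=(p(v_i),p(v_i))$, and $(\omega,\lambda)$ extends to $\hat p_0$ in the obvious diagonal way. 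Energy considerations would then force $E_{\omega,\lambda,\F}(\hat p)=0$ on the doubled configuration, and combining this with the maximum rank hypothesis $\rank\Omega_\F(\omega,\lambda)=3n-\mu$ and the fact that $\coker\Omega_\F(\omega,\lambda)$ equals the row span of $C_\F(G,p)$ (Lemma~\ref{lem:coker}), one would try to force the rows of $C_\F(G,q)$ to lie in that same row span --- exactly the input needed for Lemma~\ref{lem:1}.

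The hard part, and the reason the conjecture remains open, is that $\Omega_\F(\omega,\lambda)$ is not ambient-dimension-free the way the Euclidean stress matrix is: it is genuinely a $3n\times 3n$ object whose diagonal blocks $\Omega+\Lambda$, $\Gamma$, $\Sigma$ are tied to a single copy of $\F\subset\bR^3$ and couple the three coordinate directions through $\alpha,\beta$ (for $\E$) or through $r$ (for $\C$). There is no obvious canonical lift of $\Omega_\F$ to a stress matrix on the doubled surface $\F\times\F\subset\bR^6$, so Connelly's linear-algebraic doubling does not transplant directly. Two ways forward seem plausible: either a case-by-case analysis for $\F\in\{\Y,\C,\E\}$ that generates enough auxiliary quadratic identities from $\Theta^\F(q)=\Theta^\F(p)=0$ and $F^G(q)=F^G(p)$ to substitute for positive semi-definiteness, or an algebro-geometric argument in the spirit of Gortler--Healy--Thurston~\cite{GHT} that identifies the irreducible component of the fibre of $F^{G,\F}$ through the generic point $p$ with the $\F$-affine orbit of $p$ by controlling its tangent space via the cokernel of $R_\F(G,p)$. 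In either route, the generic hypothesis on $(G,p)$ is essential to rule out accidental rank drops.
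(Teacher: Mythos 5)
The statement you are proving is Conjecture~\ref{con:affine}, which the paper explicitly leaves open, and your submission is a proof plan rather than a proof: you yourself say the key step is "the reason the conjecture remains open". Your outer reduction is exactly the one the paper records in its closing remarks -- Conjecture~\ref{con:affine} would follow from Lemma~\ref{lem:1} and Lemma~\ref{lem:global} once one knows that an equilibrium stress $(\omega,\lambda)$ for a generic $(G,p)$ is also an equilibrium stress for every equivalent framework $(G,q)$ on $\F$ -- and you correctly identify that this stress-transfer claim is the entire content of the conjecture beyond Theorem~\ref{thm:psdequiv}, where positive semi-definiteness is what upgrades the scalar identity $E_{\omega,\lambda,\F}(q)=0$ to criticality. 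But the transfer claim is left unestablished in your proposal, so there is a genuine gap and nothing here constitutes a proof of the conjecture.

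Concerning your two suggested routes: neither matches what the paper can actually do, and the first has a concrete obstruction beyond the one you acknowledge. The paper proves the conjecture only in the special case $\F\in\{\Y,\E\}$ with $p$ generic in $\bR^{3n}$ (so the radii $r_i$ are algebraically independent), Theorem~\ref{thm:global}, and the mechanism is not an energy or doubling argument but Connelly's Proposition~\ref{prop:Bobs3.3} applied to a modified rigidity map in which the parameters $r_i$ have been eliminated from the surface equations; the local diffeomorphism gives $R_{\F}(G,q)=R_{\F}(G,p)D$ and hence stress transfer directly. This elimination is impossible for the cone and the argument says nothing when the radii are algebraically dependent (e.g.\ a single cylinder), which is precisely why the conjecture is still open. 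The best general result is Theorem~\ref{thm:genstressPartial} (via Proposition~\ref{prop:bob3.3}, where the domain is the algebraic set $S_1\times\cdots\times S_n$): there the diffeomorphism controls only tangential directions, so only $\omega$ transfers and $\lambda$ may change to some $\lambda'$ -- and even then one would still need $\rank\Omega_{\F}(\omega,\lambda')=3n-\mu$ to feed Lemma~\ref{lem:1}, which is not known. Your doubling idea inherits the difficulty you name (the surface stress matrix is not of the coordinate-free form $\Omega\otimes I_3$, its three diagonal blocks being tied to the particular quadric), but note also that even in $\bR^d$ the generic sufficiency proof of Theorem~\ref{thm:Consufficiency} in \cite{C2005} does not proceed by doubling; it uses exactly the fibre-diffeomorphism argument above, and the doubling/averaging technique genuinely needs semi-definiteness or special structure. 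Your second suggestion, a Gortler--Healy--Thurston style analysis of the fibre of $F^{G,\F}$ through $p$, is a plausible direction but is not carried out here, so the missing idea -- a proof that the full stress (including its $\lambda$-component, or a maximum-rank substitute) transfers to all equivalent frameworks on a fixed surface $\F$ -- remains missing.
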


\section{A sufficient condition for global rigidity on families of cylinders and ellipsoids}
\label{sec:main}

We will show that Conjecture \ref{con:affine} holds when $\F\in \{\Y,\E\}$ and the parameters $r_1,r_2,\dots,r_n$ are algebraically independent over $\mathbb{Q}$. 
To do this we need to change our viewpoint from the surface $S\subset \bR^3$
to a point $p\in \bR^{3n}$.

Given a map $p:V\to \bR^{3n}$, there is a unique family of concentric surfaces $\F$ with $p(v_i)\in \F_i$
for each $\F\in \{\Y,\C,\E\}$, as long as $p(v_i)$ does not lie on the $z$-axis for all $1\leq i\leq n$ when
$\F\in \{\Y,\C\}$ and $p(v_i)\neq (0,0,0)$ for all $1\leq i\leq n$ when
$\F=\E$. We will refer to $S$ as the {\em surface induced by $p$} and denote it by $S^p$.

With this restriction we can use the following result, due to Connelly \cite{C2005},  to obtain the special case of Conjecture \ref{con:affine}.

\begin{prop}[\cite{C2005}]\label{prop:Bobs3.3}
Suppose that $f:\mathbb{R}^a\rightarrow \mathbb{R}^b$ is a function, where each coordinate is a polynomial with integer coefficients, $p\in \mathbb{R}^a$ is generic, and $f(p)=f(q)$, for some $q\in \mathbb{R}^a$. Then there are (open) neighbourhoods $N_p$ of $p$ and $N_q$ of $q$ in $\mathbb{R}^a$ and a diffeomorphism $g:N_q\rightarrow N_p$ such that for all $x\in N_q$, $f(g(x))=f(x)$, and $g(q)=p$.
\end{prop}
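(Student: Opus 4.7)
The plan is to deduce the proposition from the constant rank theorem, after first establishing that $df$ attains the same maximal rank at $p$ and at $q$.

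Let $r = \max_{x\in \bR^a}\rank df_x$ and $Y = \{x\in \bR^a : \rank df_x < r\}$. Then $Y$ is a proper Zariski-closed subset of $\bR^a$, cut out by the $r\times r$ minors of $df$, which are polynomials with integer coefficients. Since $p$ is generic it avoids $Y$, so $\rank df_p = r$. The crux of the argument is to show that $\rank df_q = r$ as well. To do this, I would note that $f(Y)$ is $\bQ$-semi-algebraic (by Tarski--Seidenberg), has Lebesgue measure zero (by Sard's theorem), and is contained in $f(\bR^a)$, which has dimension $r$; hence $\dim f(Y) \leq r-1$. Let $W$ be the Zariski closure of $f(Y)$, a proper $\bQ$-algebraic subvariety of the Zariski closure of $f(\bR^a)$. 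On $\bR^a\setminus Y$ the map $f$ has constant rank $r$, so every fibre has local dimension $a-r$; consequently $\dim f^{-1}(W) \leq \dim W + (a - r) < a$, which means $f^{-1}(W)$ is contained in a proper $\bQ$-subvariety of $\bR^a$. Since $p$ is generic it avoids this subvariety, so $f(p) \notin f(Y)$, and since $f(q) = f(p)$ this forces $q \notin Y$.

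With $\rank df_p = \rank df_q = r$ in hand, I would finish by invoking the constant rank theorem at both points. There exist open neighbourhoods $U_p\ni p$ and $U_q\ni q$ contained in $\bR^a\setminus Y$, a common open neighbourhood $W_0$ of $f(p)=f(q)$ in $\bR^b$, a diffeomorphism $\psi\colon W_0\to \psi(W_0)$ sending $f(p)$ to $0$, and diffeomorphisms $\phi_p\colon U_p\to \phi_p(U_p)\subset \bR^a$ and $\phi_q\colon U_q\to \phi_q(U_q)\subset \bR^a$ sending $p$ and $q$ respectively to $0$, such that $\psi\circ f\circ \phi_p^{-1}$ and $\psi\circ f\circ \phi_q^{-1}$ both coincide with the projection $(x_1,\dots,x_a)\mapsto(x_1,\dots,x_r,0,\dots,0)$ on a common neighbourhood of the origin. (A short calculation shows that the two target-side charts produced by the rank theorem may be replaced by a single $\psi$, at the cost of composing $\phi_q$ with a diffeomorphism of $\bR^a$ preserving $\bR^r\times\{0\}$.) Setting $g = \phi_p^{-1}\circ \phi_q$ on a sufficiently small neighbourhood $N_q$ of $q$ and $N_p = g(N_q)$, one checks directly that $g\colon N_q \to N_p$ is a diffeomorphism with $g(q)=p$ and $f\circ g = f$ on $N_q$.

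The main obstacle is the second step: ruling out that $q$ is a critical point of $f$ even though $p$ is not. This rests on combining the $\bQ$-algebraicity of $Y$ with Sard's theorem (to force $\dim f(Y) < r$) and a fibre-dimension estimate on the complement of the critical locus, so that $f^{-1}(f(Y))$ is contained in a proper $\bQ$-subvariety of $\bR^a$ and is therefore avoided by the generic point $p$. The remaining steps are routine applications of the rank filtration of $df$ and of the constant rank theorem.
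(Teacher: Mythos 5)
There is a genuine gap, and it sits exactly in the parenthetical you dismiss as ``a short calculation''. Arranging a \emph{single} target chart $\psi$ with $\psi\circ f\circ\phi_p^{-1}=\pi=\psi\circ f\circ\phi_q^{-1}$ (where $\pi$ is the projection onto $\bR^r\times\{0\}$) forces $\psi$ to carry both local images $f(U_p)$ and $f(U_q)$ onto neighbourhoods of $0$ in the same $r$-plane; it is therefore \emph{equivalent} to the assertion that the two image germs coincide near $f(p)=f(q)$. When $r<b$ this is not a consequence of the constant rank theorem: the two images are $r$-dimensional submanifold germs through a common point and may be distinct, in which case no $g$ with $f\circ g=f$ exists at all. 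For example, $f(t)=(t^2-1,t^3-t)$ has maximal rank $1$ at $t=\pm1$ and $f(1)=f(-1)$, yet the two local branches of the nodal cubic cross transversally, so chart-merging fails; since everything you have established before this step (maximal rank at both $p$ and $q$, equality of values) also holds in that example, the merging cannot follow from those facts alone, and genericity of $p$ must be invoked a second time precisely here. This local coincidence of images is the crux of Connelly's proposition. The paper's own treatment of the generalised statement (Proposition \ref{prop:bob3.3} in Appendix B) supplies it algebraically: $\td[\bQ(f(p)):\bQ]=r$, the intersection $f(U_{p})\cap f(U_{q})$ is a semi-algebraic set over $\bQ$ containing $f(p)$, so by Lemma \ref{cor:realvar} the cell through $f(p)$ has dimension $r$; one then restricts to the preimage of this common $r$-dimensional piece and glues with the inverse-function-theorem lemma (Lemma \ref{lem:2man}). (Equivalently, one can argue that both image germs are open in the Zariski closure of $f(\bR^a)$ near $f(p)$, which is a smooth point of that $r$-dimensional variety by genericity.) Some such input is indispensable; your proof as written begs the question at this point.

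Two smaller issues in your first step. The inference ``$f(Y)$ has Lebesgue measure zero by Sard, hence $\dim f(Y)\le r-1$'' is wrong as stated when $r<b$: then every value is a critical value in Sard's sense and $f(\bR^a)$ itself is null, so measure gives no information; the correct reason is that $\rank\, d(f|_{Y'})\le r-1$ on the smooth strata $Y'$ of $Y$, so the semi-algebraic image has dimension at most $r-1$. Also, ``contained in a proper $\bQ$-subvariety'' needs a word: the Zariski closure of the $\bQ$-semi-algebraic set $f^{-1}(W)$ is a priori only defined over a finite extension of $\bQ$, though a point with algebraically independent coordinates over $\bQ$ still avoids it. Both are fixable, but note that the route followed in the paper (via \cite{GHT}, Lemma 2.7 and Proposition 2.32) obtains $\rank df_q=r$ in one step, from the fact that $f(p)$ is a \emph{regular value} of $f$ whenever $p$ is generic, which is essentially what your first step re-proves.
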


\begin{thm}\label{thm:global}
Suppose $p$ is a generic point in $\mathbb{R}^{3n}$ and let $\F=\F^p$ for some $\F\in \{\Y,\E\}$. Let $(\omega,\lambda)$ be an equilibrium stress for $(G,p)$ on $\F$ and let $(G,q)$ be equivalent to $(G,p)$ on $\F$. Then $(\omega,\lambda)$ is an equilibrium stress for $(G,q)$ on $\F$. Furthermore, if $\rank \Omega_{\F}(\omega,\lambda) = 3n-\mu$, then $(G,p)$ is globally rigid on $\F$.
\end{thm}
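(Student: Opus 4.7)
My plan is to adapt Connelly's argument from \cite{C2005} to the surface setting: use Proposition \ref{prop:Bobs3.3} to produce a local diffeomorphism $g$ between neighbourhoods of $q$ and $p$ that preserves a suitable polynomial version of the $\F$-rigidity map, and then transport equilibrium stresses from $(G,p)$ to $(G,q)$ by differentiating this identity. Once the stress transfer is in hand, Lemmas \ref{lem:1} and \ref{lem:global} deliver global rigidity.

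First I would replace $F^{G,\F}$ by a closely related polynomial map $\tilde{F}^{G,\F}\colon\bR^{3n}\to\bR^{m+n}$ whose first $m$ coordinates are the squared edge lengths, and whose last $n$ coordinates record the value of $x^2+y^2$ (for $\F=\Y$) or $x^2+\alpha y^2+\beta z^2$ (for $\F=\E$) at each $q(v_i)$; clearing denominators (using $\alpha,\beta\in\bQ$) yields integer coefficients. The identity $\tilde{F}^{G,\F}(q)=\tilde{F}^{G,\F}(p)$ captures equivalence on $\F=\F^p$ exactly, so Proposition \ref{prop:Bobs3.3} supplies neighbourhoods $N_p,N_q$ and a diffeomorphism $g\colon N_q\to N_p$ with $g(q)=p$ and $\tilde{F}^{G,\F}\circ g=\tilde{F}^{G,\F}$. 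Since the Jacobian of $\tilde F^{G,\F}$ at any point $x$ equals $2R_{\F}(G,x)$, the chain rule gives
\[ R_{\F}(G,p)\cdot dg_q = R_{\F}(G,q), \]
with $dg_q$ an invertible $3n\times 3n$ linear map. Reading an equilibrium stress $(\omega,\lambda)$ for $(G,p)$ as a row vector in the left null space of $R_{\F}(G,p)$ and right-multiplying by $dg_q$ immediately shows that $(\omega,\lambda)$ is also in the left null space of $R_{\F}(G,q)$, giving the first conclusion.

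For the global rigidity statement, genericity of $p$ in $\bR^{3n}$ gives $\td[\bQ(r,p):\bQ(r)]=2n$, so $(G,p)$ is generic on $\F^p$. A direct inspection of $C_{\F}(G,p)$ shows that for any such generic $p$, $\rank C_{\F}(G,p)=\mu$, so $(G,p)$ is fully realised. The hypothesis $\rank \Omega_{\F}(\omega,\lambda)=3n-\mu$ then puts us in the setting of Lemma \ref{lem:1}, which (using the transferred stress from the previous paragraph) forces $(G,q)$ to be an $\F$-affine image of $(G,p)$. Finally, Lemma \ref{lem:global} applied to the equivalent pair $(G,p),(G,q)$ promotes this affine relationship to congruence, proving global rigidity on $\F$.

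The substantive step is the stress transfer: there is no purely combinatorial reason why the cokernel of $R_{\F}(G,\cdot)$ should be preserved along the fibre $\{q:F^{G,\F}(q)=F^{G,\F}(p)\}$, and it is precisely the genericity of $p$ together with the polynomial-with-integer-coefficient form of $\tilde{F}^{G,\F}$ that makes Proposition \ref{prop:Bobs3.3} applicable and forces the Jacobian cokernels to match. This is also what limits the theorem to $\F\in\{\Y,\E\}$: for the cone $\C$, the surface equation $x^2+y^2-r_iz^2=0$ has the parameter $r_i$ multiplying a variable, so there is no natural integer-coefficient replacement for $\tilde{F}^{G,\C}$ to which Proposition \ref{prop:Bobs3.3} applies.
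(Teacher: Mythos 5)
Your proposal is correct and follows essentially the same route as the paper: replace $F^{G,\F}$ by the $r$-free polynomial map, invoke Proposition \ref{prop:Bobs3.3} to get the local diffeomorphism, differentiate to obtain $R_{\F}(G,q)=R_{\F}(G,p)\,dg_q$ and hence the stress transfer, then conclude via Lemmas \ref{lem:1} and \ref{lem:global}. Your remarks about clearing denominators for $\alpha,\beta\in\bQ$ and about why the argument fails for the cone match (and slightly sharpen) the paper's own discussion.
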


\begin{proof}
Let $F:\bR^{3n}\to \bR^{m+n}$ be the modified surface rigidity map defined by $F(p)=(F^G(p),\hat\Theta^\S(p)$ where $F^G$ is the rigidity map for $G$, $\hat\Theta^\S(p)=( k(p(v_1),\ldots,k(p(v_n))$, $k(x,y,z)=x^2+y^2$ when $\F=\C$ and $k(x,y,z)=x^2+\alpha y^2+\beta z^2$ when $\F=\E$.  By Proposition \ref{prop:Bobs3.3} there exist open
neighbourhoods $N_{p}$ of $p$ and $N_{q}$ of $q$ in $\mathbb{R}^{3n}$ and
a diffeomorphism $g:N_{q}\to N_{p}$ such that $g(q)=p$ and,
for all $q\in N_{q}$, $f(g( q))=f(q)$.
Taking differentials at
$q$, and using the fact that the Jacobian matrix of $F$ evaluated at $p$ is $2R_{\F}(G,p)$, we obtain $R_{\F}(G,q)=R_{\F}(G,p)\,D$ where $D$ is the Jacobian matrix of $g$ at $q$.
Since $(\omega,\lambda)$ is an
equilibrium stress for $(G,p)$ we have $(\omega,\lambda)\,
R_{\F}(G,q)=(\omega,\lambda)\, R_{\F}(G,p)D= 0\,D=0$. 
Hence $(\omega,\lambda)$ is an equilibrium stress of $(G,q)$.

Since $(G,p)$ is generic, it is fully realised on $\F$. We can now use Lemma \ref{lem:1} and the hypothesis that
$\rank \Omega_{\F}(\omega,\lambda) = 3n-\mu$
to deduce that
$(G,q)$ is an $\F$-affine image of $(G,p)$. Lemma \ref{lem:global} now tells us that $(G,q)$ is congruent to $(G,p)$. Hence $(G,p)$ is globally rigid.
\end{proof}

The above proof works for families of cylinders and ellipsoids because we can eliminate the parameters $r_i$ from their defining equations without changing their Jacobian matrix. This is not possible for families of cones. We will discuss this further in Section \ref{sec: Closing Remarks}.

\section{1-extensions and global rigidity}
\label{sec:1ext}

Given a graph $G$, the {\em $1$-extension operation} constructs a
new graph by first deleting an edge $v_1v_2$ and then adding a
new vertex $v_0$ and three new edges $v_0v_1,v_0v_2,v_0v_3$ for some
vertex $v_3$ distinct from $v_1,v_2$. Our aim is to show that the
property of having a maximum rank stress matrix is preserved by the
1-extension operation.

\begin{lem}\label{lem:1ext}
Suppose $(G,p)$ is a generic framework on $S$ with $n\geq 3$.
Let $G'=(V',E')$ be a 1-extension of  $G$,
obtained by deleting an edge $e=v_1v_2$ and adding a new vertex $v_0$ and new edges
$v_0v_1, v_0v_2,v_0v_3$.
Then there exists a map $q:V'\to \bR^3$ such that $\rank R_{S^q}(G',q)=\rank R_{S}(G,p)+3$.
Furthermore, if $(\omega,\lambda)$ is an equilibrium stress for $(G,p)$ on $S$ and $\omega_e\neq 0$, then
there exists an equilibrium stress $(\omega',\lambda')$ for $(G',q)$ on $S^q$ such that  $\rank
\Omega_{\F^q}(\omega',\lambda')=rank \Omega_{\F}(\omega,\lambda)+3$.
\end{lem}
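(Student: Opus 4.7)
The plan is to place $v_0$ on the line through $p(v_1)$ and $p(v_2)$ and to redistribute the stress on $e$ along the two new edges $v_0v_1$ and $v_0v_2$. Define $q\colon V'\to\bR^3$ by $q(v_i)=p(v_i)$ for $v_i\in V$ and $q(v_0)=(1-t)p(v_1)+tp(v_2)$ for a generically chosen $t\in\bR\setminus\{0,1\}$. The induced family $S^q$ coincides with $S$ on the components indexed by $V$ and contains a new component through $q(v_0)$; genericity of $t$ (together with genericity of $p$) ensures that $q(v_0)$ avoids the $z$-axis when $S\in\{\Y,\C\}$ and avoids the origin when $S=\E$, so $S^q$ is well defined.

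For the stress, set $\omega'_f=\omega_f$ for all edges $f\in E\setminus\{e\}$, $\omega'_{v_0v_1}=\omega_e/t$, $\omega'_{v_0v_2}=\omega_e/(1-t)$, $\omega'_{v_0v_3}=0$, $\lambda'_i=\lambda_i$ for $v_i\in V$, and $\lambda'_0=0$. The collinearity identities $p(v_1)-q(v_0)=t(p(v_1)-p(v_2))$ and $p(v_2)-q(v_0)=(1-t)(p(v_2)-p(v_1))$, together with the original equilibrium equation (\ref{eq:stressdefn}) for $(\omega,\lambda)$, show that $(\omega',\lambda')$ satisfies (\ref{eq:stressdefn}) at $v_1$ and $v_2$ (the contributions of the new edges $v_0v_1,v_0v_2$ balance those of the deleted edge $e$) and at $v_0$ (where the chosen weights on $v_0v_1$ and $v_0v_2$ exactly cancel); equilibrium at $v_3$ and at all other vertices is immediate.

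The main computation is for the rank of the stress matrix. I will show that for each of the three diagonal blocks, the new block $M'$ is related to the corresponding block $M$ of $\Omega_S(\omega,\lambda)$ by the rank-one update
\[
M'=\begin{pmatrix}M & 0\\ 0 & 0\end{pmatrix}+\frac{\omega_e}{t(1-t)}\,\tilde u\,\tilde u^T,
\]
where $\tilde u=(u,-1)^T\in\bR^{n+1}$ and $u\in\bR^n$ has entries $1-t$ at $v_1$, $t$ at $v_2$, and $0$ elsewhere. This formula holds uniformly across the blocks $\Omega+\Lambda$, $\Gamma$, and $\Sigma$ because the choice $\lambda'_0=0$ forces the $v_0$-diagonal entry of every block to equal $\omega_e/(t(1-t))$, independently of surface type. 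Since $\tilde u$ has $-1$ in its last coordinate while the last row and column of $M\oplus 0$ vanish, $\tilde u$ does not lie in the column space of $M\oplus 0$; hence $\rank M'=\rank M+1$, and summing over the three blocks yields $\rank\Omega_{S^q}(\omega',\lambda')=\rank\Omega_S(\omega,\lambda)+3$.

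For the rigidity matrix, $R_{S^q}(G',q)$ differs from $R_S(G,p)$ by the removal of the row for $e$, the addition of three new edge rows (for $v_0v_1,v_0v_2,v_0v_3$) and one new vertex row (for $v_0$), and the addition of three new columns. Column-by-column inspection shows that for generic $t$ the four new rows are linearly independent and their projections onto the new columns span $\bR^3$. The collinearity of $q(v_0),q(v_1),q(v_2)$ gives the identity $(1-t)\cdot\text{row}(v_0v_1)+t\cdot\text{row}(v_0v_2)=t(1-t)\cdot\text{row}(e)$, which vanishes in the new columns and equals a scalar multiple of the row for $e$ in the old columns. This row lies in the row span of $R_S(G-e,p)$ precisely when there is an equilibrium stress for $(G,p)$ with $\omega_e\neq 0$; in that case $\rank R_S(G-e,p)=\rank R_S(G,p)$ and the rank grows by exactly $4-1=3$, while in the opposite case $\rank R_S(G-e,p)=\rank R_S(G,p)-1$ and the rank grows by $4$ relative to $R_S(G-e,p)$. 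Either way, $\rank R_{S^q}(G',q)=\rank R_S(G,p)+3$. The main obstacle is the uniform verification of the rank-one update formula across the three surface types; the crucial observation is that the choice $\lambda'_0=0$ eliminates any dependence on $r$, $\alpha$, or $\beta$ at the new vertex.
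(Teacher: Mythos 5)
Your construction is essentially the paper's: you place $v_0$ on the segment $p(v_1)p(v_2)$ (the paper takes $t=\tfrac12$, so $\omega'_{v_0v_1}=\omega'_{v_0v_2}=2\omega_e$), set $\lambda'_0=0$ and $\omega'_{v_0v_3}=0$, and show each diagonal block gains rank one — your symmetric rank-one update $\frac{\omega_e}{t(1-t)}\tilde u\tilde u^{T}$ is exactly what the paper's row/column operations diagonalise away, and your rigidity-matrix count uses the same collinearity dependency among the rows for $v_0v_1,v_0v_2,v_1v_2$. The one step you assert rather than prove is that the new rows restricted to the three new columns have rank $3$ (equivalently, that $s_0(q(v_0))$, $p(v_1)-p(v_2)$ and $q(v_0)-p(v_3)$ are linearly independent); this is a genericity claim about $p$, not about $t$, and the paper justifies it by exhibiting explicit points of $S(r_1)\times S(r_2)\times S(r_3)$ at which the relevant $3\times3$ determinant is nonzero — you should supply such a witness (or an equivalent argument) to close this gap, after which your proof is complete.
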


\begin{proof}
Define $(G',q)$ by putting $q(v)=p(v)$ for all $v\in V$ and
$q(v_0)=\frac{1}{2}p(v_1)+\frac{1}{2}p(v_2)$.
Let $S^q$ be the surface induced
by $q$.

We first consider the framework $(G'+v_1v_2-v_0v_2,q)$ on $S^q$. Its
rigidity matrix $R$ can be constructed from $R_\F(G,p)$ by adding 3
new columns indexed by $v_0$, and 3 new rows indexed by $v_0$,
$v_0v_1$ and $v_0v_3$, respectively. Since $(p(v_1),p(v_2),p(v_3))$
is a generic point on $S(r_1)\times S(r_2)\times S(r_3)$,
the $3\times 3$ submatrix $M$ of $R$ with rows indexed by $v_0,
v_0v_1,v_0v_3$ and columns indexed by $v_0$ is
non-singular.\footnote{We can consider  $\det M$ as a polynomial in
the coordinates of $(p(v_1),p(v_2),p(v_3))$. If $\det M=0$, then
genericness would imply that this polynomial evaluates to $0$ at all
points in $S(r_1)\times S(r_2)\times S(r_3)$. It is straightforward
to show that this is not the case by finding points
$(p_1,p_2,p_3)\in S(r_1)\times S(r_2)\times S(r_3)$ at which the
polynomial is nonzero. When $\F=\Y(r)$, we can take
$p_1=(\sqrt{r_1},0,0)$, $p_2=(0,\sqrt{r_2},0)$ and
$p_3=(\sqrt{r_3},0,1)$; when $\F=\C(r)$, we can take
$p_1=(\sqrt{r_1},0,1)$, $p_2=(\sqrt{r_2},0,-1)$ and
$p_3=(0,\sqrt{r_3},1)$; and when $\F=\E(r)$, we can take
$p_1=(\sqrt{r_1},0,0)$, $p_2=(0,\frac{\sqrt{r_2}}{\sqrt{\alpha}},0)$ and
$p_3=(0,0,\frac{\sqrt{r_3}}{\sqrt{\beta}})$.}
The fact that the new columns contain zeros everywhere except in the new rows
now gives $\rank R=\rank R_\F(G,p)+3$.
Since $q(v_0),q(v_1)$ and $q(v_2)$ are collinear, the rows in $R_{\F^q}(G'+v_1v_2,q)$ corresponding to
$v_0v_1,v_0v_2,v_1v_2$ are a minimal linearly dependent set. Thus
$$\rank R_{S^q}(G',q)=\rank R_{S^q}(G'+v_1v_2,q)=\rank R=\rank R_{S}(G,p)+3.$$

Let $({\omega'},{\lambda'})$ be the stress for $({G'},{q})$ on
${\F^q}$ defined by putting $\omega'_f=\omega_f$ for all $f \in
E-e$, ${\omega'}(v_1v_{0})=2\omega_e$,
${\omega'}(v_2v_{0})=2\omega_e$, ${\lambda'}(v)=\lambda(v)$ for all
$v \in V$ and ${\lambda'}(v_{0})=0$. It is straightforward to verify
that $({\omega'},{\lambda'})$ is an equilibrium stress for
$({G'},{q})$ on ${\F^q}$. Let $\omega_{ij}$ be the $ij$-th entry of
$\Omega(\omega')$ for $i\neq j$ and $\lambda_i$ be the $ii$-th entry
of $\Lambda(\lambda')$.

We first consider $\Omega({\omega'}) + \Lambda({\lambda'})$. We have

\[ \Omega({\omega'})+ \Lambda({\lambda'})=\begin{bmatrix}
4\omega_{12} & -2\omega_{12} & -2\omega_{12} & 0 & \dots & 0\\
-2\omega_{12} & \sum_j \omega_{1j} + \omega_{12}+\lambda_1 & 0 & -\omega_{13} & \dots & -\omega_{1n}\\
-2\omega_{12} & 0 & \sum_j \omega_{2j} + \omega_{12}+\lambda_2 &
-\omega_{23} & \dots & -\omega_{2n} \\ 0 & -\omega_{13} &
-\omega_{23} &  \dots & \dots & -\omega_{3n} \\ \vdots & \vdots &
\vdots &  &  & \vdots
\end{bmatrix}. \] By adding $1/2$ times the first row to
the second and third rows, respectively, this reduces to
\[\begin{bmatrix}
4\omega_{12} & -2\omega_{12} & -2\omega_{12} & 0 & \dots & 0\\
0 & \sum_j \omega_{1j}+\lambda_1 & -\omega_{12} & -\omega_{13} &
\dots & -\omega_{1n}\\ 0 & -\omega_{12} & \sum_j
\omega_{2j}+\lambda_2 & -\omega_{23} & \dots & -\omega_{2n} \\
\vdots & \vdots & \vdots & \vdots & & \vdots \\  \end{bmatrix}. \]
Now adding $1/2$ times the first column to the second and third
columns, respectively, gives
\[\begin{bmatrix}
4\omega_{12} & 0 & 0 & 0 & \dots & 0\\0 & \sum_j
\omega_{1j}+\lambda_1 & -\omega_{12} & -\omega_{13} & \dots &
-\omega_{1n}\\ 0 & -\omega_{12} & \sum_j \omega_{2j}+\lambda_2 &
-\omega_{23} & \dots & -\omega_{2n} \\ \vdots & \vdots & \vdots &
\vdots & & \vdots \\ \end{bmatrix} =
\begin{bmatrix} 4\omega_{12} & 0 & \dots & 0\\ 0 & \\ \vdots & & \Omega(\omega) + \Lambda(\lambda)\\ 0 \end{bmatrix}. \]
Since $\omega_{12}\neq 0$, we have
 $\rank \Omega({\omega'})+ \Lambda({\lambda'})=\rank \Omega(\omega)+ \Lambda(\lambda)+1$.

Since $\lambda'(v_0)=0$, we can repeat the above
argument for $\Omega({\omega'})$ when $\F=\Y$, for $\Omega({\omega'})-\Delta({\lambda'})$
when $\F=\C$, and for both $\Omega({\omega'})+\alpha\Lambda({\lambda'})$ and
$\Omega({\omega'})+\beta\Lambda({\lambda'})$ when $\F=\E$, to deduce that
$\rank \Omega_{{\F^q}}({\omega'},{\lambda'})=  \rank \Omega_{\F}(\omega,\lambda)+3$.
\end{proof}

We do not know whether we can find a framework $(G',q)$ which satisfies the conclusions of Lemma \ref{lem:1ext} and in addition has $\F^q=\F$.\footnote{Partial results are known for particular surfaces: there exists a framework $(G,q)$ with $\rank R_{\F^q}(G',q)=\rank R_{S}(G,p)+3$ and $\F^q=\F$ when $\F=\Y$ \cite{NOP}, and when $\F=\C(1)$ or $\F=\E(1)$ \cite{NOP2}.} Lacking such a result, we are forced
to consider frameworks on `generic surfaces' i.e. surfaces $S^q$ induced by some generic $q\in \bR^{3n}$.

\begin{lem}\label{lem:genrank} Suppose $(G,p)$ is an infinitesimally rigid framework on some surface $\F$. Then $(G,q)$ is infinitesimally rigid on $\F^q$ for all generic
$q\in \bR^{3n}$.
\end{lem}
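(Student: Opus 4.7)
The plan is to interpret infinitesimal rigidity on $\F^q$ as a Zariski-open condition on $q\in \bR^{3n}$ and exploit the fact that it holds at the specific point $p$. Recall that, by Theorem \ref{thm:surfacerank}, $(G,q)$ is infinitesimally rigid on $\F^q$ exactly when $\rank R_{\F^q}(G,q)=3n-\ell$; and since the $\ell$-dimensional space of infinitesimal isometries of $\F^q$ is always in the kernel of $R_{\F^q}(G,q)$, the value $3n-\ell$ is simultaneously an upper bound on the rank for every $q$. So it suffices to show that this maximum possible rank is attained at every generic $q$.

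First I would verify that, after a harmless row-scaling in the cone case, the entries of $R_{\F^q}(G,q)$ can be regarded as polynomials in the $3n$ coordinates of $q$ with integer coefficients. This is immediate from the definitions for $\F\in\{\Y,\E\}$, since the formulas for $s_i$ in Equation (\ref{eq:si}) involve only the coordinates of $q(v_i)$ and the fixed rationals $\alpha,\beta$. For $\F=\C$, the third entry of $s_i$ is $-r_iz_i=-(x_i^2+y_i^2)/z_i$; multiplying the row of $S(G,q)$ indexed by $v_i$ by $z_i$ absorbs this rational expression into a polynomial, without changing the rank of the matrix, provided $z_i\neq 0$ (which is the case for generic $q$).

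Next, since $(G,p)$ is infinitesimally rigid on $\F=\F^p$, the rigidity matrix $R_{\F^p}(G,p)$ has rank $3n-\ell$, so some $(3n-\ell)\times(3n-\ell)$ minor $D$ of the polynomial version of this matrix is nonzero when evaluated at $p$. In particular, $D$ is a nonzero polynomial in the coordinates of $q$ with integer coefficients. By the definition of genericness, the coordinates of any generic $q\in\bR^{3n}$ are algebraically independent over $\bQ$, so $D(q)\neq 0$. This forces $\rank R_{\F^q}(G,q)\geq 3n-\ell$, hence equality, and Theorem \ref{thm:surfacerank} then gives the infinitesimal rigidity of $(G,q)$ on $\F^q$.

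I do not anticipate any significant obstacle. The only point requiring a little care is the case $\F=\C$: one must verify both that the scaling used to clear denominators in $S(G,q)$ is valid (i.e.\ each $z_i\neq 0$) and that $\F^q$ is a genuine family of cones (i.e.\ no $q(v_i)$ equals the origin, so that the radius $r_i(q(v_i))$ is well-defined and positive). Both conditions are Zariski-open and therefore hold for any generic $q$, so the argument goes through uniformly.
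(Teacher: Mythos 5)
Your proof is correct and follows essentially the same route as the paper's: both express the entries of $R_{\F^q}(G,q)$ as rational (or, after clearing the $z_i$ denominator in the cone case, polynomial) functions of the coordinates of $q$, and conclude that the rank, which equals the maximum possible value $3n-\ell$ at $p$, is therefore attained at every generic $q$, so Theorem \ref{thm:surfacerank} applies. Your version merely makes explicit the nonvanishing-minor argument that the paper leaves implicit in the phrase ``its rank will be maximised when $q$ is a generic point''.
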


\begin{proof}
Choose $q:V\to \bR^3$ such that $q(v_i)$ does not lie on the $z$-axis for all $1\leq i\leq n$ when
$\F\in \{\Y,\C\}$ and $q(v_i)\neq (0,0,0)$ for all $1\leq i\leq n$ when
$\F=\E$. Since $q(v_i)\in S_i^q$ for all $1\leq i \leq n$,
the $\F^q$-rigidity matrix for $(G,q)$ has the form $R_{\F^q}(G,q)=\begin{bmatrix}R(G,q) \\S(G,q)\end{bmatrix}$ where $R(G,q)$ is the ordinary rigidity matrix
of $(G,q)$,
\[ S(G,q) = \begin{bmatrix} s_1 & 0 & \dots  &0  \\  0 & s_2 & \dots &0  \\ \vdots && \ddots & \vdots\\ 0 & 0 & \dots & s_n \end{bmatrix},
\]
and
\begin{equation}\label{eq:si_new}
s_i= \begin{cases}
 (x_i,y_i,0), & \text{if } \F^q=\Y^q; \\
 (x_i,y_i,-\frac{x_i^2+y_i^2}{z_i}), & \text{if }\F^q=\C^q; \\
 (x_i,\alpha y_i,\beta z_i), & \text{if }\F^q=\E^q.
\end{cases}
\end{equation}
The expression for $s_i$ when $\F^q=\C^q$ is obtained by substituting $r_i=(x_i^2+y_i^2)/z_i^2$ into Equation (\ref{eq:si}). Since the entries in $R_{\F^q}(G,q)$ are rational functions of $q$, its rank will be maximised when $q$ is a generic point in $\bR^{3n}$.
\end{proof}

The analogous result for frameworks with a maximum rank stress matrix is not true in general. It becomes true, however, if we restrict our attention to
infinitesimally rigid frameworks.

\begin{lem}\label{lem:genstress} Suppose $(G,p)$ is an infinitesimally rigid framework on  $\F$
and $\rank \Omega_{\F}(\omega,\lambda)= 3n-\mu$ for some equilibrium stress $(\omega,\lambda)$ of $(G,p)$. Then $(G,q)$ has an equilibrium stress $(\omega',\lambda')$
on $\F^q$ with
$\rank \Omega_{\F^q}(\omega',\lambda')= 3n-\mu$
for all generic
$q\in \bR^{3n}$. In addition, if $\F\in \{\Y,\E\}$, then $(\omega',\lambda')$ can be chosen so that $w'_e\neq 0$ for all
$e\in E$. 
\end{lem}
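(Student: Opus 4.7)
The plan is to prove the two conclusions separately: a semicontinuity-of-rank argument for the first, and a bootstrap through Theorem \ref{thm:global} for the second.

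For the \textbf{first claim}, I would parametrise the space of equilibrium stresses rationally in $q$. Lemma \ref{lem:genrank} tells us $(G,q)$ is infinitesimally rigid on $\F^q$ for every generic $q$, so $\coker R_{\F^q}(G,q)$ has constant dimension $d = m - 2n + \ell$ on a Zariski-open set $U$ containing $p$. Using Cramer-style pivoting (choose a maximal nonsingular submatrix of $R_{\F^q}(G,q)$ whose determinant is then a nonzero rational function of $q$), I would select a basis $B_1(q),\ldots,B_d(q)$ of this cokernel with coordinates rational in $q$ on $U$, and write a general equilibrium stress as $(\omega',\lambda') = \sum_i c_i B_i(q)$. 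The entries of $\Omega_{\F^q}(\omega',\lambda')$ are then rational in $(c,q)$, and the statement ``some $c$ yields rank $3n-\mu$'' is equivalent to some $(3n-\mu)\times(3n-\mu)$ minor being non-identically zero as a polynomial in $c$, which in turn is equivalent to one of its coefficients (a rational function of $q$) being nonzero. The hypothesis at $q=p$ supplies such a nonvanishing coefficient, so that rational function is nonzero on $U$ and in particular at every generic $q$. The matching upper bound $\rank \Omega_{\F^q}(\omega',\lambda') \leq 3n-\mu$ is Lemma \ref{lem:coker}, since $\rank C_{\F^q}(G,q) = \mu$ for generic $q$.

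For the \textbf{second claim}, when $\F \in \{\Y,\E\}$, I would bootstrap via Theorem \ref{thm:global}. Since $q$ is generic in $\bR^{3n}$ and, by the first part, $(G,q)$ has a maximum rank stress on $\F^q$, Theorem \ref{thm:global} gives that $(G,q)$ is globally rigid on $\F^q$; Theorem \ref{thm:redundant} (once $n \geq 7-\ell$) then yields that $(G,q)$ is redundantly rigid on $\F^q$. Redundant rigidity means that, for each edge $e$, the row of $R_{\F^q}(G,q)$ indexed by $e$ lies in the span of the remaining rows, so by standard linear algebra there is an equilibrium stress $(\omega^{(e)},\lambda^{(e)})$ with $\omega^{(e)}_e \neq 0$. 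Taking a maximum rank stress $(\omega^*,\lambda^*)$ from the first part, I would form the parametric family
\[
(\omega',\lambda') = (\omega^*,\lambda^*) + \epsilon \sum_{e\in E} t_e (\omega^{(e)},\lambda^{(e)})
\]
and check that for generic $(\epsilon,t_e)_{e\in E}$ every $\omega'_e$ is nonzero (the coefficient of the monomial $\epsilon t_e$ is $\omega^{(e)}_e \neq 0$), while $\Omega_{\F^q}(\omega',\lambda')$ retains rank $3n-\mu$ (a suitable maximal minor is a polynomial in $(\epsilon,t)$ nonzero at $\epsilon=0$).

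The main obstacle I expect is the rational bookkeeping in the first claim: one must ensure the basis of the cokernel is defined on a single Zariski-open set containing \emph{all} generic $q$, not merely a neighbourhood of $p$. This should go through cleanly by choosing pivots from a submatrix of $R_{\F^q}(G,q)$ whose determinant is a nonzero rational function of $q$, since the determinant is nonzero at $p$ and hence nonzero on a Zariski-open set covering every generic point.
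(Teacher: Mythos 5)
Your first claim is argued essentially as in the paper: parametrise the cokernel of $R_{\F^q}(G,q)$ by rational functions of $q$ together with linear parameters, note that the entries of the resulting stress matrix are rational in all parameters, and conclude that the rank is maximised at generic values, the hypothesis at $p$ supplying the lower bound $3n-\mu$ (with Lemma \ref{lem:coker} giving the matching upper bound). Your pivot bookkeeping is exactly the point the paper leaves implicit, and it works because a maximal submatrix chosen nonsingular at $p$ has determinant a not-identically-zero rational function over $\bQ$, hence is nonsingular at every generic $q$.

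Your second claim takes a genuinely different route, and as written it has a gap. The paper never invokes Theorem \ref{thm:redundant}: it chooses, among the maximum-rank stresses, one minimising the number of edges with $\omega'_e=0$; if $\omega'_e=0$ then the restriction of $(\omega',\lambda')$ to $E-e$ is a stress of $(G-e,q)$ with the \emph{same} stress matrix, so Theorem \ref{thm:global} applied to $G-e$ makes $(G-e,q)$ globally, hence (by genericity) infinitesimally, rigid; this yields a stress $(\hat\omega,\hat\lambda)$ with $\hat\omega_e\neq 0$, and a small perturbation $(\omega',\lambda')+c(\hat\omega,\hat\lambda)$ contradicts minimality. You instead deduce redundant rigidity of $(G,q)$ wholesale from Theorem \ref{thm:redundant}, whose hypothesis $n\geq 7-\ell$ you flag but do not verify. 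For $\F=\Y$ it is automatic: an infinitesimally rigid framework carrying a nonzero equilibrium stress has $m+n>3n-\ell$, i.e.\ $m\geq 2n-1$, which for a simple graph forces $n\geq 5=7-\ell$. For $\F=\E$ the same count only gives $m\geq 2n+1$, hence $n\geq 6$, so the case $n=6$ (for instance $K_6$ or $K_6-e$) is not covered by Theorem \ref{thm:redundant}, and your argument is incomplete there. The repair is easy and is precisely the paper's trick: you only need a stress with nonzero coefficient on those edges $e$ where your chosen stress vanishes, and for such $e$ Theorem \ref{thm:global} applied to $G-e$ (whose stress matrix has rank $3n-\mu$) supplies it with no lower bound on $n$ beyond what the hypotheses already force. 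Your final generic-linear-combination step, ensuring simultaneously that every $\omega'_e\neq 0$ and that a fixed $(3n-\mu)\times(3n-\mu)$ minor survives, is fine once the stresses $(\omega^{(e)},\lambda^{(e)})$ are available.
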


\begin{proof} We adapt the proof technique of Connelly and Whiteley \cite[Theorem 5]{C&W}.
Choose $q:V\to \bR^{3n}$ such that $(G,q)$ is infinitesimally rigid on $\F^q$. We saw in the proof of Lemma \ref{lem:genrank} that the entries in
$R_{\F^q}(G,q)$ are rational functions of $q$. Since the space of equilibrium stresses of $(G,q)$ is the cokernel of $R_{\F^q}(G,q)$, each equilibrium stress of $(G,q)$ can be expressed as a pair of rational functions $(\omega(q,t),\lambda(q,t))$ of $q$ and $t$, where $t$ is a vector of $m-2n+\ell$ indeterminates. This implies that the entries in the corresponding stress matrix $\Omega_{\F^q}(\omega(q,t),\lambda(q,t))$ will also be rational functions of $q$ and $t$. Hence the rank of
$\Omega_{\F^q}(\omega(q,t),\lambda(q,t))$ will be maximised whenever $q,t$ is algebraically independent over $\bQ$. In particular, for any generic $q\in \bR^{3n}$, $(G,q)$ is infinitesimally rigid on $\F^q$ by Lemma \ref{lem:genrank}, and we can choose $t\in \bR^{m-2n+\ell}$ such that
 $\rank \Omega_{\F^q}(\omega(q,t),\lambda(q,t))=3n-\mu$.

Now suppose that $\F\in \{\Y,\E\}$ and that  $(\omega',\lambda')$ has been chosen such that the number of edges $e\in E$ with $\omega'_e=0$ is as small as possible. We may assume  that
$\omega'_e=0$ for some $e\in E$. Then $(\omega'|_{E-e},\lambda')$ is an equilibrium stress for $(G-e,p)$ on $\F^q$ and $\rank \Omega_{\F^q}(\omega'|_{E-e},\lambda')= 3n-\mu$. By Theorem \ref{thm:global}, $(G-e,p)$ is globally rigid on $\F^q$. In particular $(G-e,p)$ is rigid on $\F^q$. Since $p$ is generic, $(G-e,p)$ is infinitesimally rigid on $\F^q$. This implies that the row of $R_{\F^q}(G,p)$ indexed by $e$ is contained in a minimal linearly dependent set of rows of $R_{\F^q}(G,p)$. This gives us an equilibrium stress $(\hat \omega, \hat \lambda)$ for $(G,p)$ on $\F^q$ with $\hat \omega_e\neq 0$. Then
$(\omega'',\lambda'')=(\omega',\lambda')+c(\hat\omega,\hat\lambda)$ is an equilibrium stress for $(G,p)$ on $\F^q$ for any $c\in \bR$. We can now choose
a small $c>0$ so that  $\rank \Omega_{\F^q}(\omega'',\lambda'')= 3n-\mu$, and $\omega''_f\neq 0$ for all $f\in E$ with $\omega'_f\neq 0$.
This contradicts the choice of $(\omega',\lambda')$.
\end{proof}

We can now obtain our result on generic 1-extensions.

\begin{thm}\label{thm:1extgeneric}
Suppose $(G,p)$ is an infinitesimally rigid framework on $\F$, for some $\F\in \{\Y,\E\}$, and $(\omega,\lambda)$ is an equilibrium stress for $(G,p)$ with
$\rank
\Omega_{\F}(\omega,\lambda)=3n-\mu$. Let $G'=(V',E')$ be a $1$-extension of $G$ and $q:V'\to \bR^{3}$ such that
$q$ is generic in  $\bR^{3(n+1)}$.
Then $(G',q)$ is infinitesimally rigid on $S^q$ and has an equilibrium stress $(\omega',\lambda')$ with
$\rank
\Omega_{\F^q}(\omega',\lambda')=3(n+1)-\mu$.
\end{thm}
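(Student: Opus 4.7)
The plan is to combine the 1-extension construction of Lemma \ref{lem:1ext} with the genericness transport provided by Lemmas \ref{lem:genrank} and \ref{lem:genstress}. Since the generic $q$ is given in advance on $V'$, we cannot apply Lemma \ref{lem:1ext} directly, because its proof places the new vertex $v_0$ at the midpoint of two old vertex positions, which is not the case for generic $q$. The strategy is therefore to first construct an intermediate 1-extension framework with a specific (non-generic) $v_0$ coordinate, and then use the two genericness lemmas to transfer its rank information across to the target framework $(G',q)$.

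First I would set $q_0 = q|_V$. Since $q$ is generic in $\bR^{3(n+1)}$, the restriction $q_0$ is generic in $\bR^{3n}$, and a short transcendence degree computation using the defining equations of $\F^{q_0}$ shows that $(G,q_0)$ is generic on $\F^{q_0}$ in the sense used in the paper. Lemma \ref{lem:genrank} then gives that $(G,q_0)$ is infinitesimally rigid on $\F^{q_0}$, and Lemma \ref{lem:genstress} (using the hypothesis $\F \in \{\Y,\E\}$) supplies an equilibrium stress $(\bar\omega,\bar\lambda)$ for $(G,q_0)$ on $\F^{q_0}$ with $\rank \Omega_{\F^{q_0}}(\bar\omega,\bar\lambda) = 3n - \mu$ and $\bar\omega_f \neq 0$ for every $f \in E$. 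The nonvanishing of $\bar\omega$ on the edge $v_1v_2$ which is removed in the 1-extension is precisely the hypothesis required to invoke the stress-matrix conclusion of Lemma \ref{lem:1ext}.

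Next I would apply Lemma \ref{lem:1ext} to $(G,q_0)$ and $(\bar\omega,\bar\lambda)$, obtaining a framework $(G',\tilde q)$ on $\F^{\tilde q}$, with $\tilde q(v_0)=\tfrac12 q_0(v_1)+\tfrac12 q_0(v_2)$, whose rigidity matrix has rank $3(n+1)-\ell$ and which carries an equilibrium stress $(\tilde\omega,\tilde\lambda)$ with
\[
\rank \Omega_{\F^{\tilde q}}(\tilde\omega,\tilde\lambda) = \rank \Omega_{\F^{q_0}}(\bar\omega,\bar\lambda) + 3 = 3(n+1) - \mu.
\]
By Theorem \ref{thm:surfacerank}, $(G',\tilde q)$ is infinitesimally rigid on $\F^{\tilde q}$. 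Finally I would apply Lemmas \ref{lem:genrank} and \ref{lem:genstress} once more, this time taking $(G',\tilde q)$ as the input framework, to transport both maximum-rank properties to the generic $q\in \bR^{3(n+1)}$. This yields the infinitesimal rigidity of $(G',q)$ on $\F^q$ together with an equilibrium stress $(\omega',\lambda')$ with $\rank \Omega_{\F^q}(\omega',\lambda') = 3(n+1)-\mu$, as required. The main obstacle is that Lemma \ref{lem:1ext} places us on the surface $\F^{\tilde q}$, which will typically differ from the target surface $\F^q$; the second application of the genericness lemmas, which relies on the fact that all entries of the rigidity and stress matrices are rational functions of the vertex coordinates, is exactly what allows us to bridge these two surfaces.
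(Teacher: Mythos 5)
Your proposal is correct and follows essentially the same route as the paper: use Lemmas \ref{lem:genrank} and \ref{lem:genstress} to pass to a generic configuration carrying a nowhere-zero maximum-rank stress, apply Lemma \ref{lem:1ext} to build the intermediate 1-extended framework, and then apply the two genericness lemmas again to transport both maximum-rank properties to the given generic $q$. The only cosmetic difference is that you fix the first generic point to be $q|_V$, whereas the paper leaves it unspecified; this changes nothing in the argument.
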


\begin{proof}
We may assume that $p$ is a generic point in $\bR^{3n}$  and that
$\omega_e\neq 0$ for all $e\in E$  by Lemmas \ref{lem:genrank} and \ref{lem:genstress}. We can now use Lemma \ref{lem:1ext} to deduce that there exists a
map $p^*:V'\to \bR^3$ such that $(G,p^*)$ is infinitesimally rigid on $\F^{p^*}$ with an equilibrium stress $(\omega^*,\lambda^*)$ for $(G',p^*)$ on $S^{p^*}$ such that  $\rank
\Omega_{\F^{p^*}}(\omega^*,\lambda^*)=3(n+1)+3$. The theorem now follows by another application of Lemmas \ref{lem:genrank} and \ref{lem:genstress}.
\end{proof}

\section{Global rigidity on concentric cylinders}
\label{sec:cyl}

In this section we apply our results to make progress on the conjectured characterisation of global rigidity on concentric cylinders given in \cite[Conjecture $9.1$]{JMN}, see also \cite[Conjecture $5.7$]{Nix}.

\begin{con}\label{con:redundant}
Suppose $(G,p)$ is a generic  framework on a family of concentric cylinders $\Y$.
Then $(G,p)$ is globally rigid if and only if $G$ is a complete graph on at most four vertices, or $G$ is 2-connected and redundantly rigid on $\Y$.
\end{con}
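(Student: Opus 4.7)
My plan is to emulate the inductive argument of Jackson and Jord\'an used to establish the planar characterisation in Theorem \ref{thm:2dcomb}. The necessity of both conditions in the conjecture is already delivered by Theorem \ref{thm:redundant}, so only sufficiency requires work. The overall strategy is an induction on $|V(G)|$, using an inverse $1$-extension as the reduction step and combining Theorem \ref{thm:1extgeneric} with Theorem \ref{thm:global} to drive the inductive step.

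The first ingredient is a constructive characterisation in the spirit of Berg--Jord\'an: every graph that is $2$-connected and redundantly rigid on $\Y$ should be obtainable from a short list of base graphs by $1$-extensions and edge additions, with every intermediate graph in the sequence remaining $2$-connected and redundantly rigid on $\Y$. A natural base graph is $K_5$ (observing that $K_4$ is merely minimally rigid on $\Y$ and so not redundantly rigid), possibly together with a small family of further exceptional cases. Proving the reduction, i.e.\ that whenever $|V(G)|$ is large enough some inverse $1$-extension preserves both properties, will require a careful combinatorial analysis in the $(2,2)$-sparse setting and is likely to be the hardest single step; the base cases would then be dispatched by exhibiting explicit equilibrium stresses of maximum rank $3n-6$ on each base graph.

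Given the reduction, the inductive step is straightforward for edge additions, since any framework on $\Y$ equivalent to $(G+e,p)$ is also equivalent to $(G,p)$. For a $1$-extension, suppose by induction that some (equivalently every) generic realisation on $\Y$ of a smaller $2$-connected redundantly rigid graph $G$ is globally rigid and admits an equilibrium stress of maximum rank. Theorem \ref{thm:1extgeneric} then produces a map $q\in\bR^{3(n+1)}$, generic in $\bR^{3(n+1)}$, together with an equilibrium stress for the $1$-extension $G'$ whose stress matrix has maximum rank $3(n+1)-6$, but on the induced surface $\F^q$ rather than on the prescribed $\Y$. Theorem \ref{thm:global} converts this to global rigidity of $(G',q)$ on $\F^q$.

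The principal obstacle, explicitly flagged in the excerpt after Lemma \ref{lem:1ext}, is the mismatch between $\F^q$ and the prescribed family $\Y$. To close this gap I would prove a transfer principle: for a graph $G'$ that is rigid on every family of concentric cylinders, having a maximum-rank stress matrix at some generic realisation on some family is equivalent to having one at every generic realisation on every family. This is plausible because Theorem \ref{thm:cylinderlaman} and the remark following Theorem \ref{thm:surfacerank} already show that generic rigidity on concentric cylinders is radius-independent; extending this to the rank of the stress matrix should proceed by treating stress matrix entries as rational functions of the framework coordinates and the radii and then invoking the Zariski-openness of maximal rank, so that the property propagates from any generic $(r,p)$ to all generic $(r',p')$. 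Combining this transfer principle with Theorem \ref{thm:1extgeneric}, Theorem \ref{thm:global} and the constructive characterisation would close the induction and establish Conjecture \ref{con:redundant}.
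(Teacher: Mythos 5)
The statement you are trying to prove is Conjecture \ref{con:redundant}: the paper does not prove it, and your proposal does not either --- it is an outline whose two key ingredients are precisely the open problems the paper identifies. First, the constructive characterisation you posit is Conjecture \ref{con:recurse}, which is only known in the special case $|E|=2|V|-1$ (circuits, via \cite{Nix}). Moreover your version of it is weaker than what is actually needed: the paper's conjectured construction uses the base graphs $K_5-e$, $H_1$, $H_2$ \emph{and} the $1$-, $2$- and $3$-join operations, not just $1$-extensions and edge additions. A $2$-connected redundantly rigid graph obtained by gluing two rigid pieces along a join cannot in general be reduced by an inverse $1$-extension while preserving both properties, so a construction from $K_5$ by $1$-extensions and edge additions alone will not reach all graphs in the class; and even if the joins are added to the construction, you would then need a separate geometric argument that each join operation preserves generic global rigidity on $\Y$, which neither you nor the paper supplies.

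Second, your ``transfer principle'' does not work as stated. Zariski-openness of maximal rank tells you that the set of pairs $(r,p)$ at which the stress matrix achieves maximal rank is open and dense in the combined parameter space, so the property holds when $(r,p)$ is generic \emph{jointly}. But a generic framework on a prescribed family $\Y(r)$ --- in particular the single cylinder $\Y(1)$, where all $r_i$ are equal --- corresponds to a point lying on a proper subvariety of that parameter space, and the rank can drop there; genericity of $p$ over $\bQ(r)$ does not rescue you. This is exactly why Theorem \ref{thm:cylinderglobal1ext} is stated only for $\Y^p$ with $p$ generic in $\bR^{3n}$ (so that the radii are themselves algebraically independent), why the footnote after Lemma \ref{lem:1ext} flags the problem of forcing $\F^q=\F$, and why the paper ends Section \ref{sec:cyl} with a further conjecture rather than a theorem. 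There is also a third, compounding gap: even if you could transfer a maximum-rank equilibrium stress to a generic framework on an arbitrary prescribed $\Y$, concluding global rigidity from it requires Conjecture \ref{con:affine} (or a positive semi-definite stress matrix), since Theorem \ref{thm:global} is proved only for surfaces induced by a point that is generic in $\bR^{3n}$. What the paper actually establishes is the much more limited Theorem \ref{thm:cylinderglobal1ext}; your plan correctly mirrors that partial result but does not close any of the gaps separating it from Conjecture \ref{con:redundant}.
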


We have seen that the redundant rigidity of $G$ on $\Y$ is independent of the radii of the cylinders in $\Y$. Thus
Conjecture \ref{con:redundant} would imply that the global rigidity of a generic realisation of $G$ on a family of concentric cylinders is also independent of the radii of the cylinders.

Theorem \ref{thm:redundant} shows that the combinatorial conditions given in Conjecture \ref{con:redundant} are necessary for global rigidity. We could try to demonstrate sufficiency using a similar proof technique to that of Theorem \ref{thm:2dcomb}. This would involve two steps: (i) a graph theoretic step obtaining a recursive construction for 2-connected, redundantly rigid graphs; (ii) a geometric step showing that each operation used in the recursive construction preserves global rigidity. Part (i) would be resolved by the following conjecture (which uses the base graphs $K_5-e,H_1,H_2$ and the operations of $1$-, $2$- and $3$-join illustrated in Figures \ref{fig:smallgraphs} and \ref{fig:sums}).

\begin{con}\label{con:recurse}
Suppose $G$ is a $2$-connected graph which is redundantly rigid on some (or equivalently every) family of concentric cylinders. Then $G$ can be obtained from either $K_5-e$, $H_1$ or $H_2$ by recursively applying the operations of edge addition, $1$-extension, and  $1$-, $2$- and $3$-join.
\end{con}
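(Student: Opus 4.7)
The plan is to proceed by induction on $|V(G)|$. The base cases consist of $K_5-e$, $H_1$, $H_2$, together with direct verification for all other $2$-connected graphs on small numbers of vertices that are redundantly rigid on $\Y$. By Theorem~\ref{thm:cylinderlaman}, any such graph $G$ satisfies $|E(G)|\geq 2|V(G)|-1$ and retains a spanning $(2,2)$-tight subgraph after the deletion of any edge; in particular every vertex of $G$ has degree at least $3$. The inductive step splits into a degree-based reduction and a connectivity-based reduction.

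For the degree-based reduction, if $G$ contains a vertex $v$ of degree exactly $3$ with neighbours $v_1,v_2,v_3$, I would attempt an inverse $1$-extension: delete $v$ and add back one of the three candidate edges $v_iv_j$. The task is then to show that at least one such choice yields a graph $G'$ that is still $2$-connected and redundantly rigid on $\Y$. This reduces to a matroid-theoretic argument in the row matroid of $R_{\Y}(G,p)$ for generic $p$, paralleling the corresponding step of Jackson and Jord\'an in the proof of Theorem~\ref{thm:2dcomb}. Vertices of higher degree would be handled by seeking a suitable lower-degree vertex elsewhere or by passing to the connectivity-based step.

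When no admissible inverse $1$-extension exists, the strategy is to locate a small vertex cut and split $G$ accordingly. A $2$-vertex cut $\{u,w\}$ would give a decomposition of $G$ as an inverse $2$-join of two strictly smaller graphs, each of which (possibly after adding the edge $uw$) should again be $2$-connected and redundantly rigid on $\Y$; a $3$-vertex cut would be treated analogously via inverse $3$-join; and the edge-addition operation handles the case where $G$ is obtained from a minimally redundantly rigid graph by appending further edges. Verifying that the pieces produced by these splittings are themselves $2$-connected and redundantly rigid on $\Y$ is the crux of this step, and $H_1$ and $H_2$ are expected to appear precisely as the irreducible obstructions that admit no such decomposition.

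The main obstacle, I expect, is the inverse $1$-extension step: one must rule out the possibility that all three candidate edge-additions at a degree-$3$ vertex simultaneously destroy either $2$-connectivity or redundant rigidity on $\Y$. In the planar setting this was achieved by Jackson and Jord\'an via an intricate ear-decomposition argument in the generic rigidity matroid $\R_2$ combined with a careful structural analysis of the critical subgraphs that can arise. The cylinder analogue will require developing the parallel structural theory for the cylinder rigidity matroid, accounting for the different sparsity count $2|V|-2$ induced by the $2$-dimensional isometry group of $\Y$, and handling the extra base graphs $H_1,H_2$ which have no planar counterpart. A secondary difficulty is verifying that when the degree-based reduction fails a $2$- or $3$-vertex cut always exists whose two sides remain $2$-connected and redundantly rigid on $\Y$ after the appropriate augmentation, so that the induction genuinely goes through.
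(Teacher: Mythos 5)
The statement you are addressing is stated in the paper as a \emph{conjecture} (Conjecture \ref{con:recurse}); the paper offers no proof of it, and indeed presents it precisely as the unresolved combinatorial step (i) in a would-be proof of Conjecture \ref{con:redundant}, noting only that the special case $|E|=2|V|-1$ (circuits in the cylinder rigidity matroid) is settled in \cite{Nix}. So there is no proof in the paper against which your argument could be matched, and what you have written is a programme rather than a proof: it correctly identifies the intended strategy (induction via inverse $1$-extension at degree-$3$ vertices, plus splitting along $2$- and $3$-vertex cuts via inverse joins, with $K_5-e$, $H_1$, $H_2$ as irreducible base graphs), but every step that carries mathematical content is deferred.

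Concretely, the two steps you flag as ``obstacles'' are exactly the open problems. First, the admissibility of an inverse $1$-extension: you must show that for a degree-$3$ vertex $v$ with neighbours $v_1,v_2,v_3$, some choice of added edge $v_iv_j$ preserves both $2$-connectivity and redundant rigidity on $\Y$. In the plane this rests on Jackson and Jord\'an's detailed theory of $M$-circuits and $M$-connected components in the rigidity matroid $\mathcal{R}_2$ (ear decompositions, analysis of critical $(2,3)$-tight subgraphs); no analogous structural theory for the count $2|V|-2$ and the cylinder matroid is developed either in this paper or in your sketch, and it is not routine --- the existence of the extra base graphs $H_1,H_2$ already shows the planar arguments do not transfer verbatim. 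Second, when no admissible reduction at a degree-$3$ vertex exists, you assert that a $2$- or $3$-vertex cut exists whose pieces, after augmentation, are again $2$-connected and redundantly rigid on $\Y$; this is asserted, not argued, and it is precisely where one must also prove that the only irreducible graphs are the three listed base graphs. As it stands your text would serve as a plausible research plan for attacking the conjecture, but it does not establish the statement, and no comparison with a paper proof is possible because none exists.
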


The results of \cite{Nix} verify the special case of this conjecture when $|E|=2|V|-1$ i.e. $E$ is a circuit in the generic rigidity matroid for the cylinder.

 \begin{figure}[htp]
\begin{center}
\begin{tikzpicture}[very thick,scale=.47]
\filldraw (-.5,0) circle (3pt)node[anchor=east]{$v_5$};
\filldraw (0,3) circle (3pt)node[anchor=east]{$v_1$};
\filldraw (3.5,0) circle (3pt)node[anchor=west]{$v_3$};
\filldraw (3,3) circle (3pt)node[anchor=west]{$v_2$};
\filldraw (1.5,-1.5) circle (3pt)node[anchor=west]{$v_4$};

 \draw[black,thick]
(1.5,-1.5) -- (-.5,0) -- (0,3) -- (3.5,0) -- (3,3) -- (-.5,0) -- (3.5,0);

\draw[black,thick]
(1.5,-1.5) -- (0,3) -- (3,3) -- (1.5,-1.5);

  \node [rectangle, draw=white, fill=white] (b) at (1.5,-3) {(a)};
        \end{tikzpicture}
          \hspace{0.5cm}
     \begin{tikzpicture}[very thick,scale=.47]
\filldraw (0,0) circle (3pt)node[anchor=east]{$v_4$};
\filldraw (0,3.5) circle (3pt)node[anchor=east]{$v_1$};
\filldraw (3.5,0) circle (3pt)node[anchor=north]{$v_5$};
\filldraw (3.5,3.5) circle (3pt)node[anchor=south]{$v_2$};
\filldraw (7,0) circle (3pt)node[anchor=west]{$v_6$};
\filldraw (7,3.5) circle (3pt)node[anchor=west]{$v_3$};

 \draw[black,thick]
(0,0) -- (0,3.5) -- (3.5,0) -- (3.5,3.5) -- (0,0) -- (3.5,0) -- (7,3.5);

\draw[black,thick]
(0,3.5) -- (3.5,3.5) -- (7,3.5) -- (7,0) -- (3.5,3.5);

\draw[black,thick]
(7,0) -- (3.5,0);

\node [rectangle, draw=white, fill=white] (b) at (3.5,-2) {(b)};
\end{tikzpicture}
       \hspace{0.5cm}
    \begin{tikzpicture}[very thick,scale=.47]
\filldraw (0,-1) circle (3pt)node[anchor=east]{$v_4$};
\filldraw (-1.5,2) circle (3pt)node[anchor=east]{$v_1$};
\filldraw (3,0) circle (3pt)node[anchor=north]{$v_5$};
\filldraw (1.5,3) circle (3pt)node[anchor=south]{$v_2$};
\filldraw (6,-1) circle (3pt)node[anchor=north]{$v_6$};
\filldraw (4.5,3) circle (3pt)node[anchor=south]{$v_3$};
\filldraw (7.5,2) circle (3pt)node[anchor=west]{$v_7$};

 \draw[black,thick]
(0,-1) -- (-1.5,2) -- (3,0) -- (1.5,3) -- (0,-1) -- (3,0) -- (4.5,3);

\draw[black,thick]
(-1.5,2) -- (1.5,3) -- (4.5,3) -- (6,-1) -- (7.5,2) -- (4.5,3);

\draw[black,thick]
(6,-1) -- (3,0) -- (7.5,2);

  \node [rectangle, draw=white, fill=white] (b) at (3,-3) {(c)};
         \end{tikzpicture}
\end{center}
\vspace{-0.3cm}
\caption{The graphs $K_5-e,H_1$ and $H_2$.}
\label{fig:smallgraphs}
\end{figure}
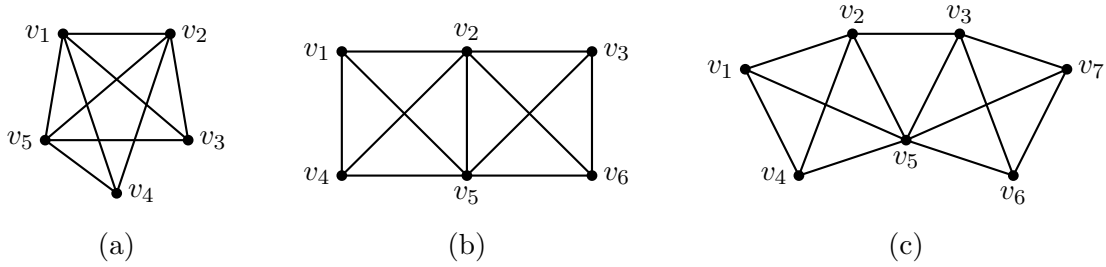

\begin{center}
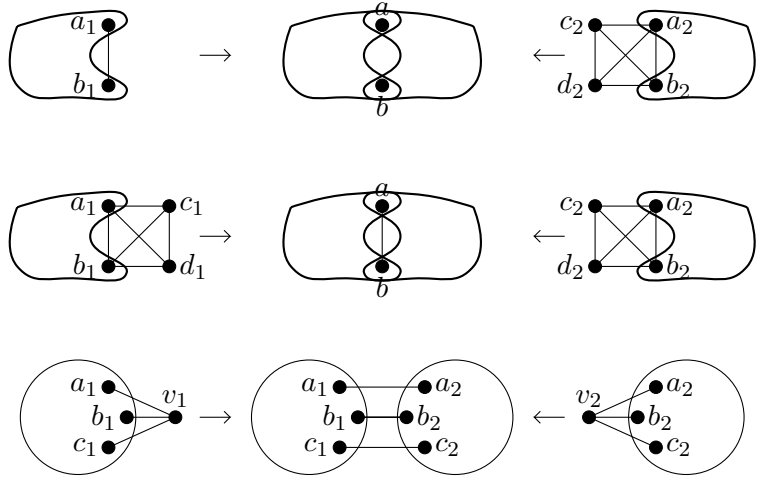
\begin{figure}[ht]
\centering
\begin{tikzpicture}[scale=0.8]

\filldraw (0,3.5) circle (3pt) node[anchor=south]{$a$};
\filldraw (0,2.5) circle (3pt) node[anchor=north]{$b$};

\filldraw (-4.5,2.5) circle (3pt) node[anchor=east]{$b_1$};
\filldraw (-4.5,3.5) circle (3pt) node[anchor=east]{$a_1$};

\filldraw (4.5,2.5) circle (3pt) node[anchor=west]{$b_2$};
\filldraw (4.5,3.5) circle (3pt) node[anchor=west]{$a_2$};
\filldraw (3.5,2.5) circle (3pt) node[anchor=east]{$d_2$};
\filldraw (3.5,3.5) circle (3pt) node[anchor=east]{$c_2$};

\draw[thick] plot[smooth, tension=1] coordinates{(-1.5,3.48) (-1.5,2.5) (-.5,2.3) (.3,2.4) (-.3,3) (.3,3.6)(-.5,3.7) (-1.5,3.48)};

\draw[thick] plot[smooth, tension=1] coordinates{(1.5,3.48) (1.5,2.5) (.5,2.3) (-.3,2.4) (.3,3) (-.3,3.6)(.5,3.7) (1.5,3.48)};

\draw[thick] plot[smooth, tension=1] coordinates{(-6,3.48) (-6,2.5) (-5,2.3) (-4.2,2.4) (-4.8,3) (-4.2,3.6)(-5,3.7) (-6,3.48)};

\draw[thick] plot[smooth, tension=1] coordinates{(6,3.48) (6,2.5) (5,2.3) (4.2,2.4) (4.8,3) (4.2,3.6)(5,3.7) (6,3.48)};

\draw[black]
(-4.5,3.5) -- (-4.5,2.5);

\draw[black]
(4.5,3.5) -- (4.5,2.5) -- (3.5,2.5) -- (3.5,3.5) -- (4.5,3.5) -- (3.5,2.5);

\draw[black]
(4.5,2.5) -- (3.5,3.5);

\draw[black]
(-3,3) -- (-2.5,3) -- (-2.6,3.1);

\draw[black]
(-2.5,3) -- (-2.6,2.9);

\draw[black]
(3,3) -- (2.5,3) -- (2.6,2.9);

\draw[black]
(2.5,3) -- (2.6,3.1);

\filldraw (0,.5) circle (3pt) node[anchor=south]{$a$};
\filldraw (0,-.5) circle (3pt) node[anchor=north]{$b$};

\filldraw (-4.5,-.5) circle (3pt) node[anchor=east]{$b_1$};
\filldraw (-4.5,.5) circle (3pt) node[anchor=east]{$a_1$};
\filldraw (-3.5,-.5) circle (3pt) node[anchor=west]{$d_1$};
\filldraw (-3.5,.5) circle (3pt) node[anchor=west]{$c_1$};

\filldraw (4.5,-.5) circle (3pt) node[anchor=west]{$b_2$};
\filldraw (4.5,.5) circle (3pt) node[anchor=west]{$a_2$};
\filldraw (3.5,-.5) circle (3pt) node[anchor=east]{$d_2$};
\filldraw (3.5,.5) circle (3pt) node[anchor=east]{$c_2$};

\draw[thick] plot[smooth, tension=1] coordinates{(-1.5,0.48) (-1.5,-.5) (-.5,-.7) (.3,-.6) (-.3,0) (.3,.6)(-.5,.7) (-1.5,.48)};

\draw[thick] plot[smooth, tension=1] coordinates{(1.5,0.48) (1.5,-.5) (.5,-.7) (-.3,-.6) (.3,0) (-.3,.6)(.5,.7) (1.5,.48)};

\draw[thick] plot[smooth, tension=1] coordinates{(-6,0.48) (-6,-.5) (-5,-.7) (-4.2,-.6) (-4.8,0) (-4.2,.6)(-5,.7) (-6,.48)};

\draw[thick] plot[smooth, tension=1] coordinates{(6,0.48) (6,-.5) (5,-.7) (4.2,-.6) (4.8,0) (4.2,.6)(5,.7) (6,.48)};

\draw[black]
(-4.5,.5) -- (-4.5,-.5) -- (-3.5,-.5) -- (-3.5,.5) -- (-4.5,.5) -- (-3.5,-.5);

\draw[black]
(-4.5,-.5) -- (-3.5,.5);

\draw[black]
(4.5,.5) -- (4.5,-.5) -- (3.5,-.5) -- (3.5,.5) -- (4.5,.5) -- (3.5,-.5);

\draw[black]
(4.5,-.5) -- (3.5,.5);

\draw[black]
(0,.5) -- (0,-.5);

\draw[black]
(-3,0) -- (-2.5,0) -- (-2.6,.1);

\draw[black]
(-2.5,0) -- (-2.6,-.1);

\draw[black]
(3,0) -- (2.5,0) -- (2.6,-.1);

\draw[black]
(2.5,0) -- (2.6,.1);

\draw (-5,-3) circle (27pt);
\draw (5,-3) circle (27pt);
\draw (1.2,-3) circle (27pt);
\draw (-1.2,-3) circle (27pt);

\filldraw (-3.4,-3) circle (3pt) node[anchor=south]{$v_1$};
\filldraw (3.4,-3) circle (3pt) node[anchor=south]{$v_2$};
\filldraw (-4.2,-3) circle (3pt) node[anchor=east]{$b_1$};
\filldraw (4.2,-3) circle (3pt) node[anchor=west]{$b_2$};
\filldraw (-4.5,-3.5) circle (3pt) node[anchor=east]{$c_1$};
\filldraw (-4.5,-2.5) circle (3pt) node[anchor=east]{$a_1$};
\filldraw (4.5,-3.5) circle (3pt) node[anchor=west]{$c_2$};
\filldraw (4.5,-2.5) circle (3pt) node[anchor=west]{$a_2$};
\filldraw (0.7,-2.5) circle (3pt) node[anchor=west]{$a_2$};
\filldraw (.7,-3.5) circle (3pt) node[anchor=west]{$c_2$};
\filldraw (.4,-3) circle (3pt) node[anchor=west]{$b_2$};
\filldraw (-.7,-2.5) circle (3pt) node[anchor=east]{$a_1$};
\filldraw (-.7,-3.5) circle (3pt) node[anchor=east]{$c_1$};
\filldraw (-.4,-3) circle (3pt) node[anchor=east]{$b_1$};

\draw[black]
(-.4,-3) -- (.4,-3);

\draw[black]
(-.7,-2.5) -- (.7,-2.5);

\draw[black]
(-.7,-3.5) -- (.7,-3.5);

\draw[black]
(-.4,-3) -- (.4,-3);

\draw[black]
(-.4,-3) -- (.4,-3);

\draw[black]
(-4.5,-3.5) -- (-3.4,-3) -- (-4.2,-3);

\draw[black]
(-3.4,-3) -- (-4.5,-2.5);

\draw[black]
(4.5,-3.5) -- (3.4,-3) -- (4.2,-3);

\draw[black]
(3.4,-3) -- (4.5,-2.5);

\draw[black]
(-3,-3) -- (-2.5,-3) -- (-2.6,-3.1);

\draw[black]
(-2.5,-3) -- (-2.6,-2.9);

\draw[black]
(3,-3) -- (2.5,-3) -- (2.6,-3.1);

\draw[black]
(2.5,-3) -- (2.6,-2.9);

\end{tikzpicture}
\caption{The 1-, 2- and 3-join operations. The 1- and 2-join operations form the graphs in the centre by merging $a_1$ and $a_2$ into $a$, and $b_1$ and $b_2$ into $b$.}
\label{fig:sums}
\end{figure}
\end{center}

We close by showing that all graphs constructed from our base graphs using the edge addition and 1-extension operations are generically globally rigid on concentric cylinders with algebraically independent radii.

\begin{thm}\label{thm:cylinderglobal1ext}
Suppose $G$ is a graph on $n$ vertices which can be constructed from $K_5-e$, $H_1$, or $H_2$ by a sequence of $1$-extensions and edge additions. Then $(G,p)$ is globally rigid on $\Y^p$ for all generic $p\in \bR^{3n}$.
\end{thm}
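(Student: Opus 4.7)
The plan is to induct on the number of construction steps, proving at each stage that for every generic $p\in\bR^{3n}$ the framework $(G,p)$ is infinitesimally rigid on $\Y^p$ and admits an equilibrium stress $(\omega,\lambda)$ whose stress matrix has rank $3n-6$. Once this invariant is established, Theorem \ref{thm:global} delivers global rigidity of $(G,p)$ on $\Y^p$ directly.

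For the base step I would verify, for each of $K_5-e$, $H_1$ and $H_2$, that there exists at least one framework $(G,p^*)$ on a family $\Y^{p^*}$ of concentric cylinders which is infinitesimally rigid and whose (essentially unique) equilibrium stress has stress matrix of rank $3n-6$. Each of these graphs has exactly $2n-1$ edges, so a generic realisation supports a one-dimensional space of equilibrium stresses, and the task reduces to checking the rank of a single symmetric matrix at one explicit point. Given one such $p^*$, Lemmas \ref{lem:genrank} and \ref{lem:genstress} promote both properties to hold for every generic $p\in\bR^{3n}$.

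The inductive step splits into the two allowed operations. If $G$ is obtained from $G_0$ by adding an edge $e$, then $(G,p)$ is still infinitesimally rigid (the rigidity matrix acquires an extra row without losing rank, and $3n-2$ is the maximum rank possible), while the old equilibrium stress extended by setting $\omega_e=0$ is an equilibrium stress for $(G,p)$ with an identical stress matrix, hence of the same maximum rank. If instead $G$ is a $1$-extension of $G_0$, then Theorem \ref{thm:1extgeneric} supplies, for every generic $q\in\bR^{3(n+1)}$, a framework $(G,q)$ on $\Y^q$ with exactly the two required properties. Both cases close the induction, and a final invocation of Theorem \ref{thm:global} completes the proof. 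The principal obstacle is purely computational, namely the direct verification of the three base cases; once these are in hand the rest of the argument is formal bookkeeping with the machinery developed in earlier sections.
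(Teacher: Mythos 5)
Your proposal is correct and follows essentially the same route as the paper: induction with the invariant ``infinitesimally rigid on $\Y^p$ plus an equilibrium stress of rank $3n-6$'', base cases handled by explicit realisations promoted to all generic points via Lemmas \ref{lem:genrank} and \ref{lem:genstress}, edge addition handled trivially (extend the stress by zero), $1$-extension handled by Theorem \ref{thm:1extgeneric}, and a final appeal to Theorem \ref{thm:global}. The only outstanding work is the explicit base-case computations for $K_5-e$, $H_1$ and $H_2$, which the paper carries out in Appendix A exactly as you propose.
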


\begin{proof}
We use induction on $n$ to show that $(G,p)$ is infinitesimally rigid on $\Y^p$ and has an equilibrium stress $(\omega,\lambda)$ with $\rank \Omega_{\Y^p}(\omega,\lambda)=3n-6$. The result will then follow from Theorem \ref{thm:global}. The base case of the induction is when $G\in \{K_5-e,H_1,H_2\}$. We construct a particular realisation $(G,p)$ for each such $G$ which is infinitesimally rigid on $\Y^p$ and has an equilibrium stress with a full rank stress matrix in Appendix A. We may deduce that the same properties hold for all generic $p$ by applying Lemmas \ref{lem:genrank} and \ref{lem:genstress}. To complete the induction we need to show that the $1$-extension and edge addition operations preserve the  properties of
infinitesimal rigidity and having a maximum rank stress matrix. This is trivially true for edge addition. It holds for 1-extension by Theorem
\ref{thm:1extgeneric}.
\end{proof}

We conjecture that Theorem \ref{thm:cylinderglobal1ext} can be strengthened to show that, if $G$ can be constructed as in Theorem \ref{thm:cylinderglobal1ext} and $(G,p)$ is a generic framework on any family of concentric cylinders $\Y$, then $(G,p)$ is globally rigid on $\Y$.

\section{Closing Remarks}\label{sec: Closing Remarks}

\noindent 1. Conjecture \ref{con:affine} would  follow from  Lemmas \ref{lem:1} and \ref{lem:global} if we could show that equivalent generic frameworks on $\F$ must have the same equilibrium stresses. To date we have only been able to prove the following partial result.

\begin{thm}\label{thm:genstressPartial}
Let $(G,p_0)$ be a generic framework on $\F$ and $(\omega,\lambda)$ be an equilibrium stress for
$(G,p_0)$. Let $(G,q_0)$ be equivalent to $(G,p_0)$. Then $(\omega,\lambda')$ is an equilibrium stress for $(G,q_0)$ for some $\lambda' \in \mathbb{R}^n$.
\end{thm}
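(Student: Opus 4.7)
The plan is to reduce the assertion to an orthogonality statement and then to transfer that orthogonality from $p_0$ to $q_0$ via a local diffeomorphism. Writing $\W = \F_1\times\cdots\times\F_n$, the row-span of $S(G,q_0)$ is precisely the normal bundle $(T_{q_0}\W)^\perp$, since the $i$-th row $s_i(q_0(v_i))$ is (a nonzero scalar multiple of) the gradient at $q_0(v_i)$ of the defining function $h_i$ of $\F_i$. Hence the existence of $\lambda' \in \bR^n$ with $\omega R(G,q_0) + \lambda' S(G,q_0)=0$ is equivalent to $\omega R(G,q_0) \perp T_{q_0}\W$, and the equilibrium hypothesis $\omega R(G,p_0) = -\lambda S(G,p_0)$ is exactly the analogous orthogonality $\omega R(G,p_0) \perp T_{p_0}\W$.

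To produce the diffeomorphism I would invoke the following sub-variety analogue of Proposition \ref{prop:Bobs3.3}: if $V\subseteq \bR^a$ is an irreducible real algebraic variety defined over a field $\bK\subseteq\bR$, $f:V\to\bR^b$ is polynomial with coefficients in $\bK$, $p\in V$ satisfies $\td[\bK(p):\bK]=\dim V$, and $f(q)=f(p)$ for some $q\in V$, then there exist open neighbourhoods $N_p,N_q$ of $p,q$ in $V$ and a diffeomorphism $g:N_q\to N_p$ with $f\circ g=f|_{N_q}$ and $g(q)=p$. I would apply this with $V=\W$ (a $\bQ(r)$-variety of dimension $2n$), $\bK=\bQ(r)$, and $f=F^G$ (whose integer coefficients lie in $\bQ(r)$): the genericity assumption $\td[\bQ(r,p_0):\bQ(r)]=2n$ together with $F^G(p_0)=F^G(q_0)$ produce a diffeomorphism $g:N_{q_0}\to N_{p_0}$ inside $\W$ with $g(q_0)=p_0$.

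Differentiating $F^G\circ g=F^G$ at $q_0$ yields $R(G,q_0)\,u=R(G,p_0)\,dg_{q_0}(u)$ for every $u\in T_{q_0}\W$, with $dg_{q_0}:T_{q_0}\W\to T_{p_0}\W$ a linear isomorphism. Therefore, for each such $u$,
\[ (\omega R(G,q_0))\cdot u \,=\, (\omega R(G,p_0))\cdot dg_{q_0}(u) \,=\, 0, \]
since $dg_{q_0}(u)\in T_{p_0}\W$ and $\omega R(G,p_0)\perp T_{p_0}\W$. This gives $\omega R(G,q_0)\perp T_{q_0}\W$, and hence $\omega R(G,q_0)=-\lambda' S(G,q_0)$ for some $\lambda'\in\bR^n$, as required.

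The main obstacle will be justifying the sub-variety version of Proposition \ref{prop:Bobs3.3}. Connelly's argument for the original statement ultimately rests on generic smoothness of the fibres of $f$, which in characteristic zero transfers from $\bR^a$ to any smooth irreducible $\bK$-variety; alternatively one may parametrise $\W$ locally by a smooth chart and reduce to the original proposition. It is also instructive to see why this approach falls short of Conjecture \ref{con:affine}: we are forced to work with $F^G$ alone rather than with the full surface rigidity map $F^{G,\F}$, because the latter has coefficients in $\bQ(r)$ while $p_0$ is not generic in $\bR^{3n}$ over $\bQ(r)$. The diffeomorphism $g$ therefore respects only the joint constraint $p\in\W$ rather than the individual equations $h_i(p(v_i))=0$, and the normal component of the stress is free to change from $\lambda$ to some $\lambda'$; by contrast, the proof of Theorem \ref{thm:global} applies Proposition \ref{prop:Bobs3.3} to the modified map $(F^G,\hat\Theta^\F)$ with integer coefficients, which preserves the whole stress $(\omega,\lambda)$ but demands the stronger hypothesis that $p_0$ be generic in all of $\bR^{3n}$ over $\bQ$.
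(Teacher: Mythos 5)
Your proposal is correct and follows essentially the same route as the paper: the sub-variety analogue of Proposition \ref{prop:Bobs3.3} that you flag as the main obstacle is exactly the paper's Proposition \ref{prop:bob3.3}, proved in Appendix B via real algebraic geometry (regular values at generic points, the constant rank theorem and a cell decomposition of a semi-algebraic image), and the paper then differentiates $f\circ g=f$ on $\W=\F_1\times\cdots\times\F_n$ and uses that $(T_{q_0}\W)^\perp$ is the row space of $S(G,q_0)$, just as you do. The only cosmetic differences are that the paper applies the proposition to $F^{G,\F}|_{\W}$ rather than $F^G|_{\W}$ (these have the same fibres since $\Theta^{\F}$ vanishes on $\W$) and carries the full covector $(\omega,\lambda)R_{\F}$ instead of $\omega R$, obtaining $\lambda'=\lambda-\delta$.
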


The proof of Theorem \ref{thm:genstressPartial} is given in Appendix B.\\

\noindent 2. It follows from \cite{C2005} and  \cite{GHT} that a generic framework in $\bR^d$ with $n\geq d+2$ vertices is globally rigid if and only if it has a stress matrix of rank
$n-d-1$.  It is conceivable that the stress matrix condition given in Theorem
 \ref{thm:global} provides a necessary, as
 well as a sufficient, condition for the global rigidity of a generic
 framework on $\F$ whenever the framework has at least $7-\ell$
 vertices.
The following examples indicate why we need this lower bound on $n$.

 The smallest redundantly rigid graph on the cone is $K_5$, but
  no framework $(K_5,p)$ on $\C$ can have a stress matrix with the maximum
  possible rank of $3n-\mu=10$. To see this consider a generic $p\in
  \bR^{15}$. Since every equilibrium stress for $(K_5,p)$ in $\bR^3$ is an equilibrium stress for $(K_5,p)$ on $\C^p$, and since the spaces of equilibrium stresses
  for $(K_5,p)$ in $\bR^3$ and on $\C^p$ are both 1-dimensional, these
  spaces are the same. This implies that every equilibrium stress
  $(\omega,\lambda)$ for $(K_5,p)$ has $\lambda=0$ and $\rank
  \Omega(\omega)\leq 3$. Hence $\rank \Omega_{\C^p}(\omega,\lambda)\leq 9$.
  On the other hand, $(K_5,p)$ is globally rigid on $\C^p$ for all $p$.

 Similarly,
 the smallest redundantly rigid graph on the ellipsoid is $K_6-\{e,f\}$ for
 two nonadjacent edges $e,f$, but no framework $(K_6-\{e,f\},p)$ on $\E$ can have
 a stress matrix with the maximum possible rank of $3n-\mu=15$. (We do not know whether every generic framework $(K_6-\{e,f\},p)$ on $\E^p$ is globally rigid.)\\

\textbf{Acknowledgements.} We would like to thank the School of Mathematics, University of Bristol for providing partial financial support for this research. We would also like to thank Lee Butler for helpful discussions concerning semi-algebraic sets and Bob Connelly for many helpful discussions.

\section*{Appendix A: Base graphs}

We define a framework $(G,p)$ for $G\in \{K_5-e, H_1,H_2\}$ which is infinitesimally rigid on $\Y^p$ and has a self-stress $(\omega,\lambda)$  on $\Y^p$ with maximum rank stress matrix.
 We will use the labeling of the vertices given in Figure \ref{fig:smallgraphs} and adopt the convention that  $\omega_{ij}$  is the weight on the edge $v_iv_j$ in $\omega$ and  $\lambda_i$  is the weight on the vertex $v_i$ in $\lambda$.
\\

\subsection*{\bf Case 1: $G=K_5-e$}
Let
$(G,p)$ and $(\omega,\lambda)$ be defined by
$p(v_1)=(0,1,0), p(v_2)=(1,1,1), p(v_3)=(-1,-2,-1), p(v_4)=(2,3,4), p(v_5)=(5,1,-1)$,
$$(\omega_{12},\omega_{13},\omega_{14},\omega_{15},\omega_{23},\omega_{24},\omega_{25},\omega_{35},\omega_{45})=
(-369,192,153,51,-96,-279,-138,32,45)$$
and
$$(\lambda_1,\lambda_2,\lambda_3,\lambda_4,\lambda_5)=(-270,-270,-192,54,-6)).$$
It is straightforward to check that $\rank R_{\Y^p}(G,p)=13$, that $(\omega,\lambda)\cdot R_{\Y^p}(G,p)=0$ and that $\rank \Omega_{\Y^p}(\omega,\lambda)=9$.

\subsection*{\bf Case 2: $G=H_1$}
Let
$(G,p)$ and $(\omega,\lambda)$ be defined by
$p(v_1)=(0,1,0), p(v_2)=(3,1,0), p(v_3)=(1,4,1),p(v_4)=(1,2,2), p(v_5)=(2,2,3), p(v_6)=(6,0,2)$,
\begin{multline*}
(\omega_{12},\omega_{13},\omega_{15},\omega_{23},\omega_{24},\omega_{25},\omega_{26},\omega_{35},\omega_{36},\omega_{45},\omega_{56})\\=
(41,-246,369,-123,30,48,60,50,-40,492,56)
\end{multline*}
and
$$(\lambda_1,\lambda_2,\lambda_3,\lambda_4,\lambda_5,\lambda_6)=(-123,-39,30,123,-102,28).$$
It is straightforward to check that $\rank R_{\Y^p}(G,p)=16$, that $(\omega,\lambda)\cdot R_{\Y^p}(G,p)=0$  and that $\rank \Omega_{\Y^p}(\omega,\lambda)=12$.

\subsection*{\bf Case 3: $G=H_2$}
Let
$(G,p)$ and $(\omega,\lambda)$ be defined by
$p(v_1)=(0,1,0), p(v_2)=(3,1,0), p(v_3)=(1,4,1), p(v_4)=(1,2,2),p(v_5)=(2,2,3), p(v_6)=(6,0,2),p(v_7)=(3,4,3)$,
\begin{multline*}
(\omega_{12},\omega_{13},\omega_{15},\omega_{23},\omega_{24},\omega_{25},\omega_{35},\omega_{36},\omega_{37},\omega_{45},\omega_{56},\omega_{57},\omega_{67})\\
=(-58,348,-522,-108,-24,-40,14,21,-696,56,588,-42)
\end{multline*}
and
$$(\lambda_1,\lambda_2,\lambda_3,\lambda_4,\lambda_5,\lambda_6,\lambda_7)=(-174,-6,24,174,372,-28,-252).$$
It is straightforward to check that $\rank R_{\Y^p}(G,p)=19$, that $(\omega,\lambda)\cdot R_{\Y^p}(G,p)=0$  and that  $\rank \Omega_{\Y^p}(\omega,\lambda)=15$.

\section*{Appendix B: Proof of Theorem \ref{thm:genstressPartial}}
\label{sec:alggeom}

First we recall some basic
concepts from differential and algebraic geometry, and prove a key
technical result, Proposition \ref{prop:bob3.3}, which extends Proposition \ref{prop:Bobs3.3} to the case when the domain of $f$ is an algebraic set.


Let $M$ be a smooth manifold and $f:M\rightarrow \bR^m$ be a smooth map. Then $x \in M$ is  a \emph{regular point of $f$} if $df|_x$ has maximum rank, and  $f(x)$ is a \emph{regular value of $f$} if, for all $y \in f^{-1}(f(x))$, $y$ is a regular point of $f$.

\begin{lem}\label{lem:2man} For $i=1,2$, let $M_i$ be an open subset of $\bR^n$, $p_i\in M_i$, and $f_i:M_i\to \bR^m$ be a smooth map with
$\rank df_i|_{p_i} = m$  and $f_1(p_1)=f_2(p_2)$. Then there exist open neighbourhoods $N_1$ of $p_1$, $N_2$ of $p_2$, and a diffeomorphism
$g:N_1\to N_2$ such that $f_2(g(x))=f_1(x)$ for all $x\in N_1$.
\end{lem}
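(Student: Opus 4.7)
The plan is to put each submersion $f_i$ into the standard local normal form of a submersion and then compose the two straightening diffeomorphisms.

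First I would apply the local submersion theorem to each $f_i$ separately. Since $df_i|_{p_i}\colon\bR^n\to\bR^m$ is surjective, pick a linear complement $C_i$ of $K_i:=\ker df_i|_{p_i}$ in $\bR^n$ and consider the auxiliary map
\[
\Phi_i\colon C_i\oplus K_i\to \bR^m\oplus K_i,\qquad (c,k)\mapsto \bigl(f_i(p_i+c+k)-f_i(p_i),\,k\bigr).
\]
Its differential at $(0,0)$ sends $(c,k)$ to $\bigl(df_i|_{p_i}(c),k\bigr)$, which is a linear isomorphism because $df_i|_{p_i}$ restricts to an isomorphism on $C_i$. The inverse function theorem therefore provides a local smooth inverse; after identifying $K_i$ with $\bR^{n-m}$, this yields an open neighbourhood $W_i\subset\bR^m\times\bR^{n-m}$ of the origin and a smooth diffeomorphism $\phi_i\colon W_i\to\phi_i(W_i)\subset M_i$ with $\phi_i(0)=p_i$ and
\[
f_i(\phi_i(y,z))=y+f_i(p_i)\quad\text{for all }(y,z)\in W_i.
\]

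Second, I would exploit $f_1(p_1)=f_2(p_2)$: the displayed identity shows that $f_1\circ\phi_1$ and $f_2\circ\phi_2$ coincide on the common neighbourhood $W:=W_1\cap W_2$ of the origin, both being the affine map $(y,z)\mapsto y+f_1(p_1)$. Taking $N_i:=\phi_i(W)$ and $g:=\phi_2\circ\phi_1^{-1}\colon N_1\to N_2$ then produces the required diffeomorphism, since
\[
f_2\circ g=f_2\circ\phi_2\circ\phi_1^{-1}=f_1\circ\phi_1\circ\phi_1^{-1}=f_1\quad\text{on }N_1.
\]

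I do not anticipate any serious obstacle; the whole argument is a direct application of the inverse function theorem. The only minor subtlety is that the two normal forms must be built using the same ambient splitting $\bR^n=\bR^m\times\bR^{n-m}$ (independent of $i$) so that both become the same coordinate projection onto the first factor — the freedom in choosing $C_i$ and the identification $K_i\cong\bR^{n-m}$ makes this trivial to arrange.
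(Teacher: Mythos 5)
Your proof is correct. It rests on the same tool as the paper's proof, the inverse function theorem, but organises it differently: you invoke the standard local normal form for a submersion, straightening each $f_i$ via the auxiliary map $\Phi_i$ built from a complement $C_i$ of $\ker df_i|_{p_i}$, so that in the resulting charts both $f_1\circ\phi_1$ and $f_2\circ\phi_2$ are the same map $(y,z)\mapsto y+f_1(p_1)$, and then simply take $g=\phi_2\circ\phi_1^{-1}$. The paper instead splits into two cases: for $m=n$ it applies the inverse function theorem to $f_i$ directly and sets $g=f_2^{-1}\circ f_1$; for $m<n$ it augments $f_i$ by the last $n-m$ ambient coordinates to obtain local diffeomorphisms $F_i$, and repairs the mismatch of $p_1,p_2$ in the second factor by composing with a diffeomorphism $h$ sending $\overline{p}_1$ to $\overline{p}_2$, thereby reducing to the first case. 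Your route treats both cases uniformly (for $m=n$ the kernel is trivial) and, by choosing a genuine complement of the kernel, avoids the implicit assumption in the paper's choice of $F_i$ that the projection onto the last $n-m$ coordinates is injective on $\ker df_i|_{p_i}$ (which in general needs a preliminary linear change of coordinates); the paper's correcting map $h$ is what makes an explicit kernel splitting unnecessary in its version. One small remark: the ``subtlety'' you flag at the end is not actually an issue — no compatibility between the splittings $C_1\oplus K_1$ and $C_2\oplus K_2$ is needed, only the common model space $\bR^m\times\bR^{n-m}$, and since you subtract $f_i(p_i)$ in $\Phi_i$ and $f_1(p_1)=f_2(p_2)$, the two compositions coincide automatically. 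As a bonus your $g$ satisfies $g(p_1)=p_2$, which is the form of the statement actually used in the proof of Proposition \ref{prop:bob3.3}.
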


\begin{proof} We first consider the case when $m=n$.
By the Inverse Function Theorem there exist neighbourhoods $\tilde N_i\subseteq M_i$ of $p_i$ such that $f_i$ maps $\tilde N_i$ diffeomorphically onto $f_i(\tilde N_i)$ for $i=1,2$. Let $W=f_1(\tilde N_1)\cap f_2(\tilde N_2)$ and then let $N_i=f_i^{-1}(W)$ for $i=1,2$. We have $f_1({N}_1)=W=f_2({N}_2)$. Thus we may choose $g=f_2^{-1} \circ f_1$ and find $f_2(g(x))=f_2(f_2^{-1}( f_1(x)))=f_1(x)$ for all $x\in N_1$.

We next consider the case when $m<n$. Let $F_i:M_i\rightarrow \bR^m \times \bR^{n-m}$ be defined by $F_i(x)=(f_i(x),x_{m+1},x_{m+2},\dots, x_n)$. Then $\rank dF_i|_{p_i}=n$. By the Inverse Function Theorem there exist neighbourhoods $\tilde N_i \subseteq M_i$ of $p_i$ such that $F_i$ is a diffeomorphism from $\tilde N_i$ to $F_i(\tilde N_i)\subseteq \bR^m \times \bR^{n-m}$. Let $F_i(\tilde N_i)=U_i\times V_i$ where $U_i \subseteq \bR^m$ and $V_i\subseteq \bR^{n-m}$. Then $V_i$ is diffeomorphic to $\bR^{n-m}$ for $i=1,2$ so we can choose a diffeomorphism $h:V_1\rightarrow V_2$ such that $h(\overline{p}_1)=\overline{p}_2$, where $\overline{p}_i$ is the projection of $p_i$ onto its last $n-m$ coordinates. Let $\iota$ be the identity map on $U_1$ and let $H=(\iota,h):U_1\times V_1 \rightarrow U_1\times V_2$. Let $F_1'=H\circ F_1$. Then we have $F_1'(p_1)=(f_1(p_1),h(\overline{p}_1))=(f_2(p_2),\overline{p}_2)=F_2(p_2)$. By the previous paragraph there exist neighbourhoods $N_i \subseteq \tilde N_i$ of $p_i$ and a diffeomorphism $g:N_1\rightarrow N_2\subseteq \bR^n$ such that $F_2(g(x))=F_1'(x)$ for all $x \in N_1$. Since $F_1'(x)=(f_1(x),h(\overline{x}))$ and $F_2(g(x))=(f_2(g(x)),\overline{g(x)})$ we have $f_1(x)=f_2(g(x))$ for all $x\in N_1$.
\end{proof}

Let $\bK$ be a field such that $\bQ\subseteq \bK  \subseteq \bR$.
 A set $W\subseteq \bR^n$ is an {\em algebraic set defined  over $\bK$} if
 $W=\{x\in \bR^n: P_i(x)=0 \mbox{ for all } 1 \leq i \leq n\}$ where $P_i\in \bK [X_1,\dots, X_n]$ for $1 \leq i \leq m$.
An algebraic set $W$ is \emph{irreducible} if it cannot be expressed as the union of two  algebraic proper subsets defined over $\bR$.
The {\em dimension} of $W$, $\dim W$, is the largest integer $t$ for which $W$ has an open subset homeomorphic to $\bR^t$.
A point $p\in W$ is {\em generic over $\bK $} if every $h\in \bK [X]$ satisfying
$h(p)=0$ has $h(x)=0$ for all $x\in W$.

\begin{lem}[\cite{JMN}]\label{cor:realvar}
Let $\bK $ be a field with $\bQ\subseteq \bK \subseteq \bR$, $W\subseteq \bR^n$ be an algebraic set defined over $\bK$  and $p\in W$.  Then
$\dim W\geq \td[\bK (p):\bK ]$.  Furthermore, if $W$ is irreducible  and $\dim W= \td[\bK (p):\bK ]$, then $p$ is a generic point of $W$.
\end{lem}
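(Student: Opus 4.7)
The plan is to prove the inequality by exhibiting a real-analytic submanifold of dimension $t := \td[\bK(p):\bK]$ inside the smallest $\bK$-algebraic set containing $p$, and then deduce the genericity claim by a dimension comparison. Let $I(p) = \{h \in \bK[X_1, \ldots, X_n] : h(p) = 0\}$ and write $V_p \subseteq \bR^n$ for its zero set; since every $\bK$-polynomial vanishing on $W$ also vanishes at $p$, we have $V_p \subseteq W$, so it suffices to show $\dim V_p \geq t$. Reorder coordinates so that $p_1, \ldots, p_t$ is a transcendence basis of $\bK(p)$ over $\bK$. For each $j > t$, clear denominators in the minimal polynomial of $p_j$ over $\bK(p_1, \ldots, p_t)$ to obtain $f_j \in \bK[X_1, \ldots, X_t, Y]$ with $f_j(p_1, \ldots, p_t, p_j) = 0$; separability (characteristic zero) forces $(\partial f_j/\partial Y)(p_1, \ldots, p_t, p_j) \neq 0$. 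The implicit function theorem then supplies an open neighbourhood $U \subseteq \bR^t$ of $(p_1, \ldots, p_t)$ and real-analytic functions $g_j : U \to \bR$ with $g_j(p_1, \ldots, p_t) = p_j$ and $f_j(x, g_j(x)) \equiv 0$ on $U$. Then $\Phi(x) = (x, g_{t+1}(x), \ldots, g_n(x))$ is a real-analytic embedding of $U$ into $\bR^n$ whose image is a $t$-dimensional submanifold through $p$.

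The key step is to verify $\Phi(U) \subseteq V_p$. Let $R$ be the ring of germs at $(p_1, \ldots, p_t)$ of real-analytic functions on $\bR^t$; this is an integral domain since it embeds into the formal power-series ring $\bR[[X_1 - p_1, \ldots, X_t - p_t]]$. Define $\phi : \bK[X_1, \ldots, X_n] \to R$ by sending $X_i$ to the germ of the $i$-th coordinate for $i \leq t$ and $X_j$ to $g_j$ for $j > t$, and set $J = \ker \phi$. Composition of $\phi$ with evaluation at $(p_1, \ldots, p_t)$ recovers the evaluation-at-$p$ homomorphism, so $J \subseteq I(p)$. The image $\bK[X]/J \cong \bK[x_1, \ldots, x_t, g_{t+1}, \ldots, g_n]$ is a domain whose fraction field is a finite algebraic extension of $\bK(x_1, \ldots, x_t)$, so it has transcendence degree $t$ over $\bK$. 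Consequently $J$ and $I(p)$ are both primes of height $n - t$ in the catenary ring $\bK[X]$, which forces $J = I(p)$. Thus $h \circ \Phi \equiv 0$ on $U$ for every $h \in I(p)$, so $\Phi(U) \subseteq V_p$ and $\dim W \geq \dim V_p \geq t$.

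For the second assertion, assume $W$ is irreducible over $\bR$ with $\dim W = t$, and suppose for contradiction that $p$ is not generic on $W$. Then some $h \in \bK[X]$ has $h(p) = 0$ but $h|_W \not\equiv 0$, so $W' := W \cap V(h) \subsetneq W$ is a proper $\bK$-algebraic subset containing $p$, and the first part applied to $W'$ gives $\dim W' \geq t$. On the other hand, irreducibility of $W$ makes $I_\bR(W)$ a prime of $\bR[X]$ strictly contained in every minimal prime of $\bR[X]$ over $I_\bR(W')$; combining catenariness of $\bR[X]$ with the identity $\dim V = \dim \bR[X]/I_\bR(V)$ for real algebraic sets (see \cite{BCR}) yields $\dim W' < \dim W = t$, a contradiction. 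I expect the main obstacle to be the algebraic identification $J = I(p)$ in the second paragraph: this is the point where the local analytic picture from the implicit function theorem has to be matched with the algebraic object $V_p$, and it relies both on convergent-power-series rings being integral domains and on catenariness of $\bK[X]$, as well as on characteristic zero entering the construction through separability.
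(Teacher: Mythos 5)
Your argument is essentially correct, but note the comparison situation first: this lemma is not proved in the present paper at all — it is quoted from \cite{JMN} — so there is no in-paper proof to measure yours against; it must be judged on its own. On its own it is a sound, self-contained reconstruction of the classical argument. The core of Part~1 (choose a transcendence basis $p_1,\dots,p_t$, use separability and the implicit function theorem to build an analytic graph $\Phi(U)$ through $p$, then prove $\Phi(U)\subseteq V_p$ by showing that the kernel $J$ of the germ homomorphism and $I(p)$ are primes of the same height $n-t$ in $\bK[X_1,\dots,X_n]$, hence equal) addresses exactly the crux of the lemma: there is no a priori reason an arbitrary $\bK$-polynomial vanishing at $p$ should vanish on the constructed manifold, and the equal-height argument is what supplies this. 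This is also why a purely Krull-dimension argument would not suffice here: the real zero set of a prime can have strictly smaller dimension than its coordinate ring (e.g.\ $x^2+y^2$), so some genuinely real-analytic input like your manifold construction is needed. Part~2 (intersect with $V(h)$, apply Part~1 to $W'$, and derive a contradiction from $\dim W'<\dim W$ using primality of $I_\bR(W)$ and the identity $\dim V=\dim \bR[X]/I_\bR(V)$) is likewise correct, with the small observation that $I_\bR(W)\subsetneq I_\bR(W')$ is strict because algebraic sets are the zero sets of their full vanishing ideals.

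One step deserves more care, given the paper's topological definition of dimension (the largest $t$ for which the set has an \emph{open} subset homeomorphic to $\bR^t$): the final inference ``$\Phi(U)\subseteq V_p$, hence $\dim W\geq \dim V_p\geq t$.'' Your $\Phi(U)$ is an embedded $t$-manifold inside $V_p$, but it need not be open in $V_p$, so $\dim V_p\geq t$ is not immediate from that definition; likewise monotonicity of $\dim$ under inclusion is a theorem rather than a triviality here. Both facts are standard and available in \cite{BCR} once the topological definition is identified with the semialgebraic (cell-decomposition) dimension, and your construction can be made to feed into them cleanly: shrink $U$ to the product neighbourhoods furnished by the implicit function theorem, so that by its uniqueness clause each graph $\{(x,y):x\in U,\ y=g_j(x)\}$ equals $\{(x,y)\in U\times I_j: f_j(x,y)=0\}$ for an interval $I_j$ around $p_j$. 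Then $\Phi(U)$ is semialgebraic of dimension $t$, monotonicity of semialgebraic dimension gives $\dim V_p\geq t$, and the same shrinking disposes of the minor point that $\phi(h)=0$ only gives vanishing of $h\circ\Phi$ near the base point rather than on all of $U$. With that patch your proof is complete.
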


Note that, if $(G,p)$ is a generic framework on $\F$, then Lemma \ref{cor:realvar} implies that $p$ is a generic point of the irreducible algebraic set $\F_1\times \F_2\times \ldots \times S_n$ defined over $\bQ(r)$ in $\bR^{3n}$.

A set $A\subseteq \bR^n$ is a {\em semi-algebraic set defined over $\bK$} if it
can be expressed as a finite union of sets of the form
$$\{x\in \bR^n : \mbox{$P_i(x)=0$ for $1\leq i\leq s$  and $Q_j(x)>0$ for $1\leq j\leq t$} \},$$
where $P_i,Q_j\in \bK[X_1,\ldots,X_n]$ for $1\leq i\leq s$ and
$1\leq j\leq t$. It is easy to see that the family of semi-algebraic
sets defined over $\bK$ is closed under union and intersection. A
deeper result is that if $A\subseteq \bR^n$ is a semi-algebraic set
defined over $\bK$ and $f:A\to \bR^m$ is a map in which each
coordinate is a polynomial with coefficients in $\bK$, then $f(A)$
is a {semi-algebraic set defined over $\bK$}. Another result we
shall need is that a semi-algebraic set $A$ can be partitioned into
a finite number of semi-algebraic subsets $C_1,C_2,\ldots C_t$,
called {\em cells}, such that, for all $1\leq i\leq t$, $C_i$ is
diffeomorphic to $\bR^{m_i}$ for some integer $m_i\geq 0$ (where
$\bR^0$ is taken to be a single point). The {\em dimension} of $A$
is the largest integer $t$ for which $A$ has an open subset
homeomorphic to $\bR^t$.
The \emph{Zariski closure},  $A^*$, of $A$  is the smallest algebraic set defined over $\bR$  which contains $A$.
It is known that $A^*$ is an algebraic set defined over $\bL$, for some  finite field extension  $\bL$ of $\bK$, and that
$\dim A=\dim A^*$. We refer the reader to \cite{BPR,BCR} for more information on semi-algebraic sets.

We can now obtain our desired extension of Proposition \ref{prop:Bobs3.3}.

\begin{prop}\label{prop:bob3.3}
Let $\bK$ be a field with $\bQ\subseteq \bK \subseteq \bR$, $W\subset \bR^a$ be
an irreducible algebraic set defined
over $\bK$ of dimension $n$,
and $f:W\to \bR^b$ be a function where each coordinate is a
polynomial with coefficients in $\bK$. Suppose that the maximum rank of the differential of $f$ is $m$, and that
$p_0\in W$ with $\td[\bK(p_0):\bK]=n$. Then $\rank df|_{p_0}=m$. Furthermore, if
$q_0\in W$ and $f(p_0)=f(q_0)$, then there exist
open neighbourhoods $N_{p_0}$ of $p_0$ and $N_{q_0}$ of $q_0$ in $W$ and a
diffeomorphism $g:N_{q_0}\to N_{p_0}$ such that $g(q_0)=p_0$ and,  for all $q\in
N_{q_0}$, $f(g(q))=f(q)$.
\end{prop}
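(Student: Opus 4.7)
The plan is to first establish $\rank df|_{p_0} = m$ via a genericity argument, then to build submersion-normal-form local coordinates at both $p_0$ and $q_0$, and finally to assemble $g$ from these.

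To see that $\rank df|_{p_0} = m$, observe that the locus $\Sigma = \{x \in W : \rank df|_x < m\}$ is cut out inside $W$ by the simultaneous vanishing of all the $m \times m$ minors of the Jacobian matrix of $f$. Each such minor is a polynomial in $\bK[X_1,\ldots,X_a]$ because $f$ has coefficients in $\bK$. Since the maximum rank on $W$ is $m$, at least one such minor $h$ is not identically zero on $W$. The hypothesis $\td[\bK(p_0):\bK] = n = \dim W$, together with the second part of Lemma \ref{cor:realvar}, tells us that $p_0$ is a generic point of $W$ over $\bK$, so any polynomial in $\bK[X]$ vanishing at $p_0$ vanishes identically on $W$. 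Applying the contrapositive to $h$ gives $h(p_0) \neq 0$ and hence $p_0 \notin \Sigma$. An entirely analogous argument, applied to the polynomial minors of the Jacobian of the defining equations of $W$, shows that $p_0$ lies in the smooth locus of $W$, so some neighborhood $U_0$ of $p_0$ in $W$ is a smooth $n$-manifold. The submersion theorem then yields, after shrinking $U_0$, a diffeomorphism $\Phi_0 : U_0 \to V_0 \subseteq \bR^n$ with $\Phi_0(p_0) = 0$, and a diffeomorphism $\Psi$ of a neighborhood of $f(p_0)$ in $\bR^b$ onto an open set around the origin, such that $\Psi \circ f \circ \Phi_0^{-1}$ is the restriction of the standard projection $\bR^n \to \bR^m \subseteq \bR^b$. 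In particular $f(U_0)$ is a smooth $m$-manifold in $\bR^b$.

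The key remaining step is to produce an analogous local picture at $q_0$. Because $W$ is irreducible of dimension $n$ and the maximal rank of $df$ is $m$, the image $f(W)$ is a semi-algebraic set of dimension exactly $m$; by the previous paragraph it coincides near $f(p_0)$ with the smooth $m$-manifold $f(U_0)$. Now fix a cell decomposition of $W$ (available from the theory of semi-algebraic sets, see \cite{BPR,BCR}) that is compatible with both the singular locus of $W$ and the rank strata of $df$. The point $p_0$ lies in a cell of dimension $n$ on which $W$ is smooth and $\rank df = m$; a dimension count using the fact that $f(W)$ is a smooth $m$-manifold near $f(p_0) = f(q_0)$ forces the cell containing $q_0$ also to have dimension $n$, to lie in the smooth locus of $W$, and to satisfy $\rank df = m$ throughout. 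The submersion theorem applied at $q_0$, with the same target coordinates $\Psi$, then yields a second diffeomorphism $\Phi_1 : U_1 \to V_1 \subseteq \bR^n$ from a neighborhood $U_1$ of $q_0$ in $W$ with $\Phi_1(q_0) = 0$, such that $\Psi \circ f \circ \Phi_1^{-1}$ is the same projection.

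With these two normal forms in hand, the diffeomorphism $g$ is assembled essentially as in the proof of Lemma \ref{lem:2man}: define $g = \Phi_0^{-1} \circ \Phi_1$ on a small enough neighborhood $N_{q_0}$ of $q_0$ in $W$ so that $g(N_{q_0}) = N_{p_0} \subseteq U_0$. Then $g(q_0) = p_0$ and $f(g(q)) = \Psi^{-1}(\pi(\Phi_0(g(q)))) = \Psi^{-1}(\pi(\Phi_1(q))) = f(q)$ for all $q \in N_{q_0}$, as required. I expect the main obstacle to be the semi-algebraic dimension count in the previous paragraph: one must rule out the possibility that $q_0$ lies in a lower-dimensional cell or in a cell with strictly smaller $df$-rank whose $f$-image could nevertheless cover the neighborhood of $f(p_0)$ inside $f(W)$, and this is where the irreducibility of $W$ and the precise interplay between $f$, its image, and the cell decomposition are genuinely needed.
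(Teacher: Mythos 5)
Your opening step is fine: genericity of $p_0$ (via Lemma \ref{cor:realvar}) does force $\rank df|_{p_0}=m$ and smoothness of $W$ at $p_0$, and this matches what the paper extracts from genericity. The genuine gap is everything concerning $q_0$, and it occurs exactly where you suspected. First, your assertion that ``$f(W)$ coincides near $f(p_0)$ with the smooth $m$-manifold $f(U_0)$'' is not justified by the preceding paragraph and is false in general: points of $W$ far from $p_0$ (for instance those near $q_0$) can map into every neighbourhood of $f(p_0)$, so near $f(p_0)$ the set $f(W)$ may consist of several $m$-dimensional sheets (or worse), and deciding whether the sheet produced by a neighbourhood of $q_0$ agrees with $f(U_0)$ is essentially the statement to be proved. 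Second, the ``dimension count'' that is supposed to force the cell containing $q_0$ to be $n$-dimensional, smooth, and of full $df$-rank is never carried out, and the mechanism you appeal to is the wrong one: a lower-dimensional cell, or a cell on which $\rank df<m$, does not need to cover a neighbourhood of $f(p_0)$ inside $f(W)$ in order to cause trouble --- it only needs to contain the single fibre point $q_0$. What actually rules this out is genericity of the \emph{value} $f(p_0)$: since $\td[\bK(p_0):\bK]=n$ and $\rank df|_{p_0}=m$, one has $\td[\bK(f(p_0)):\bK]=m$ (this is \cite[Lemma 3.1]{JJZ}), and therefore $f(p_0)$ cannot lie in any semi-algebraic set defined over $\bK$ (or over a finite extension) of dimension less than $m$, by Lemma \ref{cor:realvar} applied to Zariski closures. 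In particular $f(p_0)$ avoids the image of the singular locus of $W$ and of the locus where the rank of $df$ drops; this is how the paper (quoting \cite[Lemma 2.7 and Proposition 2.32]{GHT}) concludes that $f(p_0)$ is a \emph{regular value}, so that $W$ is smooth at $q_0$ and $\rank df|_{q_0}=m$. Nothing in your proposal substitutes for this argument.

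The same transcendence-degree input is needed a second time, and is also missing: even once you have constant-rank normal forms at both $p_0$ and $q_0$, you cannot take ``the same target coordinates $\Psi$'' at $q_0$ unless you already know that the local images $f(U_0)$ and $f(U_1)$ agree as germs at $f(p_0)$, which is again the unproved coincidence above. The paper resolves this by noting that both local images are semi-algebraic over $\bK$, so their intersection $T$ is too; taking a cell decomposition of $T$, the cell $C_1$ containing $f(p_0)$ has dimension at least $\td[\bK(f(p_0)):\bK]=m$, hence exactly $m$, hence is open in each of the two $m$-manifold images, and one restricts both neighbourhoods to $f^{-1}(C_1)$ before gluing with Lemma \ref{lem:2man}. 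Your final composition $g=\Phi_0^{-1}\circ\Phi_1$ is the same gluing as the paper's, but without the regular-value step and the $C_1$ argument the two normal forms need not be compatible, so the proof does not close as written.
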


\begin{proof}
We first show that $\rank df|_{p_0}=m=\rank df|_{q_0}$, and that there exist
open neighbourhoods $M_{p_0}$ of $p_0$ and $M_{q_0}$ of $q_0$ in $W$ such that
$f(M_{p_0})=f(M_{q_0})$ and $f(M_{p_0})$ is diffeomorphic to $\bR^m$. We then complete the proof by applying Lemma \ref{lem:2man}.

By Lemma \ref{cor:realvar}, $p_0$ is a generic point of $W$. We can
now use \cite[Lemma 2.7 and Proposition 2.32]{GHT}
to deduce that $f(p_0)$ is a regular value of $f$. In particular, we have $\rank df|_{p_0}=m=\rank df|_{q_0}$.
The Constant Rank Theorem (see, for example, \cite[Theorem 9]{Spi}) now implies that we can choose
disjoint open balls $B(p_1,\epsilon)$ and $B(q_1,\delta)$ in $\bR^{a}$ such that:
$p_0\in B(p_1,\epsilon)\cap W$; $q_0\in B(q_1,\delta)\cap W$; $p_1,q_1\in \bQ^a$; $\epsilon,\delta\in \bQ$;
both $B(p_1,\epsilon)\cap W$ and $B(q_1,\delta)\cap W$ are diffeomorphic to $\bR^n$; both $f(B(p_1,\epsilon)\cap W)$ and $f(B(q_1,\delta)\cap W)$ are diffeomorphic to $\bR^m$.

Let $U_{p_0}=B(p_1,\epsilon)\cap W$ and
$U_{q_0}=B(q_1,\epsilon)\cap W$. Since $U_{p_0}$ and $U_{q_0}$ are
both semi-algebraic defined over $\bK$, $f(U_{p_0})$ and
$f(U_{q_0})$ are both semi-algebraic defined over $\bK$, and hence
$T=f(U_{p_0})\cap f(U_{q_0})$ is semi-algebraic defined over $\bK$.
The facts that $f$ is a polynomial map, $td[\bK(p_0):\bK]=n$ and
$\rank df|_{p_0}=m\leq n$ imply that $td[\bK(f(p_0)):\bK]=m$, see
for example \cite[Lemma 3.1]{JJZ}. Let $C_1,C_2,\ldots,C_t$ be a
cell decomposition of $T$ with $f(p_0)\in C_1$, and let $C_1^*$ be
the Zariski closure of $C_1$. Then $C_1^*$ is an algebraic set
defined over some finite field extension $\bL$ of $\bK$. Since
$f(p_0)\in C_1^*$, Lemma \ref{cor:realvar} gives
$$\dim C_1=\dim C_1^*\geq td[\bL(f(p_0)):\bL]=td[\bK(f(p_0)):\bK]=m.$$
Since $C_1\subseteq f(U_{p_0})$ and
$f(U_{p_0})$ is diffeomorphic to $\bR^m$, we must have $\dim C_1=m$.
We can now take $M_{p_0}=f^{-1}(C_1)\cap U_{p_0}$ and $M_{q_0}=f^{-1}(C_1)\cap U_{q_0}$.
 Then $f(M_{p_0})=C_1=f(M_{q_0})$ and $C_1$ is diffeomorphic to $\bR^m$.

The proposition now follows from
Lemma \ref{lem:2man} by choosing $M_1=M_{p_0}$, $M_2=M_{q_0}$, and $f_i=f|_{M_i}$ for $i=1,2$.
\end{proof}


\noindent {\em Proof of Theorem \ref{thm:genstressPartial}}
Let $F=F^{G,\F}$, $\W=S_1\times S_2\times\ldots S_n$ and put
$f=F|_{\W}$. By Proposition \ref{prop:bob3.3} there exist open
neighbourhoods $N_{p_0}$ of $p_0$ and $N_{p_0}$ of $q_0$ in $\W$ and
a diffeomorphism $g:N_{q_0}\to N_{p_0}$ such that $g(q_0)=p_0$ and,
for all $q\in N_{q_0}$, $f(g( q))=f(q)$. Taking differentials at
$q_0$ we obtain
$df_{q_0}(q)=df_{p_0}(dg_{q_0}(q))$ for all $q$ in the tangent space
$T\W_{q_0}$. Since the Jacobian matrix of $F$ evaluated at $p$ is $2R_{\F}(G,p)$
and $df_{p}(x)=dF_{p}(x)$ for all $p\in \W$ and all $x\in T\W_{p}$,
we can rewrite this equation
as $R_{\F}(G,q_0)\,q=R_{\F}(G,p_0)\,dg_{q_0}(q)$.  Thus
$(\omega,\lambda)\, R_{\F}(G,q_0)\,q=(\omega,\lambda)\,
R_{\F}(G,p_0)\,dg_{q_0}(q)$. Since $(\omega,\lambda)$ is an
equilibrium stress for $(G,p_0)$ we have $(\omega,\lambda)\,
R_{\F}(G,q_0)\,q=(\omega,\lambda)\, R_{\F}(G,p_0)\,dg_{q_0}(q)= 0\,
dg_{q_0}(q)=0$ for all $q\in T\W_{q_0}$. Hence $(\omega,\lambda)\,
R_{\F}(G,q_0)\in T\W_{q_0}^\perp$, the orthogonal complement of $T\W_{q_0}$ in $\mathbb{R}^{3n}$. Since a vector
$x\in \bR^{3n}$ belongs to $T\W_{q_0}^\perp$ if and only if $x=\delta
S(G,q_0)$ for some $\delta\in \bR^n$, we have $(\omega,\lambda)\,
R_{\F}(G,q_0)=(0,\delta)R_{\F}(G,q_0)$. Therefore $(\omega,\lambda')$ is an equilibrium stress of $(G,q_0)$ for $\lambda'=\lambda-\delta$.\qed

\end{document}